\numberwithin{equation}{section}
\def\frk{\frak}               
\def\Phi{{\frk n}}
\def\Phi{{\frk N}}
\def\opn#1#2{\def#1{\operatorname{#2}}} 
\opn\chara{char} \opn\length{\ell} \opn\pd{pd} \opn\rk{rk}
\opn\projdim{proj\,dim} \opn\injdim{inj\,dim} \opn\rank{rank}
\opn\depth{depth} \opn\grade{grade} \opn\height{height}
\opn\embdim{emb\,dim} \opn\codim{codim}
\opn\Tr{Tr} \opn\bigrank{big\,rank}
\opn\superheight{superheight}\opn\lcm{lcm}
\opn\trdeg{tr\,deg}
\opn\reg{reg} \opn\lreg{lreg} \opn\ini{in} \opn\lpd{lpd}
\opn\size{size}\opn\bigsize{bigsize}
\opn\cosize{cosize}\opn\bigcosize{bigcosize}
\opn\sdepth{sdepth}\opn\sreg{sreg}
\opn\link{link}\opn\fdepth{fdepth}
\opn\index{index}
\opn\index{index}
\opn\indeg{indeg}
\opn\N{N}
\opn\SSC{SSC}
\opn\SC{SC}
\opn\lk{lk}
\opn\div{div} \opn\Div{Div} \opn\cl{cl} \opn\Cl{Cl}
\opn\Spec{Spec} \opn\Supp{Supp} \opn\supp{supp} \opn\Sing{Sing}
\opn\Ass{Ass} \opn\Min{Min}\opn\Mon{Mon} \opn\dstab{dstab} \opn\astab{astab}
\opn\Syz{Syz}
\opn\reg{reg}
\opn\Ann{Ann} \opn\Rad{Rad} \opn\Soc{Soc}
\opn\Im{Im} \opn\Ker{Ker} \opn\Coker{Coker} \opn\Am{Am}
\opn\Hom{Hom} \opn\Tor{Tor} \opn\Ext{Ext} \opn\End{End}\opn\Der{Der}
\opn\Aut{Aut} \opn\id{id}
\opn\nat{nat}
\opn\pff{pf}
\opn\Pf{Pf} \opn\GL{GL} \opn\SL{SL} \opn\mod{mod} \opn\ord{ord}
\opn\Gin{Gin} \opn\Hilb{Hilb}\opn\sort{sort}
\opn\initial{init}
\opn\ende{end}
\opn\height{height}
\opn\type{type}
\opn\aff{aff} \opn\con{conv} \opn\relint{relint} \opn\st{st}
\opn\lk{lk} \opn\cn{cn} \opn\core{core} \opn\vol{vol}
\opn\link{link} \opn\Link{Link}\opn\lex{lex}
\opn\gr{gr}
\def\pot#1#2{#1[\kern-0.28ex[#2]\kern-0.28ex]}
\opn\dirlim{\underrightarrow{\lim}}
\opn\inivlim{\underleftarrow{\lim}}
\def\Implies{\ifmmode\Longrightarrow \else
        \unskip${}\Longrightarrow{}$\ignorespaces\fi}
\def\implies{\ifmmode\Rightarrow \else
        \unskip${}\Rightarrow{}$\ignorespaces\fi}
\def\iff{\ifmmode\Longleftrightarrow \else
        \unskip${}\Longleftrightarrow{}$\ignorespaces\fi}
\newtheorem{Theorem}{Theorem}[section]
 \newtheorem{Lemma}[Theorem]{Lemma}
 \newtheorem{Corollary}[Theorem]{Corollary}
 \newtheorem{Proposition}[Theorem]{Proposition}
 \newtheorem{Remark}[Theorem]{Remark}
 \newtheorem{Example}[Theorem]{Example}
 \newtheorem{Definition}[Theorem]{Definition}
\let\epsilon\varepsilon
\let\kappa=\varkappa
\def\qed{\ifhmode\textqed\fi
      \ifmmode\ifinner\quad\qedsymbol\else\dispqed\fi\fi}
\def\textqed{\unskip\nobreak\penalty50
       \hskip2em\hbox{}\nobreak\hfil\qedsymbol
       \parfillskip=0pt \finalhyphendemerits=0}
\def\dispqed{\rlap{\qquad\qedsymbol}}
\opn\dis{dis}
\def\pnt{{\raise0.5mm\hbox{\large\bf.}}}
\opn\Lex{Lex}
\begin{document}

\title{Geometric regularity of  powers of two-dimensional squarefree monomial ideals}
\author{ Dancheng Lu}

\address{Dancheng Lu, School  of Mathematical Sciences, Soochow University, 215006 Suzhou, P.R.China}
\email{ludancheng@suda.edu.cn}

\keywords{Two-dimensional squarefree monomial ideal , Local cohomology,  Geometric regularity }

\subjclass[2010]{Primary 13D45; Secondary 13C99.}

\begin{abstract} Let $I$ be a two-dimensional squarefree monomial ideal of a polynomial ring $S$. We evaluate the geometric regularity, $a_i$-invariants of $S/I^n$ for $i\geq 2$. It turns out that they are all linear functions in $n$ from $n=2$. Also it
is shown that  $\mbox{g-reg}(S/I^n)=\reg(S/I^{(n)})$ for all $n\geq 1$.
\end{abstract}

\maketitle

\section*{Introduction}

Let $S:=K[x_1,\ldots,x_r]$ be the polynomial ring in variables $x_1,\ldots,x_r$ over a field $K$ and $\mathfrak{m}$ the maximal homogeneous ideal of $S$. Let $M$ be a finitely generated graded $S$-module. For each $0\leq i\leq \dim M$, the $a_i$-invariant of $M$ is defined by
$$a_i(M):=\max\{t: H_{\mathfrak{m}}^i(M)_t\neq 0\},$$
where  $H_{\mathfrak{m}}^i(M)$ is the $i$-th local cohomology module of $M$ with support in $\mathfrak{m}$, and we understand $\max \emptyset=-\infty$. The regularity  of $M$ is defined by $$\reg (M):=\max\{a_i(M)+i: 0\leq i\leq \dim M\}.$$

Let $I$ be a homogeneous ideal of the polynomial ring $S$. It was proved  that  $\reg (S/I^n)$ is a linear function in $n$  for $n\gg 0$, see \cite{CHT, K,TW}. In other words, there exists integers $d,e$ and $n_0$ such that $\reg  (S/I^n)=dn+e$ for all $n\geq n_0$. Based on this result, many authors have studied properties of the regularity of powers of homogeneous ideals. Roughly speaking, their researches fall into two classes. The one  is devoted to understanding  the nature of integers  $d,e$ and $n_0$ for  some special or general  ideals $I$, see e.g. \cite{Ch,EU,HT}. The other  is to computing explicitly or to bounding the regularity function $\reg(S/I^n)$ for some special classes of ideals $I$, see e.g. \cite{BBH,BHT,JNS}.

Let $\Delta$ be a simplicial complex on $[r]:=\{1,2,\ldots,r\}$. The Stanley-Reisner ideal of $\Delta$ is defined to be the ideal  of $S$ $$I_{\Delta}:=(\mathbf{x}_F: F \mbox{ is a minimal non-face of  }  \Delta),$$ where $\mathbf{x}_F$ is the squarefree monomial $\prod_{i\in F}x_i$.
  Every two-dimensional squarefree monomial ideal containing no variables is the Stanley-Reisner ideal of a simplicial complex of dimension one. Note that a simplicial complex of dimension one  can be regarded as a simple graph that may contains  isolated vertices.   Two-dimensional squarefree monomial ideals attract many authors' interests.
For example, the  Buchsbaum property  of  symbolic powers and ordinary powers  of these ideals  was  studied in \cite{MN1} and \cite{MN2}, respectively, and  the Cohen-Macaulayness of  symbolic powers and ordinary powers of such ideals was  characterized in terms of the properties of their associated graphs in \cite{MT}. Recently, the regularity of symbolic powers of such ideals was computed explicitly in \cite{HT2}.

 Inspired by the  regularity for sheaves on projective spaces,  M.E.~Rossi et al. introduced the following weaker but natural notion of regularity  in \cite{RTV}.
\begin{Definition} \em Let $M$ be a finitely generated graded $S$-module. The {\it geometric regularity} of $M$ is defined by $$\mbox{g-reg}(M):=\max\{a_i(M)+i: i>0\}.$$
\end{Definition}

Let $I$ be an arbitrary two-dimensional squarefree monomial ideal  of $S$.
In this paper, we  evaluate   the geometric regularity of $S/I^n$,  see Theorem~\ref{4.1},  and then  obtain  the equality $\mbox{g-reg}(S/I^n)=\reg(S/I^{(n)})$ for all $n\geq 1$.

 The paper is structured as follows. In Section 1, we  recall some concepts and results which we need in this paper. In Section 2, we consider
 the   question: if $I$ is a monomial ideal in $S$ and $J:=(I, yx_1, \cdots, yx_r)$ is the ideal of $R:=S[y]$, could we compare the regularity or $a_i$-invariants of $S/I^n$ with the ones of $R/J^n$? This question is inspired by the following observation: if $G$ is a simple graph on $[r]$ and $G'$ is the graph obtaining from $G$ by adding an isolated vertex $r+1$, then $I_{G'}=(I_G, x_{r+1}x_1,x_{r+1}x_2,\ldots,x_{r+1}x_r )$. With $S,I,R$ and $J$ defined as above, we prove  among other things that $a_1(R/J^n)=\max\{2n-2, a_1(S/I^{n-t})+t: 0\leq t\leq n-1\}$ if $\sqrt I\neq (x_1,\ldots,x_n)$, and  $a_i(R/J^n)=\max\{a_i(S/I^{n-t})+t: 0\leq t\leq n-1\}$ for $i\geq 2$, see Theorem~\ref{2.12}.

In Section 3, we compute   $a_i(S/I_G^n)$ for $i=1,2$ when $G$ is a simple graph without any isolated vertices. We find that $a_2(S/I_G^n)=a_2(S/I_G^{(n)})$ holds for all such graphs $G$ but $a_1(S/I_G^n)$ and $a_1(S/I_G^{(n)})$ may be very different.  Since $a_1(S/I_G^n)$ has been computed we can clarify all graphs $G$ and all $n>0$ for which $S/I_G^n$ is Cohen-Macaulay. This recovers two results of \cite{MT}. In the final section, by applying the afore-mentioned result obtained in Section 2, we can get  the values of  $\mbox{g-reg} (S/I_G^n)$ for any graph $G$  which may contain isolated vertices and for all $n\geq 1$. We conclude this paper by showing  $$\mbox{g-reg}(S/I^n)=\reg(S/I^{(n)})$$ for all two-dimensional squarefree monomial ideals $I$.

\section{Preliminaries}

In this section, we fix notation and recall some concepts and results which will be used in this paper. Throughout this paper,    we let $[r]:=\{1,2,\ldots,r\}$ and $S:=K[x_1,\ldots,x_r]$, the polynomial ring in variables $x_1,\ldots,x_r$ over a field $K$.

\subsection{Geometric Regularity}  We refer to \cite{BS} for the knowledge of local cohomology. It is known that $H_{\mathfrak{m}}^0(M/H_{\mathfrak{m}}^0(M))=0$ and $H_{\mathfrak{m}}^i(M)=H_{\mathfrak{m}}^i(M/H_{\mathfrak{m}}^0(M))$ for all $i>0$, see e.g. \cite[Chapter 2]{BS}. In particular, we have
$$\mbox{g-reg}(M)=\mathrm{reg}(M/H_{\mathfrak{m}}^0(M)).$$

Let $I,J$ be graded ideals of $S$. We set as usual $$I:J:=\{x\in S: xJ\subseteq I\}$$
 and  $$I:J^{\infty}:=\bigcup_{i\geq 1}I:J^i.$$
The ideal $I:\mathfrak{m}^{\infty}$ is called the {\it saturation} of $I$.
 Note that $H_{\mathfrak{m}}^0(S/I)=(I:\mathfrak{m}^{\infty})/I$,  we obtain  $$\mbox{g-reg}(S/I)=\mathrm{reg}(S/I:\mathfrak{m}^{\infty}).$$

\subsection{Simplicial Complex} Recall from \cite{HH} that a {\it simplicial complex} $\Delta$ on $[r]$ is a collection of subsets of $[r]$ such that $\{i\}\in \Delta$  for each $i\in [r]$ and that if $\sigma\in \Delta$ and $\tau\subseteq \sigma$ then $\tau\in \Delta$. The elements  $F\in \Delta$ are called  {\it faces} of $\Delta$, and the {\it dimension} of each face $F\in \Delta$ is defined by $\dim F=|F|-1$, where $|F|$ is the  cardinality of $A$.  Also the {\it dimension} of $\Delta$, $\dim \Delta$, is given by $\max\{\dim F: F\in \Delta\}$. Hence, a simplicial complex of dimension one is a simple graph that may contain some isolated vertices. A {\em facet} is a maximal face of $\Delta$ (with respect to inclusion). Let $\mathcal{F}(\Delta)$ denote the set of $\Delta$. It is clear that $\mathcal{F}(\Delta)$ governs $\Delta$. When $\mathcal{F}(\Delta)=\{F_1,\cdots,F_k\}$ , we write $\Delta=\langle F_1,\ldots,F_k\rangle$.

A {\it  non-face} of $\Delta$ is a subset $F$ of $[r]$ with $F\notin \Delta$ and let $\mathcal{N}(\Delta)$ denote the set of minimal non-face of $\Delta$. For any subset $F$ of $[r]$, we set $$\mathbf{x}_{F}:=\prod_{i\in F}x_i.$$
The Stanley-Reisner ideal of $\Delta$ is the ideal $I_{\Delta}$ which is generated by $\mathbf{x}_{F}$ with $F\notin \Delta$. In other words, $$I_{\Delta}:=(x_F: F\in \mathcal{N}(\Delta)).$$   By \cite[Lemma 1.5.4]{HH}, $I_{\Delta}$ has the following primary decomposition
$$I_{\Delta}=\bigcap_{F\in \mathcal{F}(\Delta)} P_{\overline{F}},$$ where $P_{\overline{F}}:=(x_i: i\in [r]\setminus F).$ From this decomposition, we see that $$\mathrm{Krull-dim} (S/I_{\Delta})=\dim \Delta+1.$$   Thus, if $G$ is a simple graph, then the Stanley-Reisner ideal $I_G$ is a two-dimensional squarefree monomial ideal of $S$. Conversely, any two-dimensional squarefree monomial ideal of $S$  containing no variables arises in this fashion.

Recall from \cite{HH} that the {\it augmented oriented chain complex} $\widetilde{\mathcal{C}}(\Delta; K)$ of $\Delta$ with respect to $K$ is defined as follows. A $K-$basis of $\widetilde{\mathcal{C}}_j(\Delta; K)$ is given by $\mathbf{e}_F$ with $\dim F=j$ and $F\in \Delta$.  For $F=\{i_0<i_1<\cdots<i_j\}$ one denotes the element $\mathbf{e}_F$ by $[i_0,i_1,\ldots,i_j]$.  With this notation, the chain map $\partial: \widetilde{\mathcal{C}}_j(\Delta; K)\rightarrow \widetilde{\mathcal{C}}_{j-1} (\Delta; K)$ is given by $$\partial([i_0,i_1,\ldots,i_j])=\sum_{k=0}^j(-1)^k[i_0,\ldots, i_{k-1},i_{k+1},\ldots,i_j].$$
The simplicial homology  $\widetilde{H}_j(\Delta;K)$ is then  defined as the $j$-th homology group of the complex $\widetilde{C}(\Delta;K)$.  That is,  $$\widetilde{H}_j(\Delta;K):=H_j(\widetilde{\mathcal{C}}(\Delta; K)).$$
We collect some easy facts on simplicial homology we need in the following lemmas. Recall that a simplicial complex is a {\it simplex} if it has a unique facet.
\begin{Lemma} \label{sh1} {\em (1)} $\widetilde{H}_{-1}(\Delta;K)\neq 0$ if and only if $\Delta=\{\emptyset\}$.

{\em (2)} If $\Delta$ is a simplex, then $\widetilde{H}_i(\Delta;K)=\widetilde{H}_i(\emptyset;K)=0$ for all $i\in \mathbb{Z}$.

{\em (3)} $\widetilde{H}_0(\Delta;K)\neq 0$ if and only if $\Delta$ is disconnected.

{\em (4)} Let $\Delta$ be a simplicial complex on $[r]$ such that  $\Delta\neq \emptyset$ and  $\Delta\neq \{\emptyset\}$. Then $\widetilde{H}_i(\Delta_1;K)=\widetilde{H}_i(\Delta;K)$ for $i\neq 0$ and $\dim_K\widetilde{H}_0(\Delta_1;K)=\dim_K\widetilde{H}_0(\Delta;K)+1$, where $\Delta_1:=\Delta\cup \{r+1\}$ is the simplicial complex on $[r+1]$.

\end{Lemma}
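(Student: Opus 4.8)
The plan is to argue directly from the augmented oriented chain complex $\widetilde{\mathcal{C}}(\Delta;K)$, using the two structural facts that $\widetilde{\mathcal{C}}_{-2}(\Delta;K)=0$ and that $\widetilde{\mathcal{C}}_{-1}(\Delta;K)$ is spanned by the single generator $\mathbf{e}_{\emptyset}$ when $\emptyset\in\Delta$ and is $0$ when $\Delta=\emptyset$. For (1), since $\widetilde{\mathcal{C}}_{-2}=0$ we have $\widetilde{H}_{-1}(\Delta;K)=\widetilde{\mathcal{C}}_{-1}(\Delta;K)/\operatorname{im}\partial_0$, where $\partial_0$ sends every vertex $[v]$ to $\mathbf{e}_{\emptyset}$. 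If $\Delta=\{\emptyset\}$ there is no vertex, so $\operatorname{im}\partial_0=0$ and $\widetilde{H}_{-1}=K\ne 0$; if $\Delta$ has a vertex then $\partial_0$ is onto and $\widetilde{H}_{-1}=0$; and if $\Delta=\emptyset$ the whole complex vanishes. These three mutually exclusive cases give the stated equivalence.

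For (2), write $\Delta=\langle F\rangle$ with $F\ne\emptyset$ and fix $v\in F$; then $\Delta$ is a cone with apex $v$, and I would exhibit the standard contracting homotopy $h\colon\widetilde{\mathcal{C}}_j(\Delta;K)\to\widetilde{\mathcal{C}}_{j+1}(\Delta;K)$ given by $h(\mathbf{e}_G)=0$ if $v\in G$ and $h(\mathbf{e}_G)=\pm\,\mathbf{e}_{G\cup\{v\}}$ (with the appropriate sign) otherwise. A routine sign bookkeeping gives $\partial h+h\partial=\operatorname{id}$, hence $\widetilde{H}_i(\Delta;K)=0$ for all $i$; the void complex $\emptyset$ has all chain groups $0$, so its homology vanishes trivially as well.

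For (3), assume first that $\Delta$ has a vertex. The augmentation $\partial_0\colon\widetilde{\mathcal{C}}_0\to\widetilde{\mathcal{C}}_{-1}=K$ is then surjective, so $\widetilde{H}_0(\Delta;K)=\ker\partial_0/\operatorname{im}\partial_1$ has $K$-dimension one less than that of $\widetilde{\mathcal{C}}_0/\operatorname{im}\partial_1$, which equals the number $c$ of connected components of $\Delta$; thus $\widetilde{H}_0(\Delta;K)\ne 0$ precisely when $c\ge 2$, i.e.\ when $\Delta$ is disconnected. The remaining cases $\Delta\in\{\emptyset,\{\emptyset\}\}$ are checked directly. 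For (4), the hypotheses $\Delta\ne\emptyset$, $\Delta\ne\{\emptyset\}$ say exactly that $\Delta$ (and hence $\Delta_1$) has a vertex, so (1) gives $\widetilde{H}_{-1}(\Delta;K)=\widetilde{H}_{-1}(\Delta_1;K)=0$. The inclusion of complexes $\widetilde{\mathcal{C}}(\Delta;K)\hookrightarrow\widetilde{\mathcal{C}}(\Delta_1;K)$ has quotient the complex $Q$ concentrated in homological degree $0$ with $Q_0=K$ spanned by the class of $[r+1]$, so $H_0(Q)=K$ and $H_j(Q)=0$ for $j\ne 0$. The long exact homology sequence then yields $\widetilde{H}_i(\Delta;K)\cong\widetilde{H}_i(\Delta_1;K)$ for $i\ne 0$, while at the bottom the connecting map $H_0(Q)\to\widetilde{H}_{-1}(\Delta;K)$ is zero (its target vanishes), leaving the exact sequence $0\to\widetilde{H}_0(\Delta;K)\to\widetilde{H}_0(\Delta_1;K)\to K\to 0$; hence $\dim_K\widetilde{H}_0(\Delta_1;K)=\dim_K\widetilde{H}_0(\Delta;K)+1$.

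There is no genuine obstacle here: each part is a standard fact about reduced simplicial homology, and the only thing to be careful about is the bookkeeping around the two degenerate complexes $\emptyset$ and $\{\emptyset\}$ and the precise role of the augmentation term $\widetilde{\mathcal{C}}_{-1}$---in each part the stated hypotheses are precisely what is needed to exclude the bad cases before applying the standard machinery.
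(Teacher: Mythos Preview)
Your proof is correct. The paper itself gives essentially no argument: it says that (1), (3), (4) are ``immediate from the definition'' and for (2) simply cites \cite[Example~5.1.9]{HH}. Your write-up fills in exactly what such a sentence would mean, working directly with the augmented chain complex; the only minor difference is that for (4) you invoke the long exact sequence of the pair $\Delta\subset\Delta_1$, whereas the one-line justification the paper presumably has in mind is the observation that $\Delta_1$ differs from $\Delta$ only by a single $0$-dimensional face, so the chain groups agree in degrees $\neq 0$ and the number of connected components increases by one. Both routes are equally valid and equally short.
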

\begin{proof} The statements (1), (3) and (4) are immediate from the definition. For (2), one may see e.g. \cite[Example 5.1.9]{HH}.
\end{proof}

The following is a copy of  \cite[Lemma 1.6]{HT2}.

\begin{Lemma} \label{sh2}

Let $G$ be a simple graph considered as a simplicial complex of dimension one. Then $\widetilde{H}_1(G;K)=0$ if and only if $G$ contains no cycles.
\end{Lemma}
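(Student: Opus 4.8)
The plan is to identify $\widetilde{H}_1(G;K)$ with the cycle space of $G$ and then read off the answer from the circuit-rank count. Since $\dim G=1$, the augmented oriented chain complex reduces to
$$0\To \widetilde{\mathcal{C}}_1(G;K)\stackrel{\partial_1}{\To}\widetilde{\mathcal{C}}_0(G;K)\stackrel{\partial_0}{\To}\widetilde{\mathcal{C}}_{-1}(G;K)\To 0,$$
with no chain modules in degree $\geq 2$; hence there are no $1$-boundaries and $\widetilde{H}_1(G;K)=\Ker\partial_1$. Here $\widetilde{\mathcal{C}}_1$ has $K$-basis indexed by the edges of $G$, $\widetilde{\mathcal{C}}_0$ has $K$-basis $\{[i]:i\in[r]\}$, and $\partial_1([i,j])=[j]-[i]$ for an edge $\{i<j\}$.

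I would first dispose of the ``only if'' direction contrapositively. Suppose $G$ contains a cycle, with vertex sequence $i_0,i_1,\ldots,i_{k-1},i_0$ and edges $e_t=\{i_t,i_{t+1}\}$ (subscripts read modulo $k$). Put $z:=\sum_{t=0}^{k-1}\varepsilon_t\,\mathbf{e}_{e_t}$, where $\varepsilon_t=\pm1$ is chosen according to whether $e_t$, written in the basis convention $[a,b]$ with $a<b$, is traversed along or against the cycle orientation. Then $z\neq 0$, and in $\partial_1 z$ each cycle vertex $i_t$ occurs exactly twice, once with coefficient $+1$ and once with $-1$, so $\partial_1 z=0$. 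Hence $\widetilde{H}_1(G;K)=\Ker\partial_1\neq 0$.

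For the ``if'' direction, assume $G$ has no cycles, i.e.\ $G$ is a forest, and show $\partial_1$ is injective by induction on the number of edges. If $G$ has no edges this is clear. Otherwise a finite forest with at least one edge has a vertex $v$ of degree one, incident to a unique edge $e_0$. If $z=\sum_e c_e\mathbf{e}_e\in\Ker\partial_1$, then the only basis vector whose boundary involves $[v]$ is $\mathbf{e}_{e_0}$, so the $[v]$-coefficient of $\partial_1 z$ is $\pm c_{e_0}$; thus $c_{e_0}=0$. Deleting $e_0$ yields a forest with fewer edges on which $z$ still lies in $\Ker\partial_1$, and induction gives $c_e=0$ for all $e$. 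Therefore $\widetilde{H}_1(G;K)=\Ker\partial_1=0$.

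Alternatively, and perhaps more cleanly, one can combine $\widetilde{H}_1(G;K)=\Ker\partial_1$ with the standard fact that the incidence operator $\partial_1$ of a graph has rank $|V(G)|-c$, where $c$ is the number of connected components of $G$ (equivalently, with the quantitative form $\dim_K\widetilde{H}_0(G;K)=c-1$ of Lemma~\ref{sh1}(3) together with $\rank\partial_0=1$). This yields $\dim_K\widetilde{H}_1(G;K)=|E(G)|-|V(G)|+c$, which vanishes precisely when $G$ is a forest, i.e.\ precisely when $G$ contains no cycles. Either way, I do not anticipate a genuine obstacle; the only points needing a little care are the sign bookkeeping in the cycle generator $z$ (or, on the second route, quoting the precise rank of $\partial_1$), together with the elementary graph-theoretic equivalence that a graph is acyclic if and only if $|E|=|V|-c$.
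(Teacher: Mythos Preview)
Your argument is correct: since there are no $2$-chains you have $\widetilde{H}_1(G;K)=\Ker\partial_1$, and both the explicit cycle construction and the leaf-deletion induction (or the rank-nullity variant) go through without difficulty.

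As for comparison with the paper: there is essentially nothing to compare, because the paper does not prove this lemma at all. It simply quotes it verbatim from \cite[Lemma~1.6]{HT2} and moves on. Your write-up therefore supplies a self-contained proof where the paper only gives a citation; either of your two routes would be acceptable, with the rank-nullity version being the shortest once one is willing to invoke the standard fact $\rank\partial_1=|V(G)|-c$.
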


\subsection{Takayama's Formula}

Let $I$ be a monomial ideal of the polynomial ring $S$ and $\mathfrak{m}$ the maximal homogeneous ideal of $S$. For any $\mathbf{a}=(a_1,\ldots,a_r)\in \mathbb{Z}^r$, we  put $G_{\mathbf{a}}=\{i\in [r]:a_i<0\}$.  Let $\Delta(I)$ denote the simplicial complex of all $F\subseteq [r]$ such that $x_F\notin \sqrt I$. The famous Takayama's formula  \cite{T} can be stated as follows.

 \begin{Lemma} \label{Takayama} $\dim_K  H^{i}_{\mathfrak{m}}(S/I)_{\mathbf{a}}$=$\left\{
                                                                        \begin{array}{ll}
                                                                         \dim_K \widetilde{H}_{i-|G_{\mathbf{a}}|-1}(\Delta_{\mathbf{a}}(I);K) , & \hbox{$G_{\mathbf{a}}\in \Delta(I)$;} \\
                                                                          0, & \hbox{otherwise.}
                                                                        \end{array}
                                                                      \right.$

 \end{Lemma}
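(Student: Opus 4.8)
The plan is to compute $H_{\mathfrak{m}}^i(S/I)$ from the $\mathbb{Z}^r$-graded \v{C}ech (stable Koszul) complex on $x_1,\dots,x_r$,
$$\check{C}^\bullet(S/I)\colon\quad 0\to S/I\to\bigoplus_{i}(S/I)_{x_i}\to\bigoplus_{i<j}(S/I)_{x_ix_j}\to\cdots\to(S/I)_{x_1\cdots x_r}\to 0,$$
whose term in cohomological degree $p$ is $\bigoplus_{|T|=p}(S/I)_{x_T}$ with $T$ running over the subsets of $[r]$, and then to read off each multigraded strand. Since every localization $(S/I)_{x_T}$ is naturally $\mathbb{Z}^r$-graded and $H_{\mathfrak{m}}^i(S/I)=H^i(\check{C}^\bullet(S/I))$, the computation reduces to finding, for a fixed $\mathbf{a}\in\mathbb{Z}^r$, the cohomology of the complex of $K$-vector spaces $\check{C}^\bullet(S/I)_{\mathbf{a}}$. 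Throughout I will use that $F\subseteq[r]$ lies in $\Delta_{\mathbf{a}}(I)$ exactly when $F\cap G_{\mathbf{a}}=\emptyset$ and \hchst{F\cup G_{\mathbf{a}}}; downward closedness, hence the fact that $\Delta_{\mathbf{a}}(I)$ is a simplicial complex (possibly void), is immediate from this description.

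The first step is to determine when a term is nonzero in degree $\mathbf{a}$. For $T\subseteq[r]$ a $K$-basis of $(S_{x_T})_{\mathbf{a}}$ is $\{x^{\mathbf{a}}\}$ if $a_i\ge 0$ for all $i\notin T$, i.e. if $G_{\mathbf{a}}\subseteq T$, and is empty otherwise; and when $G_{\mathbf{a}}\subseteq T$ the image of $x^{\mathbf{a}}$ in $(S/I)_{x_T}$ vanishes precisely when $x^{\mathbf{a}}x_T^N\in I$ for $N\gg 0$, that is, when some $u\in G(I)$ satisfies $\nu_i(u)\le a_i$ for all $i\notin T$. Writing $T=G_{\mathbf{a}}\sqcup F$ this yields
$$\dim_K(S/I)_{x_T,\mathbf{a}}=\begin{cases}1,& G_{\mathbf{a}}\subseteq T\ \text{and}\ F\in\Delta_{\mathbf{a}}(I),\\ 0,&\text{otherwise.}\end{cases}$$
In particular, if $G_{\mathbf{a}}\notin\Delta(I)$, i.e. $x_{G_{\mathbf{a}}}\in\sqrt I$, then choosing $u\in G(I)$ that divides a power of $x_{G_{\mathbf{a}}}$ forces $(S/I)_{x_T,\mathbf{a}}=0$ for every $T$, so the whole strand vanishes and $H_{\mathfrak{m}}^i(S/I)_{\mathbf{a}}=0$ for all $i$; this is the second case of the formula.

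Assuming now $G_{\mathbf{a}}\in\Delta(I)$ and writing $g=|G_{\mathbf{a}}|$, the next step is to identify $\check{C}^\bullet(S/I)_{\mathbf{a}}$ with a shift of the augmented chain complex of $\Delta_{\mathbf{a}}(I)$. By the first step, in $\check{C}^\bullet(S/I)_{\mathbf{a}}$ the term in cohomological degree $p$ has a canonical $K$-basis indexed by the faces $F\in\Delta_{\mathbf{a}}(I)$ with $\dim F=p-g-1$; fixing the natural order on $[r]$ as orientation, one checks that the \v{C}ech differential sends the basis vector attached to $F$ to $\sum\pm(\text{basis vector attached to }F\cup\{q\})$, the sum taken over $q$ with $F\cup\{q\}\in\Delta_{\mathbf{a}}(I)$ and with the usual \v{C}ech signs. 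Thus $\check{C}^\bullet(S/I)_{\mathbf{a}}$ is, up to a shift of degree by $g+1$, the augmented oriented cochain complex $\Hom_K(\widetilde{\mathcal{C}}_\bullet(\Delta_{\mathbf{a}}(I);K),K)$. Since $K$ is a field the $k$-th cohomology of the latter has dimension $\dim_K\widetilde{H}_k(\Delta_{\mathbf{a}}(I);K)$, so passing to cohomology gives $\dim_K H_{\mathfrak{m}}^i(S/I)_{\mathbf{a}}=\dim_K\widetilde{H}_{i-g-1}(\Delta_{\mathbf{a}}(I);K)$, which is the first case. The degenerate possibilities, in which $\Delta_{\mathbf{a}}(I)$ is void or equal to $\{\emptyset\}$, are covered by the conventions on reduced homology recorded in Lemma~\ref{sh1}.

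I expect the sign bookkeeping in the last step to be the main obstacle: one must verify that the localization (``insert an index'') maps of the \v{C}ech complex, restricted to a single multidegree, really are the transpose of the simplicial boundary operator of $\widetilde{\mathcal{C}}_\bullet(\Delta_{\mathbf{a}}(I);K)$, and one must keep straight the difference between the void complex and $\{\emptyset\}$ so that the homological shift by $g+1$ puts the $(-1)$-st chain group into the correct cohomological slot. The translation in the first step between ``$x^{\mathbf{a}}\notin I_{x_T}$'' and ``$F\in\Delta_{\mathbf{a}}(I)$'' should be routine once the definition of $\Delta_{\mathbf{a}}(I)$ is unwound.
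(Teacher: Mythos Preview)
Your argument is correct and is precisely the standard \v{C}ech-complex proof of Takayama's formula. Note, however, that the paper does not give its own proof of this lemma: it is stated as a quotation of the result from \cite{T}, and the subsequent equivalent descriptions of $\Delta_{\mathbf{a}}(I)$ are recorded without derivation. So there is no ``paper's proof'' to compare against beyond the citation; your sketch is essentially the argument one would find in Takayama's original paper, and the places you flag as needing care (the sign check identifying the \v{C}ech differential with the simplicial coboundary, and the handling of the void complex versus $\{\emptyset\}$) are exactly the routine verifications required there.
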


 The simplicial complex $\Delta_{\mathbf{a}}(I)$ has several equivalent interpretations.
Suppose that $I$ is generated by monomials $(u_1,\ldots,u_g)$. Then
\begin{equation*}
\begin{aligned}
 \Delta_{\mathbf{a}}(I)  & =\{F\subseteq [r]\setminus G_{\mathbf{a}}:  \mathbf{x}^{\mathbf{a}}\notin IS_{F\cup G_{\mathbf{a}}}\}\\
&=\{F\subseteq [r]\setminus G_{\mathbf{a}}: \forall 1\leq j\leq g, \exists i\in [r]\setminus (F\cup G_{\mathbf{a}}) \mbox{ with } a_i<\deg_i (u_j)\}\\
&=\{F\subseteq [r]\setminus G_{\mathbf{a}}: \mathbf{x}^{\mathbf{a}}\mathbf{x}_{F\cup G_{\mathbf{a}}}^t\notin I, \forall t\geq 1\}.
\end{aligned}
\end{equation*}
Here, $S_{F\cup G_{\mathbf{a}}}:=S[x_i^{-1}:i\in F\cup G_{\mathbf{a}}]$, and $\deg_i(u)$ is defined to be $a_i$ if $u=x_1^{a_1}\cdots x_r^{a_r}$.

The concept of {\em  monomial localization} is introduced in \cite{HRV} as a simplification of the localization. Fix a subset $F\subseteq [r]$. Let $\pi_F: S\rightarrow K[x_i: i\in [r]\setminus F]$ be  the  $K$-algebra
 homomorphism extended by the map sending $x_i$ to $x_i$ for $i\in [r]\setminus F$ and $x_i$ to 1 for $i\in F$. The image of a monomial ideal $I$ of $S$ under the map $\pi_F$ is called the monomial localization of $I$ with respect to $F$, denoted by $I[F]$.
It is clear that if $I,J$ are monomial ideals of $S$, then $(IJ)[F]=I[F]J[F]$ and $(I\cap J)[F]=I[F]\cap J[F]$. Let $\mathbf{a}_{+}$ denote the non-negative part of a vector $\mathbf{a}$.   Under these notations we have the following description of $\Delta_{\mathbf{a}}(I)$.

\begin{Lemma}\label{local} In Lemma~\ref{Takayama},
 $\Delta_{\mathbf{a}}(I)=\{F\subseteq [r]\setminus G_{\mathbf{a}}:  \mathbf{x}^{\mathbf{a}_{+}}\notin I[F\cup G_{\mathbf{a}}]S\}.$
\end{Lemma}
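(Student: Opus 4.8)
The plan is to show the two descriptions of $\Delta_{\mathbf a}(I)$ coincide by unwinding the monomial-localization machinery. Recall from the third equivalent interpretation in the excerpt that $F\in\Delta_{\mathbf a}(I)$ (for $F\subseteq[r]\setminus G_{\mathbf a}$) if and only if $\mathbf x^{\mathbf a}\,\mathbf x_{F\cup G_{\mathbf a}}^{\,t}\notin I$ for all $t\ge 1$. First I would observe that, since $I$ is a monomial ideal and $\mathbf x^{\mathbf a}=\mathbf x^{\mathbf a_+}\cdot\mathbf x^{\mathbf a-\mathbf a_+}$ where the second factor is a monomial in the variables $x_i$ with $i\in G_{\mathbf a}$ (with strictly negative exponents, hence a genuine inverse), multiplying by a high power $\mathbf x_{F\cup G_{\mathbf a}}^{\,t}$ of the squarefree monomial on $F\cup G_{\mathbf a}$ has the effect of (i) clearing all the negative exponents coming from $G_{\mathbf a}$ once $t$ is large, and (ii) padding the exponents on $F$ arbitrarily high. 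So divisibility of $\mathbf x^{\mathbf a}\,\mathbf x_{F\cup G_{\mathbf a}}^{\,t}$ by some generator $u_j$ of $I$, for some $t$, depends only on whether the ``$F\cup G_{\mathbf a}$-truncated'' part of $\mathbf x^{\mathbf a_+}$ is divisible by the image $\pi_{F\cup G_{\mathbf a}}(u_j)$ in $K[x_i:i\in[r]\setminus(F\cup G_{\mathbf a})]$.

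The key step is to make (i)--(ii) precise as an equivalence: for $G:=F\cup G_{\mathbf a}$,
\[
\mathbf x^{\mathbf a}\,\mathbf x_G^{\,t}\in I \text{ for some } t\ge 1
\quad\Longleftrightarrow\quad
\pi_G(\mathbf x^{\mathbf a_+})\in \pi_G(I)=I[G].
\]
For the forward direction, if $\mathbf x^{\mathbf a}\,\mathbf x_G^{\,t}=u_j\cdot w$ for a monomial $w$ in $S$ (legitimate since after multiplying by $\mathbf x_G^{\,t}$ with $t\ge\max_i|a_i|$ the left side is an honest monomial of $S$), then applying $\pi_G$ and using $\pi_G(\mathbf x_G)=1$ gives $\pi_G(\mathbf x^{\mathbf a})=\pi_G(u_j)\pi_G(w)$; and $\pi_G(\mathbf x^{\mathbf a})=\pi_G(\mathbf x^{\mathbf a_+})$ because $\mathbf a$ and $\mathbf a_+$ differ only in coordinates belonging to $G_{\mathbf a}\subseteq G$, which $\pi_G$ kills. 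Hence $\pi_G(\mathbf x^{\mathbf a_+})\in I[G]$. Conversely, if $\pi_G(\mathbf x^{\mathbf a_+})=\pi_G(u_j)\,v$ for some monomial $v\in K[x_i:i\in[r]\setminus G]$, lift $v$ to the monomial $v\in S$ supported off $G$; then $\mathbf x^{\mathbf a_+}$ and $u_j\,v$ have the same exponents in all coordinates outside $G$, so $\mathbf x^{\mathbf a_+}\,\mathbf x_G^{\,s}$ is divisible by $u_j$ once $s$ is at least the exponent of each $x_i$, $i\in G$, in $u_j$; multiplying further by a power of $\mathbf x_G$ to absorb the $\mathbf a_+-\mathbf a$ discrepancy shows $\mathbf x^{\mathbf a}\,\mathbf x_G^{\,t}\in I$ for suitable $t$. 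Negating this equivalence and ranging over $j$ yields exactly $\Delta_{\mathbf a}(I)=\{F\subseteq[r]\setminus G_{\mathbf a}:\mathbf x^{\mathbf a_+}\notin I[F\cup G_{\mathbf a}]S\}$, after noting that $\mathbf x^{\mathbf a_+}\notin I[G]S$ (as an element of $S$, via the inclusion $K[x_i:i\notin G]\hookrightarrow S$) is the same as $\pi_G(\mathbf x^{\mathbf a_+})\notin I[G]$ since $I[G]S\cap K[x_i:i\notin G]=I[G]$ for a monomial ideal.

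I expect the main obstacle to be purely bookkeeping: keeping straight which coordinates are being inverted, set to $1$, or left alone, and checking that the ``for some $t$'' quantifier interacts correctly with the finitely many generators $u_1,\dots,u_g$ (one should take the maximum of the thresholds over all relevant $j$, but since only membership ``for some generator'' is at stake this causes no trouble). There is no deep ingredient here beyond the observation that monomial localization $\pi_F$ is exactly the substitution-by-$1$ map and that, for monomial ideals, testing membership of a monomial after multiplying by high powers of $\mathbf x_F$ is insensitive to the $F$-exponents — which is the content already encoded in the third line of the displayed chain of equalities for $\Delta_{\mathbf a}(I)$ in the excerpt. Once the equivalence in the previous paragraph is established, the lemma follows immediately.
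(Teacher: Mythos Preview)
Your argument is correct. You start from the third characterization of $\Delta_{\mathbf a}(I)$ (membership of $\mathbf x^{\mathbf a}\mathbf x_G^t$ in $I$ for all $t$) and unwind everything at the level of monomial generators, which works fine; the bookkeeping you flag is handled properly.

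The paper's own proof is a one-liner using the \emph{first} characterization instead: it simply invokes the identity $I[F\cup G_{\mathbf a}]S = IS_{F\cup G_{\mathbf a}}\cap S$. Since $\mathbf x^{\mathbf a}$ and $\mathbf x^{\mathbf a_+}$ differ by a unit of $S_{F\cup G_{\mathbf a}}$, one has $\mathbf x^{\mathbf a}\notin IS_{F\cup G_{\mathbf a}}$ iff $\mathbf x^{\mathbf a_+}\notin IS_{F\cup G_{\mathbf a}}\cap S = I[F\cup G_{\mathbf a}]S$. Your argument is in effect a direct verification of this identity for monomial ideals (your displayed equivalence is exactly the statement that a monomial of $S$ lies in $IS_G$ iff its image under $\pi_G$ lies in $I[G]$). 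So the two proofs have the same content; the paper packages it as a known fact about monomial localization, while you reprove it by hand. Your route is more self-contained; the paper's is shorter but presupposes familiarity with that contraction identity.
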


\begin{proof} This is because $I[F\cup G_{\mathbf{a}}]S=IS_{F\cup G_{\mathbf{a}}}\cap S$.\end{proof}

 We will use the above interpretation of $\Delta_{\mathbf{a}}(I)$ in this paper. Let $G\subseteq [r]$. Recall that $\mathrm{Link}_{\Delta}(G)$ is defined to be the subcomplex  $\{F\setminus G:G\subseteq F\in \Delta\}$.  The following result allows us to  consider only the case when $\mathbf{a}\in \mathbb{N}^r$.
\begin{Lemma} \label{transfer} Let  $\mathbf{a}=(a_1,\ldots, a_r)$ be a vector in $\mathbb{Z}^r$ with $G_{\mathbf{a}}\neq \emptyset$. Then $$\Delta_{\mathbf{a}}(I)=\mathrm{Link}_{\Delta_{\mathbf{a_+}}(I)}(G_{\mathbf{a}}).$$
\end{Lemma}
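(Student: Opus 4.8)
The plan is to deduce the equality directly from the explicit description of $\Delta_{\mathbf{a}}(I)$ given in Lemma~\ref{local}, by expanding each side as a collection of subsets of $[r]\setminus G_{\mathbf{a}}$ and comparing them.

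First I would record the two elementary features of the non-negative part $\mathbf{a}_+$, namely $G_{\mathbf{a}_+}=\emptyset$ and $(\mathbf{a}_+)_+=\mathbf{a}_+$. Substituting the vector $\mathbf{a}_+$ into Lemma~\ref{local} (which applies with no change when the negative set is empty) then yields $\Delta_{\mathbf{a}_+}(I)=\{F\subseteq [r]: \mathbf{x}^{\mathbf{a}_+}\notin I[F]S\}$. Next I would expand the link by its definition, $\mathrm{Link}_{\Delta_{\mathbf{a}_+}(I)}(G_{\mathbf{a}})=\{F\setminus G_{\mathbf{a}}: G_{\mathbf{a}}\subseteq F\in\Delta_{\mathbf{a}_+}(I)\}$, and write each face $F$ appearing here as $F=H\cup G_{\mathbf{a}}$ with $H\subseteq [r]\setminus G_{\mathbf{a}}$; this bijective re-indexing turns the link into $\{H\subseteq [r]\setminus G_{\mathbf{a}}: \mathbf{x}^{\mathbf{a}_+}\notin I[H\cup G_{\mathbf{a}}]S\}$. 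But Lemma~\ref{local}, applied now to $\mathbf{a}$ itself, says exactly that this last set equals $\Delta_{\mathbf{a}}(I)$, which finishes the argument; the degenerate case is automatically included, since if $G_{\mathbf{a}}\notin\Delta_{\mathbf{a}_+}(I)$ then both the link and $\Delta_{\mathbf{a}}(I)$ are empty.

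There is no real obstacle here: the statement is a bookkeeping consequence of Lemma~\ref{local}, and the only points needing attention are that Lemma~\ref{local} may indeed be invoked for $\mathbf{a}_+$ (whose negative set is empty) and the harmless substitution $F\leftrightarrow H\cup G_{\mathbf{a}}$ identifying the faces of the link with those subsets of $[r]\setminus G_{\mathbf{a}}$ that complete $G_{\mathbf{a}}$ to a face of $\Delta_{\mathbf{a}_+}(I)$. If one wishes to avoid monomial localization entirely, the same reasoning works starting from $\Delta_{\mathbf{a}}(I)=\{F\subseteq [r]\setminus G_{\mathbf{a}}: \mathbf{x}^{\mathbf{a}}\notin IS_{F\cup G_{\mathbf{a}}}\}$ and observing that $\mathbf{x}^{\mathbf{a}-\mathbf{a}_+}$ involves only the variables $x_i$ with $i\in G_{\mathbf{a}}$, hence is a unit in $S_{F\cup G_{\mathbf{a}}}$, so that $\mathbf{x}^{\mathbf{a}}\in IS_{F\cup G_{\mathbf{a}}}$ if and only if $\mathbf{x}^{\mathbf{a}_+}\in IS_{F\cup G_{\mathbf{a}}}$.
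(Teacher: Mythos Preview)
Your proof is correct and follows essentially the same route as the paper's own argument: both invoke Lemma~\ref{local} twice (once for $\mathbf{a}$ and once for $\mathbf{a}_+$) to reduce the claim to the equivalence $F\in\Delta_{\mathbf{a}}(I)\Longleftrightarrow F\cup G_{\mathbf{a}}\in\Delta_{\mathbf{a}_+}(I)$ for $F\subseteq[r]\setminus G_{\mathbf{a}}$, which is then immediate from the definition of the link. Your write-up is simply a more unpacked version of the paper's chain of equivalences.
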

\begin{proof}
If $F$ belongs to either $\Delta_{\mathbf{a}}(I)$ or $\mathrm{Link}_{\Delta_{\mathbf{a_+}}(I)}(G_{\mathbf{a}})$ then $F\subseteq [r]\setminus G_{\mathbf{a}}$. Now let $ F$ be a subset of $[r]\setminus G_{\mathbf{a}}$. Then
$F\in \Delta_{\mathbf{a}}(I)$ if and only if $\mathbf{x}^{\mathbf{a+}}\notin I[F\cup G_{\mathbf{a}}]S$ if and only if  $F\cup G_{\mathbf{a}}\in \Delta_{\mathbf{a}_+}(I)$ if and only if $F\in \mathrm{link}_{\Delta_{\mathbf{a}_+}}(G_{\mathbf{a}})$, as desired. \end{proof}

We close this section by giving a simplification of Takayama's formula in the case that $I$ is a power of a squarefree monomial ideal.

\begin{Lemma} \label{lu} Let $I$ be a squarefree monomial ideal of $S$. Then, for all $\mathbf{a}\in \mathbb{Z}^r$ and $n\geq 1$, we have   $\dim_K  H^{i}_{\mathfrak{m}}(S/I^n)_{\mathbf{a}}=\dim_K \widetilde{H}_{i-|G_{\mathbf{a}}|-1}(\Delta_{\mathbf{a}}(I^n);K).$
\end{Lemma}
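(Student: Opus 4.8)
The plan is to deduce the lemma directly from Takayama's formula (Lemma~\ref{Takayama}) applied to the ideal $I^{n}$. That formula already gives $\dim_{K}H^{i}_{\mathfrak{m}}(S/I^{n})_{\mathbf{a}}=\dim_{K}\widetilde{H}_{i-|G_{\mathbf{a}}|-1}(\Delta_{\mathbf{a}}(I^{n});K)$ whenever $G_{\mathbf{a}}\in\Delta(I^{n})$, and with the same homological shift $i-|G_{\mathbf{a}}|-1$ as in the claim, so there is nothing to reconcile on that side. Hence the whole content of the lemma is that the ``otherwise'' branch of Takayama's formula is harmless for powers of squarefree ideals: one only needs to check that when $G_{\mathbf{a}}\notin\Delta(I^{n})$ the right-hand side $\widetilde{H}_{i-|G_{\mathbf{a}}|-1}(\Delta_{\mathbf{a}}(I^{n});K)$ also vanishes, the left-hand side being $0$ in that case.

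So suppose $G_{\mathbf{a}}\notin\Delta(I^{n})$. First I would note that since $I$ is squarefree we have $\sqrt{I^{n}}=\sqrt{I}=I$, so $\Delta(I^{n})=\Delta(I)$ and the hypothesis translates to $x_{G_{\mathbf{a}}}\in I$, hence $x_{G_{\mathbf{a}}}^{n}\in I^{n}$. Next, for an arbitrary $F\subseteq[r]\setminus G_{\mathbf{a}}$, applying the $K$-algebra homomorphism $\pi_{F\cup G_{\mathbf{a}}}$ to $x_{G_{\mathbf{a}}}^{n}$ sends every variable occurring in it to $1$, so $1\in I^{n}[F\cup G_{\mathbf{a}}]$; thus $I^{n}[F\cup G_{\mathbf{a}}]$ is the whole polynomial ring in the variables $x_{i}$, $i\in[r]\setminus(F\cup G_{\mathbf{a}})$, and in particular $\mathbf{x}^{\mathbf{a}_{+}}\in I^{n}[F\cup G_{\mathbf{a}}]S$. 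By the monomial-localization description in Lemma~\ref{local}, this says $F\notin\Delta_{\mathbf{a}}(I^{n})$. Since $F$ was arbitrary, $\Delta_{\mathbf{a}}(I^{n})$ has no faces whatsoever; its augmented oriented chain complex is then the zero complex, so $\widetilde{H}_{j}(\Delta_{\mathbf{a}}(I^{n});K)=0$ for all $j$, which is what was required.

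I do not anticipate any genuine obstacle: once Takayama's formula and Lemma~\ref{local} are available the argument is purely formal. The only point deserving a word of care is the usual distinction between the void complex (no faces at all) and the complex $\{\emptyset\}$ having the single $(-1)$-dimensional face: one must check that even $\emptyset\notin\Delta_{\mathbf{a}}(I^{n})$, which is precisely the instance $F=\emptyset$ of the computation above (it gives $\mathbf{x}^{\mathbf{a}_{+}}\in I^{n}[G_{\mathbf{a}}]S$), so that $\widetilde{H}_{-1}$ vanishes too and no boundary value of the index $i-|G_{\mathbf{a}}|-1$ escapes the conclusion.
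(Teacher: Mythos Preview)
Your proof is correct and follows essentially the same approach as the paper: both reduce the claim, via Takayama's formula, to showing that $G_{\mathbf{a}}\notin\Delta(I^{n})$ forces $\Delta_{\mathbf{a}}(I^{n})=\emptyset$, and both establish this by using $\sqrt{I^{n}}=I$ to get $\mathbf{x}_{G_{\mathbf{a}}}\in I$ and hence $1\in I^{n}[F\cup G_{\mathbf{a}}]$ for every $F$. The only cosmetic difference is that the paper passes through $1\in I[G_{\mathbf{a}}]\Rightarrow 1\in I^{n}[G_{\mathbf{a}}]$ while you argue directly via $\mathbf{x}_{G_{\mathbf{a}}}^{n}\in I^{n}$, and the paper cites Lemma~\ref{sh1}(2) for the vanishing of $\widetilde{H}_{*}(\emptyset;K)$ rather than appealing to the zero chain complex.
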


\begin{proof} In view of Lemma~\ref{Takayama} as well as Lemma~\ref{sh1}.(2), it is enough to show that if $G_{\mathbf{a}}\notin \Delta(I^n)$ then $\Delta_{\mathbf{a}}(I^n)=\emptyset$.

 Assume now that $G_{\mathbf{a}}\notin \Delta(I^n)$. Then $\mathbf{x}_{G_{\mathbf{a}}}\in \sqrt {I^n}=I$ and so $1\in I[G_{\mathbf{a}}]$. From this  it follows that $1\in I^n[G_{\mathbf{a}}]$, and thus $\Delta_{\mathbf{a}}(I^n)=\emptyset$ by Lemma~\ref{local}, as required.
\end{proof}

\section{Some Comparisons}

In this section we always assume that $I$ is  a monomial ideal of $S$ with $I\neq S$ and  let $$J:=(I,x_1y,\ldots,x_ry)$$ be the ideal of $R:=S[y]=K[x_1,\ldots,x_r,y]$.  We fix $n\geq 1$ and  compare the regularity, g-regularity and   $a_i$-invariants of $S/I^n$ with the ones of $R/J^k$.

We begin with basic lemmas on the regularity of graded modules.

\begin{Lemma} \label{2.1} Let $0\rightarrow M\rightarrow N\rightarrow P\rightarrow 0$ be a short exact sequence of finitely generated graded $S$-modules. Then

{\em (1)} $\reg(N)\leq \max\{\reg(M),\reg(P)\}$;

{\em(2)} $\reg(M)\leq \max\{\reg(N),\reg(P)+1\}$;

{\em (3)} $\reg(P)\leq \max\{\reg(N),\reg(M)-1\}$;

{\em (4)}  $\reg(N)=\reg(P)$ if $\reg(M)\leq \reg(P).$
\end{Lemma}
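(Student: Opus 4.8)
The plan is to extract everything from the long exact sequence in local cohomology. Since $H^{\bullet}_{\mathfrak{m}}(-)$ is a functor into graded modules with degree-preserving connecting maps, applying it to $0\to M\to N\to P\to 0$ gives, for every $i\in\mathbb{Z}$ and every $t\in\mathbb{Z}$, the exact sequence of $K$-vector spaces
$$\cdots \to H^{i-1}_{\mathfrak{m}}(P)_t \to H^i_{\mathfrak{m}}(M)_t \to H^i_{\mathfrak{m}}(N)_t \to H^i_{\mathfrak{m}}(P)_t \to H^{i+1}_{\mathfrak{m}}(M)_t \to \cdots.$$
Here I use that $a_i(L)=\max\{t:H^i_{\mathfrak{m}}(L)_t\neq 0\}$ with $\max\emptyset=-\infty$, that $H^i_{\mathfrak{m}}(L)=0$ for $i<0$ and $i>\dim L$, so $\reg(L)=\max\{a_i(L)+i:i\in\mathbb{Z}\}$, together with the convention $a_i(L)=-\infty$ whenever $H^i_{\mathfrak{m}}(L)=0$.

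For (1), the middle three terms show that $H^i_{\mathfrak{m}}(N)_t\neq 0$ forces $H^i_{\mathfrak{m}}(M)_t\neq 0$ or $H^i_{\mathfrak{m}}(P)_t\neq 0$; hence $a_i(N)\le\max\{a_i(M),a_i(P)\}$, and adding $i$ and maximizing over $i$ yields $\reg(N)\le\max\{\reg(M),\reg(P)\}$. For (2), the terms $H^{i-1}_{\mathfrak{m}}(P)_t\to H^i_{\mathfrak{m}}(M)_t\to H^i_{\mathfrak{m}}(N)_t$ give $a_i(M)\le\max\{a_{i-1}(P),a_i(N)\}$, so
$$a_i(M)+i\le\max\{(a_{i-1}(P)+(i-1))+1,\ a_i(N)+i\}\le\max\{\reg(P)+1,\reg(N)\},$$
and maximizing over $i$ gives (2). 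For (3), the terms $H^i_{\mathfrak{m}}(N)_t\to H^i_{\mathfrak{m}}(P)_t\to H^{i+1}_{\mathfrak{m}}(M)_t$ give $a_i(P)\le\max\{a_i(N),a_{i+1}(M)\}$, so
$$a_i(P)+i\le\max\{a_i(N)+i,\ (a_{i+1}(M)+(i+1))-1\}\le\max\{\reg(N),\reg(M)-1\},$$
and maximizing over $i$ gives (3).

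Finally (4) combines (1) and (3). Assume $\reg(M)\le\reg(P)$. Then (1) gives $\reg(N)\le\max\{\reg(M),\reg(P)\}=\reg(P)$, while (3) gives $\reg(P)\le\max\{\reg(N),\reg(M)-1\}$. Since $\reg(M)-1<\reg(M)\le\reg(P)$, the right-hand maximum cannot be $\reg(M)-1$, so $\reg(P)\le\reg(N)$, whence $\reg(N)=\reg(P)$. (If $\reg(P)=-\infty$, then $P=0$, so $M\cong N$ and $\reg(N)=\reg(M)=-\infty=\reg(P)$ as well.)

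I expect no genuine obstacle: the argument is entirely formal. The only points requiring a little care are the index bookkeeping $a_{i-1}(P)+i=(a_{i-1}(P)+(i-1))+1$ in (2) and the analogous shift in (3), the handling of the $-\infty$ convention when a local cohomology module vanishes, and, for (4), the use of the strict inequality $\reg(M)-1<\reg(M)$ to discard the wrong branch of the maximum.
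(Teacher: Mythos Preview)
Your proof is correct and follows the same approach as the paper: the paper cites (1)--(3) as well-known (referring to \cite{CH}) and only proves (4) by combining (1) and (3) exactly as you do. Your explicit derivation of (1)--(3) from the graded long exact sequence in local cohomology is the standard argument behind that citation, so there is no substantive difference.
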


\begin{proof} The first three statements are well-known, see e.g. \cite{CH}.
           For the convenience of the readers, we give a proof of (4). Since $\reg(M)\leq \reg(P)$, we  obtain  $\reg(P)\leq \reg(N)$  by (3), and    $\reg(N)\leq \reg(P)$ by (1). Thus  $\reg(N)=\reg(P)$, as desired. \end{proof}

 Denote by  $\mathfrak{m}$ the maximal homogenous ideal $(x_1,\ldots,x_r)$ of $S$. The following lemma is   a special case of \cite[Theorem 2.2]{CH}, but we include a short proof for the completeness.

\begin{Lemma} \label{2.2} Let $M$ be a finitely generated graded $S$-module. Then $\reg (\mathfrak{m}M)\leq \reg (M)+1$.
\end{Lemma}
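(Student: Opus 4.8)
The plan is to exploit the short exact sequence
$$0\to \mathfrak{m}M\to M\to M/\mathfrak{m}M\to 0$$
together with part (2) of Lemma~\ref{2.1}. Applying that inequality to this sequence gives $\reg(\mathfrak{m}M)\le \max\{\reg(M),\reg(M/\mathfrak{m}M)+1\}$, so everything reduces to bounding $\reg(M/\mathfrak{m}M)$ by $\reg(M)$.

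The second step is to observe that $M/\mathfrak{m}M$ is a finite-dimensional graded $K$-vector space, hence isomorphic as a graded $S$-module to a finite direct sum $\bigoplus_j K(-a_j)$, where the $a_j$ are the degrees of a minimal homogeneous generating set of $M$ (counted with multiplicity). Since $\reg(K)=\reg(S/\mathfrak{m})=0$ — the minimal free resolution of $K$ over $S$ being the Koszul complex — we have $\reg(K(-a_j))=a_j$, and therefore $\reg(M/\mathfrak{m}M)=\max_j a_j=:t_0(M)$, the largest degree of a minimal generator of $M$.

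The third step is the standard fact $t_0(M)\le \reg(M)$: indeed $\beta_{0,t_0(M)}(M)\neq 0$, and from the description of regularity via graded Betti numbers, $\reg(M)\ge t_0(M)-0=t_0(M)$. Combining the three steps (and noting the case $M=0$ is trivial), we obtain $\reg(M/\mathfrak{m}M)+1=t_0(M)+1\le \reg(M)+1$, hence $\reg(\mathfrak{m}M)\le \max\{\reg(M),\reg(M)+1\}=\reg(M)+1$.

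There is essentially no serious obstacle here; the only point needing a little care is the identification $\reg(M/\mathfrak{m}M)=t_0(M)$ and the inequality $t_0(M)\le \reg(M)$, both immediate from the graded-Betti-number characterization of regularity. Alternatively, one may avoid Betti numbers entirely and argue with local cohomology: $M/\mathfrak{m}M$ has nonzero local cohomology only in cohomological degree $0$, concentrated in degrees $\le t_0(M)$, which gives $\reg(M/\mathfrak{m}M)=t_0(M)$ directly and then the same conclusion.
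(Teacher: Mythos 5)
Your proof is correct and follows essentially the same route as the paper: the short exact sequence $0\to \mathfrak{m}M\to M\to M/\mathfrak{m}M\to 0$ with Lemma~\ref{2.1}.(2), the identification of $\reg(M/\mathfrak{m}M)$ with the largest degree of a minimal generator of $M$, and the standard bound of that degree by $\reg(M)$. The only cosmetic difference is that the paper gets $\reg(M/\mathfrak{m}M)=a_0(M/\mathfrak{m}M)$ directly from $H^0_{\mathfrak{m}}(M/\mathfrak{m}M)=M/\mathfrak{m}M$, which is the same observation as your concluding alternative.
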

\begin{proof}  Since $H_{\mathfrak{m}}^0(M/\mathfrak{m}M)=M/\mathfrak{m}M$,  we have $\reg (M/\mathfrak{m}M)=a_0(M/\mathfrak{m}M)$ and it is the largest degree of minimal generators of $M$, which, by e.g. \cite[Theorem 15.3.1]{BS}, is less than or equal to $\reg(M)$. Thus, the desired  inequality follows from  the short exact sequence $0\rightarrow \mathfrak{m}M\rightarrow M\rightarrow M/\mathfrak{m}M\rightarrow 0$ together with Lemma~\ref{2.1}.(2).
 \end{proof}

\begin{Proposition} \label{2.3}  Assume    $I\subseteq \mathfrak{m}^2$. Then the following statements hold.

{\em (1)} If  $\reg(S/I^n)=dn+e$ $n\gg 0$ with $d\geq 3$,  then $\reg (R/J^n)=\reg (S/I^n)$ for all $n\gg 0$;

{\em (2)} Given any positive integer $\lambda$, if  $I^n$ has a linear resolution for all $n\leq \lambda$,  then $\reg(R/J^n)=\reg (S/I^n)$ for all  $n\leq \lambda$.

\end{Proposition}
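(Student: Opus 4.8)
The plan is to connect $R/J^{n}$ with $S/I^{n}$ through a chain of short exact sequences obtained by repeatedly dividing out $y$.

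Write $\mathfrak{m}=(x_{1},\dots,x_{r})$. The starting point is the $y$-graded structure of $J^{n}$. Since $J=I+y\mathfrak{m}$ one has $J^{n}=\sum_{j=0}^{n}y^{j}\mathfrak{m}^{j}I^{n-j}$ (with $\mathfrak{m}^{0}:=S$), and because $I\subseteq\mathfrak{m}$ forces $\mathfrak{m}^{j}I^{n-j}\subseteq\mathfrak{m}^{j+1}I^{n-j-1}$, the coefficient of $y^{j}$ in $J^{n}$ is exactly $\mathfrak{m}^{j}I^{n-j}$ for $j\le n$ and $\mathfrak{m}^{n}$ for $j\ge n$. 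From this I would read off the two ideal identities I need: with $J^{0}:=R$ and for $0\le k\le n-1$,
$$\mathfrak{m}^{k}J^{n-k}:y=\mathfrak{m}^{k+1}J^{n-k-1},\qquad R/(\mathfrak{m}^{k}J^{n-k}+yR)\cong S/\mathfrak{m}^{k}I^{n-k}.$$
The first is a $y$-degree-by-$y$-degree comparison; the second holds because reducing modulo $y$ sends $J$ to $I$. These give, for $0\le k\le n-1$, the short exact sequences
$$0\longrightarrow (R/\mathfrak{m}^{k+1}J^{n-k-1})(-1)\xrightarrow{\ \cdot y\ } R/\mathfrak{m}^{k}J^{n-k}\longrightarrow S/\mathfrak{m}^{k}I^{n-k}\longrightarrow 0,$$
whose $k=0$ case is $0\to (R/\mathfrak{m}J^{n-1})(-1)\to R/J^{n}\to S/I^{n}\to0$ and whose bottom ($k=n-1$) involves $\mathfrak{m}^{n}J^{0}=\mathfrak{m}^{n}R$, with $\reg_{R}(R/\mathfrak{m}^{n}R)=\reg_{S}(S/\mathfrak{m}^{n})=n-1$ (regularity is unchanged by $S\hookrightarrow R$ and by $R\twoheadrightarrow R/yR=S$).

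Now apply Lemma~\ref{2.1}(4) to the $k=0$ sequence: to conclude $\reg_{R}(R/J^{n})=\reg_{S}(S/I^{n})$ it is enough that $\reg_{R}(R/\mathfrak{m}J^{n-1})+1\le\reg_{S}(S/I^{n})$. Iterating Lemma~\ref{2.1}(1) along the chain yields
$$\reg_{R}(R/\mathfrak{m}J^{n-1})\le\max\Bigl(\{\,\reg_{S}(S/\mathfrak{m}^{k}I^{n-k})+k-1:1\le k\le n-1\,\}\cup\{\,2n-2\,\}\Bigr),$$
so it remains to verify $\reg_{S}(S/\mathfrak{m}^{k}I^{n-k})+k\le\reg_{S}(S/I^{n})$ for $1\le k\le n-1$, and $2n-1\le\reg_{S}(S/I^{n})$. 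Here I would stress the one non-formal point: one cannot short-circuit this by bounding $\reg_{R}(\mathfrak{m}J^{n-1})$ through Lemma~\ref{2.2} over $R$, since $\mathfrak{m}$ is not the maximal ideal of $R$ and $R/\mathfrak{m}J^{n-1}$ has no finite-length submodule, so the argument of Lemma~\ref{2.2} fails — peeling $y$ off one power at a time is precisely the substitute. Lemma~\ref{2.2}, iterated, does still apply over $S$ and gives the crude but sufficient estimate $\reg_{S}(S/\mathfrak{m}^{k}I^{n-k})\le\reg_{S}(I^{n-k})+k-1=\reg_{S}(S/I^{n-k})+k$.

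It then remains to dispose of the two cases by substitution. For (1), using $\reg_{S}(S/I^{m})=dm+e$ for $m\ge n_{0}$ and $d\ge3$: when $n-k\ge n_{0}$ we get $\reg_{S}(S/\mathfrak{m}^{k}I^{n-k})+k\le d(n-k)+e+2k=dn+e-(d-2)k<dn+e$; when $n-k<n_{0}$ the left side is $\le C+2(n-1)$ where $C$ bounds the finitely many values $\reg_{S}(S/I^{j})$, $1\le j<n_{0}$, hence $<dn+e$ for $n\gg0$; and $2n-1<dn+e$ for $n\gg0$. So $\reg_{R}(R/J^{n})=\reg_{S}(S/I^{n})$ for $n\gg0$. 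For (2), let $d$ be the common degree of a minimal generating set of $I$ (so $d\ge2$ because $I\subseteq\mathfrak{m}^{2}$). Since $I^{m}$ has a linear resolution for $m\le\lambda$, $\reg_{S}(S/I^{m})=md-1$; and since $\mathfrak{m}^{k}L$ is minimally generated in a single degree whenever $L$ is (which pins $\reg_{S}(\mathfrak{m}^{k}L)$ at $\reg_{S}(L)+k$), also $\reg_{S}(S/\mathfrak{m}^{k}I^{m-k})=(m-k)d+k-1$ for $1\le k\le m-1$. The required inequalities now read $\reg_{S}(S/\mathfrak{m}^{k}I^{m-k})+k=(m-k)d+2k-1\le md-1$ and $2m-1\le md-1$, each equivalent to $d\ge2$, which holds; the case $m=1$ is immediate since $\reg_{R}(R/\mathfrak{m}R)+1=1\le d-1$. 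Hence $\reg_{R}(R/J^{m})=\reg_{S}(S/I^{m})$ for all $m\le\lambda$. The main obstacle is the first step — getting the $y$-graded description of $J^{n}$ and the two ideal identities exactly right; everything afterwards is bookkeeping with Lemmas~\ref{2.1} and~\ref{2.2}.
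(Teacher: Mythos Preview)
Your proof is correct and follows essentially the same route as the paper. The paper uses a single short exact sequence $0\to (R/J^{n}{:}y)(-1)\to R/J^{n}\to R/(J^{n},y)\to 0$ and computes the $y$-graded direct-sum decomposition of $R/(J^{n}{:}y)$ in one stroke, while you unspool that same decomposition as a chain of $n$ short exact sequences; both arrive at exactly the same inequality $\max\{2n-1,\ \reg_{S}(S/\mathfrak{m}^{k}I^{n-k})+k:1\le k\le n-1\}\le\reg_{S}(S/I^{n})$ and then dispatch it with Lemma~\ref{2.2}. One small remark: your parenthetical ``which pins $\reg_{S}(\mathfrak{m}^{k}L)$ at $\reg_{S}(L)+k$'' is not true merely under the hypothesis that $L$ is generated in a single degree---it needs $L$ to have a linear resolution (so that $\deg L=\reg L$); in your application $L=I^{m-k}$ does, and in any case only the upper bound $\reg_{S}(S/\mathfrak{m}^{k}I^{m-k})\le\reg_{S}(S/I^{m-k})+k$ from Lemma~\ref{2.2} is actually needed, which you already stated.
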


\begin{proof}
We will use the following short exact sequence:
\begin{gather} \label{exact1}\tag{$\dag$}
 0\rightarrow (R/J^n:y)[-1]\rightarrow R/J^n\rightarrow R/(J^n,y)\rightarrow 0.
\end{gather}
For this, we  first look at the regularities of  $(R/J^n:y)[-1]$ and $R/(J^n,y)$.

  Note that $R=\bigoplus_{i\geq 0} S[y^i]$ and $J=(I,\mathfrak{m}y)$, it is not difficult to see $$J^n=I^n\oplus I^{n-1}\mathfrak{m}y\oplus \cdots\oplus I{\mathfrak{m}}^{n-1}y^{n-1}\oplus \oplus_{i\geq 0}{\mathfrak{m}}^ny^{n+i}.$$
      This implies $$J^n:y=I^{n-1}\mathfrak{m}\oplus I^{n-2}\mathfrak{m}^2y\oplus \cdots\oplus I\mathfrak{m}^{n-1}y^{n-2}\oplus \oplus_{i\geq 0}\mathfrak{m}^ny^{n-1+i}$$ and so $$R/J^n:y=S/I^{n-1}\mathfrak{m}\oplus (S/I^{n-2}\mathfrak{m}^2)y\oplus \cdots \oplus (S/I\mathfrak{m}^{n-1})y^{n-2} \oplus (S[y]/\mathfrak{m}^n)(y^{n-1}),$$ where the last equality follows from the equality $$(S/\mathfrak{m}^n)y^{n-1}\oplus (S/\mathfrak{m}^n)y^n\oplus \cdots=(S[y]/\mathfrak{m}^n)y^{n-1}.$$
From this it follows that
\begin{equation}\label{in}\tag{$\ddag$}
\begin{split}
 &\reg(R/J^n:y)[-1]=\reg(R/J^n:y)+1\\
& =\max\{\reg(S/I^{n-k}\mathfrak{m}^k)+k: 1\leq k\leq n-1,  \reg(S/\mathfrak{m}^{n})+n(=2n-1)\}.
\end{split}
\end{equation}
On the other hand, it is clear that $(J^n,y)=(I^n,y)$. Thus, $\reg R/(J^n,y)=\reg (S/I^n)$.

(1)  We may assume $\reg(S/I^n)=dn+e$ for all $n\geq n_0$.  Then, by Lemma~\ref{2.2},  one has $$\reg(S/I^{n-k}\mathfrak{m}^k)+k\leq\left\{
                                       \begin{array}{ll}
                                         d(n-k)+e+2k, & \hbox{if $n_0\leq n-k$;} \\
                                         t+2k, & \hbox{if $1\leq  n-k\leq n_0$.}
                                       \end{array}
                                     \right.,$$
  where $t:=\max\{\reg(S/I^n):n\leq n_0\}$. Let $n_0':=n_0+t$. Since $d\geq 3$, it follows that  $\reg(S/I^{n-k}\mathfrak{m}^k)+k\leq d(n-1)+e+2$ for all $1\leq k\leq n-1$ and for $n\geq n'_0$, and so $\reg(R/J^n:y)[-1]\leq d(n-1)+e+2$ for all $n\geq n_0'$ by (\ref{in}). This implies $$\reg(R/J^n:y)[-1]\leq \reg(R/(J^n,y))$$ for $n\gg 0.$
Now the afore-mentioned short exact sequence (\ref{exact1}) yields the desired equality in view of  Lemma~\ref{2.1}.(4).

(2) We may  assume that $I$ is generated in a single degree $d\geq 2$. Then $\reg(S/I^n)=dn-1$ and this implies $\reg(R/J^n:y)[-1]\leq d(n-1)+1$ for $n\leq \lambda$ by  (\ref{in}). From this it follows that $\reg(R/J^n:y)[-1]\leq \reg(R/(J^n,y))$ for $n\leq \lambda$, and then the desired equality follows by using the short exact sequence (\ref{exact1}) and by Lemma~\ref{2.1}.(4).
\end{proof}

\begin{Remark} {\em (1) From this proof, the condition that $I$ is a monomial ideal is not necessary in Proposition~\ref{2.3}. That is, $I$ could be any homogeneous ideal.

(2) \cite[Remark 5.7]{NV} shows that the condition that $d\geq 3$ in Proposition~\ref{2.3}.(1)  cannot be removed. But if assume further $I$ is a squarefree monomial ideal, then the condition that $d\geq 3$ could be dropped, as shown by \cite[Corollary 5.6]{NV}.  }

\end{Remark}
 Let  $\mathfrak{n}$ denote the maximal homogeneous ideal $(x_1,\ldots,x_r,y)$ of $R$.

\begin{Proposition} \label{2.4}  Suppose  $I\subseteq \mathfrak{m}^2$. Then {\em $\mbox{g-reg}(R/J^n)\geq n-1$ } for all $n\geq 1$.
\end{Proposition}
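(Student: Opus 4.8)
The plan is to exhibit a specific multidegree $\mathbf{a}\in\mathbb{Z}^r$ with a negative coordinate for which $H^1_{\mathfrak{m}}(R/J^n)_{\mathbf{a}}\neq 0$ and $|G_{\mathbf{a}}|=1$, so that Takayama's formula (Lemma~\ref{Takayama}, in the form of Lemma~\ref{lu}) reduces the question to showing that a certain simplicial complex $\Delta_{\mathbf{a}}(J^n)$ is disconnected, i.e. has nonvanishing reduced $0$-th homology by Lemma~\ref{sh1}.(3). Since we want to produce a cohomology class in degree $\geq n-1$, the natural choice is to take $\mathbf{a}$ supported on the new variable $y$ together with one of the $x$-variables: set $G_{\mathbf{a}}=\{i_0\}$ for some $i_0\in[r]$, put $a_{i_0}=-1$, assign $y$ a large positive exponent, and choose the remaining $x$-coordinates to be nonnegative and small. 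The total degree $|\mathbf{a}|$ will then be governed by the exponent on $y$, which we can push up to roughly $n-1$ while keeping the complex disconnected.

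First I would use the explicit description of $J^n$ established in the proof of Proposition~\ref{2.3}, namely
\[
J^n = I^n\oplus I^{n-1}\mathfrak{m}y\oplus\cdots\oplus I\mathfrak{m}^{n-1}y^{n-1}\oplus\bigoplus_{i\geq 0}\mathfrak{m}^n y^{n+i},
\]
to understand membership $\mathbf{x}^{\mathbf{a}}\mathbf{x}_{F}^t\in J^n$ for $F\subseteq[r+1]\setminus G_{\mathbf{a}}$. The key point is that in the graded piece of $R$ of $y$-degree exactly $n-1$, the relevant ideal is $I\mathfrak{m}^{n-1}$, whose radical is $\sqrt{I}\cap\mathfrak{m}=\mathfrak{m}$ (using $I\subseteq\mathfrak{m}^2\subseteq\mathfrak{m}$), so at $y$-degree $n-1$ the structure is essentially that of $S/\mathfrak{m}^{n-1}$-type behavior, which is where disconnectedness of the link comes from — exactly as in the computation $\reg(S/\mathfrak{m}^n)=n-1$ and the appearance of the term $2n-1$ in (\ref{in}).

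Concretely, I would take $\mathbf{a}$ with $y$-coordinate equal to $n-1$, one $x_{i_0}$-coordinate equal to $-1$, and all other coordinates $0$; then verify via Lemma~\ref{local} and Lemma~\ref{transfer} that $\Delta_{\mathbf{a}}(J^n)=\mathrm{Link}_{\Delta_{\mathbf{a}_+}(J^n)}(G_{\mathbf{a}})$, and compute this link directly from the above decomposition of $J^n$: I expect it to contain the vertex $\{y\}$ and at least one vertex $\{x_j\}$ as disconnected pieces (the monomial $y^{n-1}\mathbf{x}^{\mathbf{a}_+}$ is not in $J^n$ because $I\mathfrak{m}^{n-1}$ does not contain it, but multiplying by a suitable variable repeatedly can fail to enter $J^n$ along different "directions"). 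By Lemma~\ref{sh1}.(3) the reduced $0$-homology is nonzero, so by Lemma~\ref{lu}, $H^1_{\mathfrak{n}}(R/J^n)_{\mathbf{a}}\neq 0$ with $|\mathbf{a}|=n-2$ (total degree $= (n-1)-1$), hence $a_1(R/J^n)\geq n-2$ and $\mbox{g-reg}(R/J^n)\geq a_1(R/J^n)+1\geq n-1$.

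The main obstacle will be the bookkeeping in the last step: pinning down the correct coordinates of $\mathbf{a}$ so that simultaneously (i) $G_{\mathbf{a}}\in\Delta(J^n)$ (so the class is not killed outright), (ii) the link $\Delta_{\mathbf{a}}(J^n)$ is genuinely disconnected rather than merely nonempty, and (iii) the total degree comes out to exactly $n-1$ after the index shift $i-|G_{\mathbf{a}}|-1 = 0$ in Takayama's formula. This requires carefully tracking which $y$-graded component of $J^n$ controls membership and checking that the presence of both a "$y$-direction" and an "$x$-direction" in which the monomial stays outside $J^n$ forces two connected components. I would handle (ii) by checking that neither $\{y\}$ nor some $\{x_j\}$ ($j\neq i_0$, $j$ not in the support of a generator of $I$ in a convenient way, using $r\geq 1$ and $I\neq S$) can be connected to the other inside the link, since adding the edge $\{y,x_j\}$ would require $yx_j\cdot(\text{monomial})\in J^n$, and the $\mathfrak{m}y$-part of $J^n$ is only reached at $y$-degree $\geq 1$ with an extra factor from $\mathfrak{m}$, which the chosen exponents preclude.
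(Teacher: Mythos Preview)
There are two genuine gaps in your proposal.

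First, the Takayama index is miscomputed. With $i=1$ and $|G_{\mathbf{a}}|=1$ one has $i-|G_{\mathbf{a}}|-1=-1$, not $0$. Hence by Lemma~\ref{Takayama} you would need $\widetilde{H}_{-1}(\Delta_{\mathbf{a}}(J^n);K)\neq 0$, i.e.\ $\Delta_{\mathbf{a}}(J^n)=\{\emptyset\}$ (Lemma~\ref{sh1}(1)), not that it be disconnected. Showing that the link at $\{i_0\}$ is exactly $\{\emptyset\}$ is a much stronger requirement than disconnectedness, and for your choice of $\mathbf{a}$ it amounts to asking that $x_{i_0}x_j\in\sqrt{I}$ for every $j\neq i_0$, which has no reason to hold for an arbitrary $I\subseteq\mathfrak{m}^2$.

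Second, and more fundamentally, any Takayama-based argument with $G_{\mathbf{a}}=\{i_0\}\subseteq[r]$ requires $\{i_0\}\in\Delta(J^n)$, equivalently $x_{i_0}\notin\sqrt{I}$. This fails precisely when $\sqrt{I}=\mathfrak{m}$, for instance when $I=\mathfrak{m}^2$; yet by Example~\ref{Example} this is exactly the case where the bound $\mbox{g-reg}(R/J^n)=n-1$ is sharp. (Switching to $G_{\mathbf{a}}=\emptyset$ and looking for a disconnected $\Delta_{\mathbf{a}}(J^n)$ runs into the same obstruction: one connected component must contain some $\{j\}$ with $j\in[r]$, and $\{j\}\in\Delta_{\mathbf{a}}(J^n)$ again forces $x_j\notin\sqrt{I}$.) So your approach cannot cover all $I\subseteq\mathfrak{m}^2$.

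The paper's proof avoids Takayama entirely and is considerably shorter. It uses the identity $\mbox{g-reg}(R/J^n)=\reg(R/J^n:\mathfrak{n}^\infty)=\reg(J^n:\mathfrak{n}^\infty)-1$ and shows directly that every monomial $\mathbf{x}^{\mathbf{a}}y^t$ in the saturation $J^n:\mathfrak{n}^\infty$ satisfies $|\mathbf{a}|\geq n$: if $\mathbf{x}^{\mathbf{a}}y^{t+k}\in J^n$ for some $k$, write it as a product of $n$ minimal generators of $J$ times a monomial; since each generator of $J$ has $x$-degree at least $1$ (this is where $I\subseteq\mathfrak{m}^2$ enters), the $x$-degree of the product is at least $n$. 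Hence $J^n:\mathfrak{n}^\infty$ is generated in degree $\geq n$, and the regularity bound follows from \cite[Theorem 15.3.1]{BS}.
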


\begin{proof}  We claim that if $\mathbf{x}^{\mathbf{a}}y^t\in J^n: \mathfrak{n}^{\infty}$ for some $t\geq 0$, then $|\mathbf{a}|\geq n$.
Let $\mathbf{x}^{\mathbf{a}}y^t\in J^n: \mathfrak{n}^{\infty}$. Then there exists $k\geq 1$ such that $\mathbf{x}^{\mathbf{a}}y^{t+k}\in J^n$. It follows that $$\mathbf{x}^{\mathbf{a}}y^{t+k}=u_{i_1}u_{i_2}\cdots u_{i_n}v,$$ where $u_{i_j}$ are minimal generators of $J$ for $j=1,\ldots,n$ and $v$ is a monomial. Since $\deg_{\mathbf{x}}(u_{i_j})\geq 1$ for all $j$, we have  $|\mathbf{a}|=\deg_{\mathbf{x}}(\mathbf{x}^{\mathbf{a}}y^{t+k})\geq \deg_{\mathbf{x}}(u_{i_1})+\cdots+\deg_{\mathbf{x}}(u_{i_n})\geq n$, as claimed. Here  $\deg_{\mathbf{x}} (u)$ is defined to be the integer $|\mathbf{a}|$ if $u=\mathbf{x}^{\mathbf{a}}y^t$.

From this claim it follows that   $$\mbox{g-reg}(R/J^n)=\reg(R/J^n:\mathfrak{n}^{\infty})=\reg (J^n:\mathfrak{n}^{\infty})-1\geq n-1,$$ as desired. Here, the last inequality follows from \cite[Theorem 15.3.1]{BS}.
\end{proof}
\begin{Example} \label{Example} Let $I$ be the ideal $\mathfrak{m}^2$. Then

{\em (1)} $\reg(R/J^n)=\reg(S/I^n)=2n-1$;

{\em (2)} {\em $\mbox{g-reg}(R/J^n)=n-1$ and $\mbox{g-reg}(S/I^n)=-\infty$.}
\end{Example}
\begin{proof}
(1) It  follows from Proposition~\ref{2.3}.(2).

(2) We first show that $J^n:\mathfrak{n}^{\infty}=(x_1,\ldots,x_r)^n$. The inclusion has been proved in the proof of Proposition~\ref{2.4}. Let $u=\mathbf{x}^{\mathbf{a}}$ with $|\mathbf{a}|\geq n$. Then, for any $\mathbf{x}^{\mathbf{b}}y^t$ with $|\mathbf{b}|+t= n$, we may choose a vector $\mathbf{c}\in \mathbb{N}^r$ such that $|\mathbf{c}|=t$ and $\mathbf{c}\leq \mathbf{a+b}$. Since $\mathbf{x}^{\mathbf{a+b-c}}\in (x_1,\ldots,x_r)^{2n-2t}=I^{n-t}$ and $\mathbf{x^c}y^t\in (x_1y,\cdots,x_ry)^t$, it follows that  $u\mathbf{x}^{\mathbf{b}}y^t\in J^n$, and so $u\in J^n:\mathfrak{n}^{\infty}$, by noting that $\mathfrak{n}^n$ is generated by $\mathbf{x}^{\mathbf{b}}y^t$ with $|\mathbf{b}|+t=n$. Thus we have shown $J^n:\mathfrak{n}^{\infty}=(x_1,\ldots,x_r)^n$. Consequently, $$\mbox{g-reg}(R/J^n)=\reg(R/J^n:\mathfrak{n}^{\infty})=\reg (J^n:\mathfrak{n}^{\infty})-1= n-1,$$ for all $n\geq 1$.  Notice that  $I^n:\mathfrak{m}^{\infty}=S$, the last statement follows.
\end{proof}

This example suggests the relationship between $\mbox{g-reg}(R/J^n)$ and $\mbox{g-reg}(S/I^n)$ is more subtle. In the rest part of this section we  will express  the $a_i$-invariants   of $R/J^n$ in terms of the ones of $S/I^{n-t}$ with $0\leq t\leq n-1$ and for $i\geq 1$ by using Takayama's Lemma. For this, we need  some preparations   describing the relations between simplicial complexes $\Delta_{(\mathbf{a},t)}(J^n)$ and $\Delta_{\mathbf{a}}(I^n)$, where $\mathbf{a}$ is  a vector in $\mathbb{Z}^r$
and $t\in \mathbb{Z}$. Note that $(\mathbf{a},t)\in \mathbb{Z}^{r+1}$.

\begin{Proposition} \label{c2}

{\em (1)} Suppose  $t<0$. Then $\Delta_{(\mathbf{a},t)}(J^n)$ is either $\emptyset$ or $\{\emptyset\}$. Moreover, $\Delta_{(\mathbf{a},t)}(J^n)=\{\emptyset\}$ if and only if $|\mathbf{a}|\leq n-1$ and $G_{\mathbf{a}}=\emptyset$.
\vspace{2mm}

 {\em (2)} Suppose that $0 \leq t\leq n-1$ and  $F\subseteq [r]$.  If either $G_{\mathbf{a}}\neq \emptyset$ or $F\neq \emptyset$,  then  $F\in \Delta_{(\mathbf{a},t)}(J^n)\Longleftrightarrow F\in \Delta_{\mathbf{a}}(I^{n-t}).$
\vspace{2mm}

 {\em (3)} Suppose that $0 \leq t\leq n-1$ and  $F\subseteq [r+1]$. If $r+1\in F$, then $F\in \Delta_{(\mathbf{a},t)}(J^n)$ if and only if  $F=\{r+1\}$, $G_{\mathbf{a}}=\emptyset$ and $|\mathbf{a}|\leq n-1$.

\vspace{2mm}
 {\em (4)} Suppose  $t\geq n$.  Then $\Delta_{(\mathbf{a},t)}(J^n)$ is either $\emptyset$ or  $\langle\{r+1\}\rangle$. Moreover, $\Delta_{(\mathbf{a},t)}(J^n)=\langle\{r+1\}\rangle$ if and only if $G_{\mathbf{a}}=\emptyset$ and $|\mathbf{a}|\leq n-1$.

\end{Proposition}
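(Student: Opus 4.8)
Throughout the argument write $W:=F\cup G_{(\mathbf{a},t)}$. The single tool behind all four parts is Lemma~\ref{local}: a subset $F\subseteq[r+1]\setminus G_{(\mathbf{a},t)}$ belongs to $\Delta_{(\mathbf{a},t)}(J^n)$ if and only if the monomial $\mathbf{x}^{(\mathbf{a},t)_+}$ does \emph{not} lie in the extended monomial localization $J^n[W]R$. Here $G_{(\mathbf{a},t)}=G_{\mathbf{a}}$ and $\mathbf{x}^{(\mathbf{a},t)_+}=\mathbf{x}^{\mathbf{a}_+}y^{t}$ when $t\geq 0$, whereas $G_{(\mathbf{a},t)}=G_{\mathbf{a}}\cup\{r+1\}$ and $\mathbf{x}^{(\mathbf{a},t)_+}=\mathbf{x}^{\mathbf{a}_+}$ when $t<0$. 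So the plan is: first compute $J^n[W]$ for an arbitrary $W\subseteq[r+1]$, and then, in each of the four ranges of $t$, test membership of $\mathbf{x}^{(\mathbf{a},t)_+}$ in $J^n[W]R$.

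For the localization computation I would use the description recalled in the proof of Proposition~\ref{2.3}: since $I\subseteq\mathfrak{m}$ we have $J^n=I^n\oplus I^{n-1}\mathfrak{m}y\oplus\cdots\oplus I\mathfrak{m}^{n-1}y^{n-1}\oplus\bigoplus_{i\geq 0}\mathfrak{m}^n y^{n+i}$, that is, the $y^d$-homogeneous piece of $J^n$ is $I^{n-d}\mathfrak{m}^d$ for $0\leq d\leq n-1$ and $\mathfrak{m}^n$ for $d\geq n$. Applying $\pi_W$ degree by degree then splits into two regimes. If $r+1\notin W$, then $\mathfrak{m}^d[W]$ is the unit ideal for every $d\geq 1$ once $W\neq\emptyset$, so $J^n[W]=I^n[W]+I^{n-1}[W]y+\cdots+I[W]y^{n-1}+(y^n)$ for $\emptyset\neq W\subseteq[r]$, while $J^n[\emptyset]=J^n$. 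If $r+1\in W$, then setting $y=1$ collapses $J$ to $\mathfrak{m}$ (the generators of $I$ become redundant in $(I,x_1,\dots,x_r)$), hence $J^n[W]=\mathfrak{m}^n[W\cap[r]]$, which is the unit ideal exactly when $W\cap[r]\neq\emptyset$.

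With these two formulas in hand, each part is a short case check. For (2) the hypothesis forces $W=F\cup G_{\mathbf{a}}$ to be a nonempty subset of $[r]$; using the first formula and the nesting $I^n[W]\subseteq I^{n-1}[W]\subseteq\cdots$, the monomial $\mathbf{x}^{\mathbf{a}_+}y^t$ with $0\leq t\leq n-1$ lies in $J^n[W]R$ if and only if $\mathbf{x}^{\mathbf{a}_+}\in I^{n-t}[W]S$, and the latter is precisely the negation of $F\in\Delta_{\mathbf{a}}(I^{n-t})$, again by Lemma~\ref{local}. For (3) one has $r+1\in W$, so by the second formula $\mathbf{x}^{\mathbf{a}_+}y^t\in J^n[W]R$ amounts to $\mathbf{x}^{\mathbf{a}_+}\in\mathfrak{m}^n[W\cap[r]]S$; this fails exactly when $W\cap[r]=\emptyset$ (which forces $F=\{r+1\}$ and $G_{\mathbf{a}}=\emptyset$) and $|\mathbf{a}|\leq n-1$. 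Part (1) is the same computation as (3), with $\mathbf{x}^{\mathbf{a}_+}$ in place of $\mathbf{x}^{\mathbf{a}_+}y^t$ and $W=F\cup G_{\mathbf{a}}\cup\{r+1\}$: the only possibly surviving face is $F=\emptyset$, surviving iff $G_{\mathbf{a}}=\emptyset$ and $|\mathbf{a}|\leq n-1$. For (4), $t\geq n$: if $r+1\notin F$ and $W\neq\emptyset$, the summand $(y^n)$ of the first formula absorbs $\mathbf{x}^{\mathbf{a}_+}y^t$, so no such $F$ lies in $\Delta_{(\mathbf{a},t)}(J^n)$; if $r+1\in F$ one argues as in (3); what remains are the faces $\emptyset$ and $\{r+1\}$, both present exactly when $G_{\mathbf{a}}=\emptyset$ and $|\mathbf{a}|\leq n-1$, so $\Delta_{(\mathbf{a},t)}(J^n)=\langle\{r+1\}\rangle$ in that case and $\emptyset$ otherwise.

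The conceptual content is light; the one place that will demand care is the bookkeeping of the degenerate localizations — the cases $W=\emptyset$, $W\cap[r]=\emptyset$, and $r+1\in W$ — where powers of $\mathfrak{m}$ do not collapse to the unit ideal. These are precisely the situations producing the ``$|\mathbf{a}|\leq n-1$'' thresholds, and I expect them to be the main (though routine) obstacle; consistently tracking $G_{(\mathbf{a},t)}$ and $(\mathbf{a},t)_+$ across the four sign regimes of $t$ is the other thing to watch.
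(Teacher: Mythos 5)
Your proposal is correct and follows essentially the same route as the paper: both rest on Lemma~\ref{local} and on computing the localizations $J[W]$ in the two regimes ($J[W]=(I[W],y)$ for $\emptyset\neq W\subseteq[r]$, and $J[W]=\mathfrak{m}[W\cap[r]]$ when $r+1\in W$), then testing membership of $\mathbf{x}^{(\mathbf{a},t)_+}$ case by case over the sign ranges of $t$. Your packaging via the $y$-graded decomposition of $J^n$ from Proposition~\ref{2.3} is only a cosmetic variation on the paper's direct computation of $J[F\cup G_{\mathbf{a}}]$, and all the threshold checks ($|\mathbf{a}|\leq n-1$, the nesting $I^{n-d}[W]\subseteq I^{n-t}[W]$) match.
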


\begin{proof} (1) Note that $J[r+1]=(x_1,\ldots,x_r)$ and $G_{(\mathbf{a},t)}=G_{\mathbf{a}}\cup \{r+1\}$. If $G_{\mathbf{a}}\neq \emptyset$, then $ J^n[G_{(\mathbf{a},t})]R=R$ and so $\Delta_{(\mathbf{a},t)}(J^n)=\emptyset$. Now assume  $G_{\mathbf{a}}= \emptyset$. Then, for any  $\emptyset \neq F\subseteq [r]$, we have $1\in J[F,G_{(\mathbf{a},t)}]$, and so $F\notin \Delta_{(\mathbf{a},t)}(J^n)$. This implies $\Delta_{(\mathbf{a},t)}(J^n)$ is either $\emptyset$ or $\{\emptyset\}$. Moreover, $\emptyset\in \Delta_{(\mathbf{a},t)}(J^n)$ if and only if $\mathbf{x}^{\mathbf{a}} \notin (x_1,\ldots,x_r)^n$ if and only if $|\mathbf{a}|\leq n-1$. This proves (1).

\vspace{2mm}

 (2)  If either $G_{\mathbf{a}}\neq \emptyset$ or $F\neq \emptyset$, then $J[F\cup G_{\mathbf{a}}]=(I[F\cup G_{\mathbf{a}}], y)$. Thus $\mathbf{x}^{\mathbf{a}}y^t\notin (J[F\cup G_{\mathbf{a}}])^n$ if and only if $\mathbf{x}^{\mathbf{a}}\notin (I[F\cup G_{\mathbf{a}}])^{n-t}$.
This proves (2). \vspace{2mm}

(3) If either $F\supsetneq \{r+1\}$ or $G_{\mathbf{a}}\neq \emptyset$, then  $J^n[F\cup G_{\mathbf{a}}]R=R$ and  $F\notin \Delta_{(\mathbf{a},t)}(J^n)$. Hence, if $F\in \Delta_{(\mathbf{a},t)}(J^n)$, we must have  $F=\{r+1\}$, $G_{\mathbf{a}}=\emptyset$.  Now assume that $F=\{r+1\}$ and $G_{\mathbf{a}}=\emptyset$. Then $J[G_{(\mathbf{a},t)}\cup F]=(x_1,\ldots,x_r)$.   Since $\mathbf{x}^{\mathbf{a}}\notin (x_1,\ldots,x_r)^n$ if and only if $|\mathbf{a}|\leq n-1$, the statement  (3) has been proved.
\vspace{2mm}

(4) Suppose  either $F\cap [r]\neq \emptyset$ or $G_{\mathbf{a}}\neq \emptyset$. Since $y\in  J[F\cup G_{\mathbf{a}}]$ and  $t\geq n$, we have $y^t\in (J[F\cup G_{\mathbf{a}}])^n$ and  $\mathbf{x^{a_+}}y^t\in (J[F\cup G_{\mathbf{a}}])^n$.  Thus, if $G_{\mathbf{a}}\neq \emptyset$ then $\Delta_{(\mathbf{a},t)}(J^n)=\emptyset$,  and if $G_{\mathbf{a}}=\emptyset$ and $F\in \Delta_{(\mathbf{a},t)}(J^n)$, then $F\cap [r]=\emptyset$.

Assume that $G_{\mathbf{a}}=\emptyset$ and $\Delta_{(\mathbf{a},t)}(J^n)\neq \emptyset$.  Then  $\emptyset\in \Delta_{(\mathbf{a},t)}(J^n)$ and it follows that  $\mathbf{x^{a}}y^t\notin J^n$. Since $t\geq n$, we have  $|\mathbf{a}|\leq n-1$, for otherwise, we have $\mathbf{x^{a}}y^t\subseteq (x_1y,\ldots,x_ry)^n\subseteq J^n$, a contradiction. From this it follows that $\mathbf{x^{a}}y^t\notin (J[r+1])^n=(x_1,\ldots,x_r)^n$ and so $\{r+1\}\in \Delta_{(\mathbf{a},t)}(J^n)$. Hence, we have  $\Delta_{(\mathbf{a},t)}(J^n)=\langle\{r+1\}\rangle$ and  the first statement has been proved. The second one follows immediately by the fact $\mathbf{x}^{\mathbf{a}}y^t\notin (J[r+1])^n$ if and only if $|\mathbf{a}|\leq n-1$.
\end{proof}

We now present the main result of this section.

\begin{Theorem}   \label{2.12}

{\em (1)}  If $i\geq 2$ then $a_i(R/J^n)=\max\{a_i(S/I^{n-t})+t: 0\leq t\leq n-1\}.$

{\em (2)} If $I[j]\neq S[j]$ for some $j\in [r]$, then $$a_1(R/J^n)=\max\{2n-2, a_1(S/I^{n-t})+t: 0\leq t\leq n-1\}.$$
\end{Theorem}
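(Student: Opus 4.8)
The strategy is to compute $a_i(R/J^n)$ directly from Takayama's formula (Lemma~\ref{lu}, which applies since $J$ is squarefree) by analyzing the $\mathbb{Z}^{r+1}$-graded pieces of $H^i_{\mathfrak n}(R/J^n)$, using the case analysis of Proposition~\ref{c2} to translate everything into statements about $\Delta_{\mathbf a}(I^{n-t})$ and hence, again via Lemma~\ref{lu}, into $a_i(S/I^{n-t})$. First I would fix a multidegree $(\mathbf a, t)\in\mathbb{Z}^{r+1}$ and split according to the sign of $t$. For $t<0$ and for $t\ge n$, Proposition~\ref{c2}(1),(4) say $\Delta_{(\mathbf a,t)}(J^n)$ is a point or $\{\emptyset\}$ or empty, so its reduced homology vanishes except possibly $\widetilde H_{-1}(\{\emptyset\})$; the only surviving contribution is then to $H^0_{\mathfrak n}$ (when $G_{\mathbf a}=\emptyset$) or to $H^1_{\mathfrak n}$ via the link formula (when $G_{\mathbf a}\neq\emptyset$), and in the latter case $|\mathbf a|\le n-1$ forces $|\mathbf a|+t\le n-1+t$. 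For $t\ge n$ the simplex $\langle\{r+1\}\rangle$ is acyclic, so nothing survives in positive cohomological degree. This disposes of $t\notin\{0,\dots,n-1\}$ and shows these multidegrees contribute nothing to $a_i(R/J^n)$ for $i\ge 2$, and contribute at most degree $n-1$ (coming from $t<0$, $|\mathbf a|\le n-1$) to $a_1$.

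Next, for the main range $0\le t\le n-1$, I would use Proposition~\ref{c2}(2),(3) to identify $\Delta_{(\mathbf a,t)}(J^n)$ with $\Delta_{\mathbf a}(I^{n-t})$ when $(\mathbf a,t)$ has $G_{\mathbf a}\neq\emptyset$ or $F\not\ni r+1$, with an extra vertex $\{r+1\}$ adjoined exactly when $G_{\mathbf a}=\emptyset$ and $|\mathbf a|\le n-1$. Writing $\Delta:=\Delta_{\mathbf a}(I^{n-t})$ and $\Delta_1:=\Delta\cup\{r+1\}$, Lemma~\ref{sh1}(4) gives $\widetilde H_j(\Delta_1;K)=\widetilde H_j(\Delta;K)$ for $j\neq 0$ and bumps the $0$th Betti number by one. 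Combined with $|G_{(\mathbf a,t)}|=|G_{\mathbf a}|$ (since $t\ge 0$), Takayama's formula then yields, for each $i\ge 2$,
$$\dim_K H^i_{\mathfrak n}(R/J^n)_{(\mathbf a,t)}=\dim_K\widetilde H_{i-|G_{\mathbf a}|-1}(\Delta_{\mathbf a}(I^{n-t});K)=\dim_K H^i_{\mathfrak m}(S/I^{n-t})_{\mathbf a},$$
so $H^i_{\mathfrak n}(R/J^n)$ is nonzero in degree $(\mathbf a,t)$ iff $H^i_{\mathfrak m}(S/I^{n-t})$ is nonzero in degree $\mathbf a$. Taking the maximum of the total degree $|\mathbf a|+t$ over all such $(\mathbf a,t)$ (and recalling from the first paragraph that degrees with $t\notin\{0,\dots,n-1\}$ contribute nothing for $i\ge 2$) gives exactly $\max\{a_i(S/I^{n-t})+t:0\le t\le n-1\}$, proving (1). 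For (2) with $i=1$, the same identification holds except that the adjoined vertex now affects $\widetilde H_0$, i.e. $H^1_{\mathfrak n}(R/J^n)_{(\mathbf a,t)}$ for $G_{\mathbf a}=\emptyset$, $|\mathbf a|\le n-1$, $0\le t\le n-1$; here the hypothesis $I[j]\neq S[j]$ for some $j$ guarantees such multidegrees with $\Delta_{\mathbf a}(I^{n-t})$ actually a subcomplex of $2^{[r]\setminus\{j\}}$ (so $\Delta_1$ disconnected, hence $\widetilde H_0(\Delta_1;K)\neq 0$) do occur and one can push $|\mathbf a|+t$ up to $2n-2$ (take $t=n-1$, $|\mathbf a|=n-1$ supported off $j$, or trace through the $t<0$ case). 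Together with the contribution $\max\{a_1(S/I^{n-t})+t\}$ coming from multidegrees with $G_{\mathbf a}\neq\emptyset$ or $r+1\notin F$, this gives the stated formula.

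The main obstacle I anticipate is bookkeeping the boundary cases cleanly: precisely when the extra vertex $\{r+1\}$ is present, and ensuring that the $2n-2$ term in (2) is genuinely achieved (not merely an upper bound) — this is where the hypothesis $I[j]\ne S[j]$ is used, to produce a multidegree $\mathbf a$ with $a_j$ large enough that $\mathbf x^{\mathbf a}\notin I[F\cup G_{\mathbf a}]^{n-t}$ while keeping $G_{\mathbf a}=\emptyset$ and $|\mathbf a|$ near $n-1$, so that $\Delta_{\mathbf a}(I^{n-t})\cup\{r+1\}$ is disconnected and contributes to $a_1$ in total degree $2n-2$. A secondary subtlety is making sure that when one takes suprema, the multidegrees realizing $a_i(S/I^{n-t})$ (with arbitrary, possibly negative, entries) do lift to multidegrees $(\mathbf a,t)$ for $R/J^n$ in the allowed range — but this follows from Lemma~\ref{transfer}, which lets us assume $\mathbf a\in\mathbb{N}^r$ when computing $a_i(S/I^{n-t})$, and then $(\mathbf a,t)$ with $0\le t\le n-1$ is automatically in the good range of Proposition~\ref{c2}(2).
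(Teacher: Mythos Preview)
Your overall strategy --- Takayama's formula plus the case analysis of Proposition~\ref{c2} --- is exactly the paper's, but several details are wrong and would derail the argument if executed as written.

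First, $J$ is \emph{not} squarefree in general: in Section~2 the ideal $I$ is an arbitrary monomial ideal (e.g.\ $I=(x_1^2)$ gives $J=(x_1^2,x_1y,\dots,x_ry)$), so Lemma~\ref{lu} does not apply. You must use the general Takayama formula (Lemma~\ref{Takayama}) and check the side condition $G_{(\mathbf a,t)}\in\Delta(J^n)$ each time; the paper does this via the identification $\Delta(J)\cap 2^{[r]}=\Delta(I)$.

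Second, your invocation of Lemma~\ref{transfer} in the last paragraph is mistaken: that lemma does \emph{not} allow one to assume $\mathbf a\in\mathbb N^r$ when computing $a_i(S/I^{n-t})$. The extremal $\mathbf a$ may well have negative entries --- indeed, when $\Delta_{\mathbf a}(I^{n-t})=\{\emptyset\}$ one has $|G_{\mathbf a}|=i\ge 1$. Your worry about lifting such $\mathbf a$ is actually unfounded (Proposition~\ref{c2}(2) already covers arbitrary $\mathbf a\in\mathbb Z^r$), but the case $\Delta_{\mathbf a}(I^{n-t})=\{\emptyset\}$ must still be handled separately: one needs $G_{\mathbf a}\neq\emptyset$ to deduce from Proposition~\ref{c2}(2),(3) that $\Delta_{(\mathbf a,t)}(J^n)=\{\emptyset\}$ as well. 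The paper splits the $\ge$ direction into exactly these two cases.

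Two smaller slips: for $t<0$ one has $|G_{(\mathbf a,t)}|=|G_{\mathbf a}|+1$, so when $G_{\mathbf a}=\emptyset$ the complex $\{\emptyset\}$ contributes to $H^1_{\mathfrak n}$, not $H^0_{\mathfrak n}$ (and when $G_{\mathbf a}\neq\emptyset$ Proposition~\ref{c2}(1) gives $\Delta_{(\mathbf a,t)}(J^n)=\emptyset$, so nothing survives). And for the lower bound $a_1(R/J^n)\ge 2n-2$, you want $\mathbf a$ supported \emph{on} the vertex $j$ with $I[j]\neq S[j]$, not off it: the paper takes $\mathbf a=(n-1,0,\dots,0)$, $t=n-1$, and checks directly that $\{1\}$ and $\{r+1\}$ lie in different components of $\Delta_{(\mathbf a,n-1)}(J^n)$.
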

\begin{proof}
 Given any  integer $i\geq 1$, we first show that $a_i(R/J^n)\geq a_i(S/I^{n-t})+t$ for all $0\leq t\leq n-1$.
Fix $0\leq t\leq n-1$.  If  $a_i(S/I^{n-t})=-\infty$, there is nothing to prove.
Now, we assume $a_i(S/I^{n-t})\neq -\infty$ and let $\mathbf{a}\in \mathbb{Z}^r$ such that $H_{\mathfrak{m}}^i(S/I^{n-t})_{\mathbf{a}}\neq 0$ with $|\mathbf{a}|=a_i(S/I^{n-t})$. Since $\widetilde{H}_{i-|G_{\mathbf{a}}|-1}(\Delta_{\mathbf{a}}(I^{n-t});K)\neq 0$, see Lemma~\ref{Takayama}, one has $\Delta_{\mathbf{a}}(I^{n-t})\neq \emptyset$. There are two cases to consider.

We first consider the case that  $\Delta_{\mathbf{a}}(I^{n-t})=\{ \emptyset\}$. Since $\widetilde{H}_{i-|G_{\mathbf{a}}|-1}(\{\emptyset\};K)\neq 0$,
we have $|G_{\mathbf{a}}|=i\geq 1$ by Lemma~\ref{sh1}.(1). From this it follows that $r+1\notin F$ for any $F\in \Delta_{(\mathbf{a},t)}(J^n)$ by Proposition~\ref{c2}.(3) and so $\Delta_{(\mathbf{a},t)}(J^n)=\{\emptyset\}$ by  Proposition~\ref{c2}.(2). Note that $\sqrt J=(\sqrt I, x_1y,\ldots,x_ry)$, we have $\Delta (I^k)=\Delta (I)=\Delta(J) \cap 2^{[r]}=\Delta(J^\ell)$ for all $k,\ell \geq 1$. This implies $G_{(\mathbf{a},t)}=G_{\mathbf{a}}\in \Delta{(J^n)}$, and so  $H_{\mathfrak{n}}^i(R/J^n)_{(\mathbf{a},t)}=\widetilde{H}_{-1}(\{\emptyset\})\neq 0$ by Lemma~\ref{Takayama}. Thus, $a_i(R/J^n)\geq |\mathbf{a}|+t= a_i(S/I^{n-t})+t$.

 We next consider the case when $\Delta_{\mathbf{a}}(I^{n-t})\supsetneqq \{ \emptyset\}$. In this case we have  either $\Delta_{(\mathbf{a},t)}(J^n)=\Delta_{\mathbf{a}}(I^{n-t})\cup \{r+1\}$ and $G_{\mathbf{a}}=\emptyset$ or $\Delta_{(\mathbf{a},t)}(J^n)=\Delta_{\mathbf{a}}(I^{n-t})$ by (2) and (3) of Proposition~\ref{c2}. From this  it  follows that $$\dim_K\widetilde{H}_{i-|G_{(\mathbf{a},t)}|-1}(\Delta_{(\mathbf{a},t)}(J^n);K)\geq \dim_K\widetilde{H}_{i-|G_{\mathbf{a}}|-1}(\Delta_{\mathbf{a}}(I^{n-t});K)\neq 0$$ in view of Lemma~\ref{sh1}.(4), and so $H_{\mathfrak{n}}^i(R/J^n)_{(\mathbf{a},t)}\neq 0$. Thus, we also have $a_i(R/J^n)\geq a_i(S/I^{n-t})+t$.

Conversely, we may harmlessly assume that $a_i(R/J^n)\neq -\infty$.  Let $(\mathbf{a},t)\in \mathbb{Z}^{r+1}$ such that $H_{\mathfrak{n}}^i(R/J^n)_{(\mathbf{a},t)}\neq 0$ and $a_i(R/J^n)=|\mathbf{a}|+t.$   It is clear that $\Delta_{(\mathbf{a},t)}(J^n)\neq \emptyset$, and it is also clear that $t\leq n-1$ by Proposition~\ref{c2}.(4).

If $i\geq 2$,  then it must be $t\geq 0$, for otherwise, we have  $\Delta_{(\mathbf{a},t)}(J^n)=\{\emptyset\}$ and $G_{\mathbf{a}}=\emptyset$ by Proposition~\ref{c2}.(1), which implies $i=1$ by Lemma~\ref{sh1}.(1), a contradiction. We also have either  $G_{\mathbf{a}}\neq \emptyset$ or $\Delta_{(\mathbf{a},t)}(J^n)\neq \{\emptyset\}$, since $H_{\mathfrak{n}}^i(R/J^n)_{(\mathbf{a},t)}\neq 0$.
  From these it follows that either $\Delta_{(\mathbf{a},t)}(J^n)=  \Delta_{\mathbf{a}}(I^{n-t})$ or $\Delta_{(\mathbf{a},t)}(J^n)=\Delta_{\mathbf{a}}(I^{n-t})\cup  \{r+1\}$ and $G_{\mathbf{a}}=\emptyset$ by Proposition~\ref{c2}.(2,3). Therefore, $\widetilde{H}_{i-|G_{\mathbf{a}}|-1}(\Delta_{\mathbf{a}}(I^{n-t});K)\neq 0$ in view of Lemma~\ref{sh1}.(4). Since $G_{\mathbf{a}}=G_{(\mathbf{a},t)}\in \Delta(J)\cap 2^{[r]}=\Delta(I)$, we have $H_{\mathfrak{m}}^i(S/I^{n-t})_{\mathbf{a}}\neq 0$ by Lemma~\ref{Takayama} , and thus, $a_i(R/J^n)\leq a_i(S/I^{n-t})+t$,  completing the proof of (1).

Now consider the case when $i=1$. If $t<0$ then it must be $\Delta_{(\mathbf{a},t)}(J^n)=\{\emptyset\}$ and $G_{\mathbf{a}}=\emptyset$ with $|\mathbf{a}|\leq n-1$ by Proposition~\ref{c2}.(1). Thus, $a_1(R/J^n)=|\mathbf{a}|-|t|\leq n-2$.

Suppose that  $t\geq 0$.  If $\Delta_{(\mathbf{a},t)}(J^n)=\{\emptyset\}$, then it must be $|G_{\mathbf{a}}|=1$ and so $\Delta_{\mathbf{a}}(I^{n-t})=\{\emptyset\}$ by Proposition~\ref{c2}.(2). Since $G_{(\mathbf{a},t)}\in \Delta(J)$, we have $G_{\mathbf{a}}\in \Delta(I)$. From these it follows that $H_{\mathfrak{m}}^1(S/I^{n-t})_{\mathbf{a}}\neq 0$ by Lemma~\ref{Takayama} and so $a_1(R/J^n)\leq a_1(S/I^{n-t})+t$.

If $\Delta_{(\mathbf{a},t)}(J^n)\neq \{\emptyset\}$ then either $\Delta_{(\mathbf{a},t)}(J^n)=  \Delta_{\mathbf{a}}(I^{n-t})$ or $\Delta_{(\mathbf{a},t)}(J^n)= \Delta_{\mathbf{a}}(I^{n-t})\cup \{r+1\}$ and $G_{\mathbf{a}}=\emptyset$ by (2) and (3) of Proposition~\ref{c2}. In the first case, we have $H_{\mathfrak{m}}^1(S/I^{n-t})_{\mathbf{a}}\neq 0$ and $a_1(R/J^n)\leq a_1(S/I^{n-t})+t$; In the second case, it follows that  $|\mathbf{a}|\leq n-1$ by Proposition~\ref{c2}.(3) and so $a_1(R/J^n)=|\mathbf{a}|+t\leq 2n-2$.

Finally, we show that $a_1(R/J^n)\geq 2n-2$. We may harmlessly assume that $I[1]\neq S[1]$. Let $\mathbf{a}$ denote the vector $(n-1,0,\ldots,0)$ of $\mathbb{Z}^{r}$. Since $$x_1^{n-1}y^{n-1}\notin (J[r+1])^nS=(x_1,\ldots,x_r)^nS$$ and  $$x_1^{n-1}y^{n-1}\notin (J[1])^n=(I[1],y)^nS,$$ both $\{r+1\}$ and $\{1\}$
are faces of $\Delta_{(\mathbf{a},n-1)}(J^n)$. On the other hand, for any $j\in [r]$, since $J[j,r+1]R=R$, one has  $\{j,r+1\}\notin \Delta_{\mathbf{a}}(J^n)$. From this it follows that  $\Delta_{(\mathbf{a},n-1)}(J^n)$ is disconnected and so $a_1(R/J^n)\geq |\mathbf{a}|=2n-2$, as required. \end{proof}
The  condition that $I[j]\neq S[j]$ for some $j\in [r]$  is equivalent to that $\sqrt I\neq (x_1,\ldots, x_r)$.  It  is a weak requirement but necessary in Theorem~\ref{2.12}.(2) in view of Example~\ref{Example}.

\section{The $a_1$ and $a_2$ of $S/I_G^n$}

 By a simple graph   we mean an undirected graph  having no loops and no parallel edges. In this section we always assume that  $G$ is a simple graph without isolated vertex on  vertex set $[r]$. Let $E(G)$ denote the edge set of $G$, and let  $I_G$  denote the Stanley-Reisner ideal of $G$ when $G$ is considered as an one-dimensional simplicial complex. This means $$I_G=(\mathbf{x}_F:  F\subseteq [r], 2\leq |F|\leq 3, F\notin E(G) )=\bigcap_{e\in E(G)} P_e \subseteq K[x_1,\ldots,x_r].$$
  Here $\mathbf{x}_F$ denotes the monomial $\prod_{i\in F}x_i$ and $P_e$ the monomial prime  $(x_i:i\in [r]\setminus e)$.  We will compute the values of $a_1(S/I_G^n)$ and $a_2(S/I_G^n)$ for $n\geq 1$. For the convenience, we set  $$a_i^j(S/I_G^n):=\max\{|\mathbf{a}|: \mathbf{a}\in \mathbb{Z}^r, H_{\mathfrak{m}}^i(S/I_G^n)_{\mathbf{a}}\neq 0, |G_{\mathbf{a}}|=j\}$$ for $i,j\geq 0$. Here $|\mathbf{a}|=a_1+\cdots+a_r$ and $|G_{\mathbf{a}}|$ is the cardinality of  $G_{\mathbf{a}}$.
Thus, $$a_i(S/I_G^n)=\max\{a_i^j(S/I_G^n):j\geq 0\}$$ for all $i\geq 0$.

We recall some basic  notions in graph theory.  Let $p\in [r]$. We use $N_p$ to denote the neighborhood of $p$, that is, $N_p:=\{i\in [r]: \{i,p\}\mbox{  is an edge of  } G \}$.  The {\it degree} of $p$, denoted by $\deg p$, is the cardinality of $N_p$. The maximal  degree  of vertices of $G$ is denoted by $\deg(G)$. It is clear that if $\deg(G)=1$ then $G$ is the disjoint union of some edges. Let $q\in [r]$.  The {\it path} between $p$ and  $q$ is a sequence of distinct vertices $p=p_0,p_1,\ldots,p_{\ell}=q$ such that $\{p_i,p_{i+1}\}$ is an edge of $G$ for $i=0,\ldots,\ell-1$. We write this path as $p=p_0-p_1-\cdots-p_{\ell}=q$ and call $\ell$ to be its {\it length}.  The {\it distance} between  $p$ and $q$, denoted by $d_G(p,q)$ or just $d(p,q)$, is the minimal length of paths from $p$ to $q$, with the convention  that  $d_G(p,q)=\infty$ if there is no paths connecting $p$ and $q$. The maximum of $d_G(p,q)$ with $p,q\in [r]$ is called the {\it diameter} of $G$, denoted by $\mbox{diam}(G)$. Thus,  $\mbox{diam}(G)=\infty$ if and only if $G$ is disconnected.

 Let $\ell \geq 3$. By a {\it cycle} of length $\ell$, we mean a sequence of vertices $p_1,p_2,\ldots,p_{\ell}$ such that $\{p_{i},p_{i+1}\}$ is an edge of $G$ for $i=1,\ldots,\ell-1$ with $p_1,\ldots,p_{\ell-1}$ pairwise distinct and $p_1=p_{\ell}$.  The {\it girth} of a simple graph $G$, denoted by $\mbox{girth}(G)$,  is the smallest length of cycles of $G$, with the convention that $\mbox{girth}(G)=\infty$ if $G$ contains no cycles. It is clear that $3\leq \mbox{girth}(G)\leq \infty$ for any simple graph $G$, and $\mbox{girth}(G)= \infty$ if and only if $G$ is a forest. For unexplained terminology  in graph theory we
refer to \cite{W}.

\subsection{The computation of $a_1(S/I_G^n)$} In this subsection we will compute $a_1(S/I_G^n)$.

 It is clear that if $I$ is generated by monomials $u_1, u_2, \ldots, u_k$ then the monomial localizaton $I[F]=\pi_F(I)$ is generated by $u_1[F],\ldots,u_k[F]$,   where $u_i[F]$ is the image of $u_i$ under the map $\pi_F$ for $i=1,\ldots,k$. If $F=\{p_1,\ldots,p_s\}$ we write $I[p_1,\ldots,p_s]$ instead of $I[\{p_1,\ldots,p_s\}]$. We begin with some   descriptions of monomial localizations of $I_G$ and the simplicial complexes  $\Delta_{\mathbf{a}}(I_G^n)$ for $\mathbf{a}\in \mathbb{N}^r$.

\begin{Lemma} \label{3.1} Let $G$ be a simple graph on $[r]$ and let $\{p,q\}$ be an edge of $G$.  Then  $$I_G[p,q]=(x_i:i\in [r]\setminus \{p,q\}).$$
\end{Lemma}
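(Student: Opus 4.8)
\textbf{Proof plan for Lemma~\ref{3.1}.}
The plan is to prove the two inclusions separately, working entirely at the level of monomial generators. Recall that $I_G=\bigcap_{e\in E(G)}P_e$ with $P_e=(x_i:i\in[r]\setminus e)$, and that monomial localization commutes with intersection, so $I_G[p,q]=\bigcap_{e\in E(G)}P_e[p,q]$. First I would compute each $P_e[p,q]$: applying $\pi_{\{p,q\}}$ sends $x_p,x_q\mapsto 1$ and fixes the remaining variables, so $P_e[p,q]=(x_i:i\in[r]\setminus(e\cup\{p,q\}))$, with the convention that this is the unit ideal exactly when $e\subseteq\{p,q\}$, i.e.\ when $e=\{p,q\}$. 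Since $\{p,q\}\in E(G)$, the factor coming from $e=\{p,q\}$ contributes $P_{\{p,q\}}[p,q]=(x_i:i\in[r]\setminus\{p,q\})$, and every other factor is an ideal containing this one (it is generated by a subset of those same variables, minus possibly one more index in $e\setminus\{p,q\}$ — but in fact $[r]\setminus(e\cup\{p,q\})\subseteq[r]\setminus\{p,q\}$, so $P_e[p,q]\supseteq$ is false in general; rather one checks directly that the intersection of all these monomial primes is $(x_i:i\in[r]\setminus\{p,q\})$).

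More concretely, for the inclusion $(x_i:i\in[r]\setminus\{p,q\})\subseteq I_G[p,q]$: fix $i_0\in[r]\setminus\{p,q\}$; since $G$ has no isolated vertices, pick an edge $e_0$ incident to $i_0$, and then $i_0\notin e_0$ is false — rather $i_0\in e_0$, hence $i_0\notin[r]\setminus e_0$, so $x_{i_0}\notin P_{e_0}$, which is the wrong direction. The cleaner route: show $x_{i_0}\in P_e[p,q]$ for \emph{every} $e\in E(G)$. If $e=\{p,q\}$ this is automatic since $P_{\{p,q\}}[p,q]=(x_i:i\neq p,q)\ni x_{i_0}$. If $e\neq\{p,q\}$, then since $i_0\notin\{p,q\}$, either $i_0\notin e$, in which case $x_{i_0}\in P_e$ and a fortiori $x_{i_0}=\pi_{\{p,q\}}(x_{i_0})\in P_e[p,q]$; or $i_0\in e$, say $e=\{i_0,j\}$ with $j\neq i_0$, and then we need $x_{i_0}\in P_e[p,q]=(x_k:k\neq i_0,j,p,q)$. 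This last containment can fail, so instead I would argue through $I_G$ itself: the generator $x_{i_0}$ — no. The correct and simplest argument is to use the generating-set description of $I_G$ given in the section, namely $I_G=(\mathbf{x}_F: F\subseteq[r],\ 2\le|F|\le 3,\ F\notin E(G))$. For $i_0\notin\{p,q\}$, the set $F=\{i_0,p,q\}$ has cardinality $3$ and is not an edge (edges have cardinality $2$), so $\mathbf{x}_{\{i_0,p,q\}}=x_{i_0}x_px_q\in I_G$, hence $\pi_{\{p,q\}}(x_{i_0}x_px_q)=x_{i_0}\in I_G[p,q]$. This gives one inclusion immediately.

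For the reverse inclusion $I_G[p,q]\subseteq(x_i:i\in[r]\setminus\{p,q\})$: a generator of $I_G[p,q]$ is $\pi_{\{p,q\}}(u)$ for $u$ a minimal monomial generator of $I_G$, i.e.\ $u=\mathbf{x}_F$ with $|F|\in\{2,3\}$ and $F\notin E(G)$. If $F\subseteq\{p,q\}$ then $|F|=2$ forces $F=\{p,q\}\in E(G)$, contradicting $F\notin E(G)$; hence $F\not\subseteq\{p,q\}$, so there exists $i_0\in F\setminus\{p,q\}$, and then $x_{i_0}$ divides $\pi_{\{p,q\}}(u)=\mathbf{x}_{F\setminus\{p,q\}}$, so $\pi_{\{p,q\}}(u)\in(x_i:i\in[r]\setminus\{p,q\})$. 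Therefore every generator of $I_G[p,q]$ lies in $(x_i:i\neq p,q)$, giving the claimed equality.

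The argument is essentially a bookkeeping exercise and I do not anticipate a genuine obstacle; the only point requiring a little care is choosing the right description of $I_G$ to work with — the minimal-generator description $(\mathbf{x}_F:2\le|F|\le3,\ F\notin E(G))$ makes both inclusions transparent, whereas the primary-decomposition description $\bigcap_e P_e$ leads to a messier (though still valid) computation of the intersection of the localized primes. I would present the proof via the generator description as above.
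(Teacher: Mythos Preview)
Your final argument via the generator description $I_G=(\mathbf{x}_F:2\le|F|\le 3,\ F\notin E(G))$ is correct and is exactly what the paper has in mind: the paper's entire proof is the single sentence ``It is immediate from the definition of $I_G[p,q]$,'' and your two-inclusion check (that $x_{i_0}x_px_q\in I_G$ for each $i_0\neq p,q$, and that no generator $\mathbf{x}_F$ with $F\notin E(G)$ can have $F\subseteq\{p,q\}$) is the natural unpacking of that sentence. The earlier detours through the primary decomposition are unnecessary and, as you noticed yourself, lead to messier bookkeeping; drop them and present only the generator-description argument.
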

\begin{proof} It is immediate from the definition of $I_G[p,q]$.
\end{proof}
Denote by $E(G)$ the edge set of $G$. It is clear that if $\{p,q\}\notin E(G)$ then $1\in I[p,q]$, i.e., $I[p,q]=S[p,q]$, and so $\{p,q\}\notin \Delta_{\mathbf{a}}(I_G^n)$ for any $\mathbf{a}\in \mathbb{Z}^r$ and $n\geq 1$.
We now describe the faces of dimension 1 of $\Delta_{\mathbf{a}}(I_G^n)$ for $\mathbf{a}\in \mathbb{N}^r$ and for $n\geq 1$.

\begin{Proposition} \label{edge} Let $p,q$ be distinct vertices of $G$ and $\mathbf{a}=(a_1,\ldots,a_r)$ a vector of $\mathbb{N}^r$. Fix $n\geq 1$. Then the following statements are equivalent:

{\em (1) } $\{p,q\}\in \Delta_{\mathbf{a}}(I_G^n)$;

{\em (2)}  $\{p,q\}\in E(G)$ and $\sum_{i\in [r]\setminus \{p,q\}}a_i\leq n-1$.
\end{Proposition}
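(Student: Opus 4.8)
The plan is to use the characterization of $\Delta_{\mathbf{a}}(I_G^n)$ via monomial localization, namely $F \in \Delta_{\mathbf{a}}(I_G^n)$ iff $\mathbf{x}^{\mathbf{a}} \notin I_G^n[F]S$ (here $\mathbf{a} \in \mathbb{N}^r$, so $G_{\mathbf{a}} = \emptyset$ and $\mathbf{a}_+ = \mathbf{a}$), combined with the explicit description of the localization given in Lemma \ref{3.1}. First I would dispose of the easy implication. If $\{p,q\} \notin E(G)$, then by the remark preceding the proposition $1 \in I_G[p,q]$, so $\mathbf{x}^{\mathbf{a}} \in I_G^n[p,q]S$ and $\{p,q\} \notin \Delta_{\mathbf{a}}(I_G^n)$; hence (1) forces $\{p,q\} \in E(G)$. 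So for the rest of the argument I may assume $\{p,q\} \in E(G)$, and Lemma \ref{3.1} gives $I_G[p,q] = (x_i : i \in [r]\setminus\{p,q\})$, a prime monomial ideal generated by the variables outside $\{p,q\}$. Consequently $I_G^n[p,q] = I_G[p,q]^n = (x_i : i \in [r]\setminus\{p,q\})^n$ (using $(I^n)[F] = (I[F])^n$, which follows from $(IJ)[F] = I[F]J[F]$ noted in the preliminaries).

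The heart of the matter is then the elementary combinatorial fact: for $\mathbf{a} = (a_1,\ldots,a_r) \in \mathbb{N}^r$, one has $\mathbf{x}^{\mathbf{a}} \in (x_i : i \in [r]\setminus\{p,q\})^n S$ if and only if $\sum_{i \in [r]\setminus\{p,q\}} a_i \geq n$. Indeed, $\mathbf{x}^{\mathbf{a}}$ lies in the $n$-th power of the ideal generated by the variables $\{x_i : i \neq p,q\}$ precisely when the total degree of $\mathbf{x}^{\mathbf{a}}$ in those variables is at least $n$, since that power is spanned as a $K$-vector space by the monomials of degree $\geq n$ in $\{x_i : i\neq p,q\}$ (the variables $x_p, x_q$ contribute nothing toward membership). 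Negating, $\mathbf{x}^{\mathbf{a}} \notin I_G^n[p,q]S$ iff $\sum_{i \in [r]\setminus\{p,q\}} a_i \leq n-1$, which is exactly condition (2). This chain of equivalences gives (1) $\Leftrightarrow$ (2).

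There is no serious obstacle here; the only point requiring a little care is the bookkeeping that $\Delta_{\mathbf{a}}(I_G^n)$ for $\mathbf{a} \in \mathbb{N}^r$ is governed by $\mathbf{x}^{\mathbf{a}} \notin I_G^n[F]S$ with $G_{\mathbf{a}} = \emptyset$ (so that Lemma \ref{local} applies cleanly with $\mathbf{a}_+ = \mathbf{a}$), and the commutation of monomial localization with taking powers. Both are already recorded in Section 1, so the proof is short. I would also remark in passing that when $\{p,q\} \in E(G)$ the condition in (2) automatically implies $\{p\}, \{q\} \in \Delta_{\mathbf{a}}(I_G^n)$, since $\sum_{i \neq p} a_i$ and $\sum_{i \neq q} a_i$ are each $\geq \sum_{i \neq p,q} a_i$ but the single-vertex localizations $I_G[p], I_G[q]$ are more complicated — this is not needed for the proposition but anticipates later use.
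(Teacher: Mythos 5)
Your argument is correct and is exactly the route the paper intends: the paper's proof simply cites Lemma~\ref{3.1} together with Lemma~\ref{local} as making the statement immediate, and your write-up fills in precisely those details (the non-edge case via $1\in I_G[p,q]$, the identity $I_G^n[p,q]=(x_i: i\in[r]\setminus\{p,q\})^n$, and the degree criterion for membership in a power of a monomial prime). Nothing is missing; this is the same proof, spelled out.
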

\begin{proof} It is immediate from Lemma~\ref{3.1} together with  Lemma~\ref{local}.
\end{proof}

For a simplicial complex  $\Delta$ and an integer $i\geq 0$,  recall from \cite[Page 144]{HH} that the pure $i$th skeleton of $\Delta$ is defined to be the  pure simplicial complex $\Delta(i)$ whose facets are the faces $F$ of $\Delta$ with $|F|=i+1$. By \cite[Lemmas 1.3 and 2.1]{MT} we see that $\Delta_{\mathbf{a}}(I_G^n)(1)$ coincides with $\Delta_{\mathbf{a}}(I_G^{(n)})$.

Unlike the case of symbolic powers, $\Delta_{\mathbf{a}}(I_G^n)$ may contain  a facet of dimension zero. Let $p\in [r]$.  We say that $p$ is an {\it isolated vertex} of $\Delta_{\mathbf{a}}(I_G^n)$ if $\{p\}$ is a facet of $\Delta_{\mathbf{a}}(I_G^n)$. We want to know when a vertex $p$ of $G$ is an isolated one of $\Delta_{\mathbf{a}}(I_G^n)$.   For this, we denote  $M_p:=[r]\setminus (N_p\cup\{p\})$.

\begin{Lemma} \label{3.3} Let $G$ be a simple graph on $[r]$ and $p$  a vertex of degree $\geq 2$.  Then  $$I_G[p]=(x_ix_j:i\neq j \mbox{ and } i,j\in N_p)+(x_i: i\in M_p).$$
\end{Lemma}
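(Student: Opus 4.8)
The plan is to compute $I_G[p]$ from the primary decomposition $I_G=\bigcap_{e\in E(G)}P_e$ together with the fact, recalled in Section~1, that monomial localization commutes with finite intersections.

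First I would note that $I_G[p]=\bigcap_{e\in E(G)}P_e[p]$, and examine each factor. If $p\notin e$, then $x_p$ is among the generators of $P_e=(x_i:i\in[r]\setminus e)$, so $1\in P_e[p]$, this factor is the unit ideal $S[p]$, and it may be discarded. If $p\in e$, write $e=\{p,q\}$ with $q\in N_p$; then none of the generators of $P_e$ is $x_p$, so $P_e[p]=(x_i:i\in[r]\setminus\{p,q\})$ as in Lemma~\ref{3.1}. Hence
$$I_G[p]=\bigcap_{q\in N_p}\bigl(x_i:i\in[r]\setminus\{p,q\}\bigr).$$

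Next, using the disjoint decomposition $[r]=\{p\}\sqcup N_p\sqcup M_p$, for each $q\in N_p$ we have $[r]\setminus\{p,q\}=(N_p\setminus\{q\})\sqcup M_p$, so the ideal above equals $(x_i:i\in N_p\setminus\{q\})+\mathfrak{a}$, where $\mathfrak{a}:=(x_i:i\in M_p)$ is the same for every $q$. For monomial ideals $A_q$ in the variables $\{x_i:i\in N_p\}$ and $\mathfrak{a}$ in the disjoint set of variables $\{x_i:i\in M_p\}$ one has $\bigcap_q(A_q+\mathfrak{a})=\bigl(\bigcap_q A_q\bigr)+\mathfrak{a}$; this is checked on monomials, a monomial lying in $A_q+\mathfrak{a}$ iff it is divisible by a generator of $A_q$ or of $\mathfrak{a}$. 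Applying this,
$$I_G[p]=\Bigl(\bigcap_{q\in N_p}(x_i:i\in N_p\setminus\{q\})\Bigr)+\mathfrak{a}.$$

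Finally I would identify the remaining intersection. A monomial $\mathbf{x}^{\mathbf{a}}$ lies in $\bigcap_{q\in N_p}(x_i:i\in N_p\setminus\{q\})$ iff $\supp(\mathbf{a})$ meets $N_p\setminus\{q\}$ for every $q\in N_p$; since $|N_p|=\deg p\ge 2$, this is equivalent to $|\supp(\mathbf{a})\cap N_p|\ge 2$, and the minimal such monomials are precisely the $x_ix_j$ with $i\ne j$ and $i,j\in N_p$. Thus this intersection equals $(x_ix_j:i\ne j,\ i,j\in N_p)$, which combined with the previous display gives the asserted formula. There is no serious obstacle in this argument; the only points needing a little care are the step that splits off the common summand $\mathfrak{a}$ and the bookkeeping of which primes $P_e$ survive the localization, both routine for monomial ideals.
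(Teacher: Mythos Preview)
Your argument is correct. The paper's own proof is simply ``It is clear from the definition of $I_G[p]$,'' which presumably refers to the generator description $I_G=(\mathbf{x}_F:F\in\mathcal{N}(G))$: applying $\pi_p$ to each minimal non-face $F$ and simplifying, the non-edges $\{p,i\}$ with $i\in M_p$ yield the linear generators $x_i$, while for $i,j\in N_p$ either $\{i,j\}\notin E(G)$ (so $x_ix_j\in I_G$ already) or $\{i,j,p\}$ is a triangle (so $x_ix_jx_p\in I_G$ and $\pi_p$ produces $x_ix_j$); all remaining generators are redundant modulo these. Your route via the primary decomposition $I_G=\bigcap_{e\in E(G)}P_e$ and the compatibility of monomial localization with intersections is a genuinely different (and arguably cleaner) argument: it avoids the case analysis on minimal non-faces and makes transparent why exactly the primes $P_{\{p,q\}}$ with $q\in N_p$ survive. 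The only step requiring care---splitting off the common summand $\mathfrak{a}$ from the intersection---you justified correctly using the disjointness of the supporting variable sets.
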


\begin{proof} It is clear from the definition of $I_G[p]$.
\end{proof}

 \begin{Proposition} \label{vertex} Let $p$ be a vertex of degree $\geq 2$, and $\mathbf{a}=(a_1,\ldots,a_r)$ a vector in $\mathbb{N}^r$. Fix $n\geq 1$.  Then the following statements are equivalent:

{\em (1)} The vertex $p$ is an isolated vertex of $\Delta_{\mathbf{a}}(I_G^n)$;

 {\em (2)} There exists $t\in \{0,\ldots,n-1\}$ such that

 \begin{itemize}\item $\sum_{i\in M_p}a_i=t$; \hfill\ding{202}
 \item $\sum_{i\in N_p\setminus\{j\}}a_i \geq n-t$ for all $j\in N_p$; \hfill \ding{203}

 \item $\sum_{i\in N_p} a_i\leq 2(n-t)-1$.\hfill \ding{204}
 \end{itemize}
\end{Proposition}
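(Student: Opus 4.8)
The plan is to translate the membership statements about $\Delta_{\mathbf a}(I_G^n)$ into divisibility statements via Lemma~\ref{local}, to handle the ``neighbour'' part with Lemma~\ref{3.3}, and to reduce the remaining question to a membership criterion for powers of the edge ideal of a complete graph. First I would record that, since $\mathbf a\in\mathbb N^r$ we have $G_{\mathbf a}=\emptyset$, so by Lemma~\ref{local} a set $F\subseteq[r]$ lies in $\Delta_{\mathbf a}(I_G^n)$ exactly when $\mathbf x^{\mathbf a}\notin (I_G[F])^nS$. Hence $p$ is an isolated vertex of $\Delta_{\mathbf a}(I_G^n)$ if and only if (a) $\mathbf x^{\mathbf a}\notin(I_G[p])^nS$ and (b) $\mathbf x^{\mathbf a}\in(I_G[p,q])^nS$ for every $q\neq p$ (any face properly containing $\{p\}$ would contain such a $\{p,q\}$, which is then a face of $\Delta_{\mathbf a}(I_G^n)$ too). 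For $q\notin N_p$ the monomial $x_px_q$ lies in $I_G$, so $I_G[p,q]=S[p,q]$ and (b) is automatic; for $q\in N_p$, Proposition~\ref{edge} shows that (b) holds if and only if $\sum_{i\in[r]\setminus\{p,q\}}a_i\ge n$. Writing $\alpha:=\sum_{i\in M_p}a_i$ and $\beta:=\sum_{i\in N_p}a_i$ (so that $|\mathbf a|=a_p+\alpha+\beta$, as $[r]=\{p\}\sqcup N_p\sqcup M_p$), and $\gamma:=\beta-\max_{j\in N_p}a_j=\min_{j\in N_p}\sum_{i\in N_p\setminus\{j\}}a_i$, condition (b) becomes $\alpha+\gamma\ge n$.

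Next I would analyse (a). By Lemma~\ref{3.3}, $I_G[p]=A+B$ with $A:=(x_ix_j: i\ne j,\ i,j\in N_p)$ and $B:=(x_i: i\in M_p)$, and these two ideals are supported on disjoint sets of variables. Consequently $(I_G[p])^n=\sum_{k=0}^nA^kB^{n-k}$, and a monomial lies in $(I_G[p])^nS$ if and only if, for some $k\in\{0,\dots,n\}$, its $N_p$-part lies in $A^kS$ and its $M_p$-part lies in $B^{n-k}S$; the latter just says $\alpha\ge n-k$. For the former I will invoke the following fact: for a finite set $V$ with $|V|\ge2$ and $\mathbf b\in\mathbb N^V$, one has $\mathbf x^{\mathbf b}\in (x_ix_j: i\ne j,\ i,j\in V)^k$ if and only if $\sum_{i\in V}b_i\ge 2k$ and $\sum_{i\in V\setminus\{j\}}b_i\ge k$ for all $j\in V$. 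Applying this with $V=N_p$ and $\mathbf b=(a_i)_{i\in N_p}$, we get that $\mathbf x^{\mathbf a}\in(I_G[p])^nS$ iff there is $k\in\{0,\dots,n\}$ with $n-\alpha\le k$, $2k\le\beta$ and $k\le\gamma$ simultaneously; since $n-\alpha\le n$, such a $k$ exists iff $\max\{0,n-\alpha\}\le\lfloor\beta/2\rfloor$ and $\max\{0,n-\alpha\}\le\gamma$. Therefore (a) holds iff $\max\{0,n-\alpha\}>\lfloor\beta/2\rfloor$ or $\max\{0,n-\alpha\}>\gamma$.

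Finally I would combine the two conditions. Condition (b), i.e. $\gamma\ge n-\alpha$, forces $\gamma\ge\max\{0,n-\alpha\}$, which rules out the second alternative in (a); and if $\alpha\ge n$ then $\max\{0,n-\alpha\}=0\not>\lfloor\beta/2\rfloor$, so (a) already forces $\alpha\le n-1$ and hence $\max\{0,n-\alpha\}=n-\alpha$. Thus $p$ is an isolated vertex of $\Delta_{\mathbf a}(I_G^n)$ if and only if $\alpha\le n-1$, $\gamma\ge n-\alpha$ and $n-\alpha>\lfloor\beta/2\rfloor$, the last of which is equivalent to $\beta\le 2(n-\alpha)-1$. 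Setting $t:=\alpha=\sum_{i\in M_p}a_i$, these three conditions are precisely \ding{202}, \ding{203}, \ding{204} together with $t\in\{0,\dots,n-1\}$; and conversely any $t$ satisfying \ding{202} must equal $\alpha$, so the existence of such a $t$ is equivalent to $(1)$. This closes the argument.

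The one nontrivial ingredient is the membership criterion for powers of $(x_ix_j: i\ne j,\ i,j\in V)$, the edge ideal of the complete graph on $V$. Its ``only if'' direction is an immediate degree-and-support count: each chosen generator $x_ix_j$ contributes $2$ to the total degree and at least $1$ to the degree in the variables outside any fixed $x_j$. The ``if'' direction is the classical statement that a sequence of non-negative integers with even sum $2k$ and largest entry at most $k$ is the degree sequence of a loopless multigraph with $k$ edges; I would prove it by the greedy rule ``place an edge between the two currently largest coordinates and induct,'' which propagates the hypotheses after a short check of the cases where only one or two edges remain to be placed. I expect this degree-sequence lemma, rather than the bookkeeping with Lemmas~\ref{local}, \ref{3.1}, \ref{3.3} and Proposition~\ref{edge}, to be the part that needs the most care.
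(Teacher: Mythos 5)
Your argument is correct and follows essentially the same route as the paper: you reduce to the decomposition $I_G[p]=(x_ix_j:i\neq j,\ i,j\in N_p)+(x_i:i\in M_p)$ of Lemma~\ref{3.3} together with Proposition~\ref{edge}, and the combinatorial crux is the same membership criterion for powers of the complete-graph edge ideal, proved by the same greedy/inductive degree-sequence argument the paper uses. The only (stylistic) difference is that you establish a full if-and-only-if membership criterion and then handle both implications at once by arithmetic with $\alpha,\beta,\gamma$, whereas the paper proves the two implications separately, using only the one direction of that criterion it needs.
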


\begin{proof} Set $J:=(x_ix_j:i\neq j \mbox{ and } i,j\in N_p)$ and $K:=(x_i: i\in M_p)$.  Thus $I_G[p]=J+K$ by Lemma~\ref{3.3}. Since $J$ and $K$ have disjoint supports, we have $\mathbf{x}^{\mathbf{a}}\in (I_G[p])^n$ if and only if there exists $t\in \{0,\ldots, n\}$ such that $\mathbf{x}^{\mathbf{a}}\in J^{n-t}$ and $\mathbf{x}^{\mathbf{a}}\in K^t$. Denote by $t$ the number $\sum_{i\in M_p}a_i$.

 (1)$\Rightarrow$ (2) Assume that $p$ is an isolated vertex of $\Delta_{\mathbf{a}}(I_G^n)$.  If $t\geq n$, then $\mathbf{x}^{\mathbf{a}}\in K^n\subseteq (I_G[p])^n$, and so $p\notin \Delta_{\mathbf{a}}(I_G^n)$,  a contradiction. Thus, $t\in \{0,\ldots,n-1\}$. For each $j\in N_p$, since $\{p,j\}\notin \Delta_{\mathbf{a}}(I_G^n)$, it follows that $\sum_{i\in [r]\setminus \{p,j\}}a_i\geq n$ by Proposition~\ref{edge}. This is equivalent to  requiring $\sum_{i\in N_p\setminus \{j\}}a_i\geq n-t$. It remains to be shown the inequality \ding{204} holds.

 First, we show that $a_i\leq n-t-1$ for all $i\in N_p$. In fact, if $a_j\geq n-t$ for some $j\in N_p$, then, since $\sum_{i\in N_p\setminus \{j\}}a_i\geq n-t$,   we have $\mathbf{x^a}\in J^{n-t}$, and so $\mathbf{x^a}\in I^n_G[p]$. This implies $p\notin \Delta_{\mathbf{a}}(I_G^n)$,  contradicting to our assumption.
Thus, we have $a_i\leq n-t-1$ for all $i\in N_p$. Due to this fact,  in order to obtain the  inequality \ding{204} it is enough to prove the following statement:

{\it Let $t\in \{0,\ldots,n-1\}$. If\ $\sum_{i\in N_p}a_i\geq 2(n-t)$ and $a_i\leq n-t-1$ for each $i\in N_p$,   then $\mathbf{x^{a}}\in J^{n-t}.$}

Set $k:=n-t$.  We will proceed by  induction on $k$. If $k=1$ there is nothing to prove; if $k=2$ then $\mathbf{x^{a}}$ can be written as $x_ix_jx_kx_{\ell}u$, where  $i,j,k,\ell\in N_p$ are pairwise distinct and   $u$ is a monomial in $S$. It follows that $\mathbf{x^{a}}\in J^2$, as required. Suppose now that $k\geq 2$. Since  $\sum_{i\in N_p}a_i\geq 2k$, we may write $\mathbf{a}=\mathbf{b}+\mathbf{c}$ such that $\mathbf{b}=(b_1,\ldots,b_r)\in \mathbb{N}^r,\mathbf{c}\in \mathbb{N}^r$ and $\sum_{i\in N_p}b_i=2k$.
We may harmlessly assume further $N_p=\{1,2,\ldots,s\}$ and $b_1\geq b_2\geq \cdots\geq b_s$. Since $b_i\leq a_i\leq k-1$ for $i=1,\ldots,s$,  we have both $b_1$ and $b_2$ are positive integers, and $b_i\leq k-2$ for $i=3,\ldots,s$. Let $\mathbf{b}'$  denote the vector $(b_1-1,b_2-1,b_3,\ldots,b_s,b_{s+1},\ldots, b_r)\in \mathbb{N}^r$. Then $\mathbf{x}^{\mathbf{b}'}\in J^{k-1}$ by the induction hypothesis.  From this it follows that $\mathbf{x}^{\mathbf{a}}=\mathbf{x}^{\mathbf{b}'}x_1x_2\mathbf{x}^{\mathbf{c}}\in J^k$. Thus, the  desired statement has been proved, and the proof of (1)$\Rightarrow$ (2) is now  complete.

(2)$\Rightarrow$ (1) From the  inequalities \ding{202} and \ding{203}, it follows that $\sum_{i\in [r]\setminus \{p,j\}}a_i\geq n$ for any $j\in N_p$. Thus, $\{p,j\}$ is not an edge of $\Delta_{\mathbf{a}}(I_G^n)$ for any $j\in N_p$. Since $J^{n-t}$ is generated in degree $2(n-t)$, $\mathbf{x}^{\mathbf{a}}\notin J^{n-t}$ by the inequality \ding{204}. This then implies $\mathbf{x}^{\mathbf{a}}\notin (I_G[p])^n$ and so $p$ is an isolated vertex of $\Delta_{\mathbf{a}}(I_G^n)$.
\end{proof}

\begin{Corollary} \label{3.5} {\em (1)} Let $p\in [r]$. If  $\deg p\leq 2$ then $p$ is never  an isolated vertex of $\Delta_{\mathbf{a}}(I_G^n)$ for any $\mathbf{a}\in \mathbb{N}^r$ and $n\geq 1$.

{\em (2)} $\Delta_{\mathbf{a}}(I_G)$ contains no isolated vertices for any $\mathbf{a}\in \mathbb{N}^r.$
\end{Corollary}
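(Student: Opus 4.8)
The plan is to prove part~(1) by treating the cases $\deg p=1$ and $\deg p=2$ separately, and then to read off part~(2) as a special case. Note that $\deg p\geq 1$ throughout, since $G$ has no isolated vertices, so in part~(1) these are the only two cases.

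First I would dispose of $\deg p=2$, which lies within the scope of Proposition~\ref{vertex}. Write $N_p=\{q_1,q_2\}$ and suppose for contradiction that $p$ is an isolated vertex of $\Delta_{\mathbf{a}}(I_G^n)$; pick $t\in\{0,\dots,n-1\}$ as provided by that proposition. Applying condition~\ding{203} with $j=q_2$ and then with $j=q_1$ gives $a_{q_1}\geq n-t$ and $a_{q_2}\geq n-t$, hence $a_{q_1}+a_{q_2}\geq 2(n-t)$; but condition~\ding{204} forces $a_{q_1}+a_{q_2}=\sum_{i\in N_p}a_i\leq 2(n-t)-1$, which is absurd. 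So a vertex of degree $2$ can never be isolated.

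Next comes $\deg p=1$, where Proposition~\ref{vertex} no longer applies and one argues by hand. Let $q$ be the unique neighbour of $p$ and put $M_p=[r]\setminus\{p,q\}$. From the primary decomposition $I_G=\bigcap_{e\in E(G)}P_e$ and the compatibility of monomial localization with intersections, one checks that $P_e[p]=S[p]$ whenever $p\notin e$, while the only edge through $p$ contributes $P_{\{p,q\}}[p]=(x_i:i\in M_p)$; hence $I_G[p]=(x_i:i\in M_p)$ and therefore $(I_G^n)[p]=(I_G[p])^n=(x_i:i\in M_p)^n$. By Lemma~\ref{local}, $\{p\}\in\Delta_{\mathbf{a}}(I_G^n)$ if and only if $\mathbf{x}^{\mathbf{a}}\notin(x_i:i\in M_p)^nS$, that is, if and only if $\sum_{i\in M_p}a_i\leq n-1$. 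On the other hand, Proposition~\ref{edge} tells us that $\{p,q\}\in\Delta_{\mathbf{a}}(I_G^n)$ if and only if $\sum_{i\in[r]\setminus\{p,q\}}a_i\leq n-1$, the same inequality. Consequently, whenever $\{p\}$ is a face of $\Delta_{\mathbf{a}}(I_G^n)$ so is the edge $\{p,q\}$, so $\{p\}$ is never a facet and $p$ is never isolated. This completes part~(1).

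Finally, for part~(2) I would set $n=1$. If $\deg p\leq 2$, part~(1) already gives the conclusion. If $\deg p\geq 3$, then Proposition~\ref{vertex} with $n=1$ forces $t=0$, so conditions~\ding{203} and~\ding{204} become $\sum_{i\in N_p\setminus\{j\}}a_i\geq 1$ for every $j\in N_p$ and $\sum_{i\in N_p}a_i\leq 1$; for a fixed $j$ the chain $1\leq\sum_{i\in N_p\setminus\{j\}}a_i\leq\sum_{i\in N_p}a_i\leq 1$ forces $a_j=0$, and since $j$ was arbitrary this makes every $a_i$ with $i\in N_p$ vanish, contradicting $\sum_{i\in N_p\setminus\{j\}}a_i\geq 1$. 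Hence $\Delta_{\mathbf{a}}(I_G)$ has no isolated vertex. The only step that is not completely routine is the monomial-localization computation $I_G[p]=(x_i:i\in M_p)$ for a pendant vertex $p$, which is needed precisely because the case $\deg p=1$ falls outside the hypotheses of Proposition~\ref{vertex}; everything else is a direct application of Propositions~\ref{edge} and~\ref{vertex}.
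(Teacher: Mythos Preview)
Your proof is correct and follows essentially the same approach as the paper. The only cosmetic differences are that the paper handles $\deg p=1$ first and records the key identity more tersely as $I_G[p]=I_G[p,q]$ (whereas you derive it from the primary decomposition), and the paper's argument for part~(2) is slightly more compressed; the underlying contradictions via Proposition~\ref{vertex} are identical.
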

\begin{proof}
(1) If  $\deg p=1$, then $I_G[p]=I_G[p,q]=(x_i: i\in [r]\setminus \{p,q\})$, where $q$ is the unique vertex belonging to $N_p$. If $p$ is an isolated vertex  of $\Delta_{\mathbf{a}}(I_G^n)$ then it implies that $\mathbf{x}^{\mathbf{a}}\notin (I_G[p])^n$ and $\mathbf{x}^{\mathbf{a}}\in (I_G[p,q])^n$ at the same time, which is impossible.

If $\deg p=2$,  then the three conditions in Proposition~\ref{vertex} for $p$ to be an isolated vertex can not be fulfilled at the same time. In fact, if we write $N_p=\{1,2\}$, then  $a_i\geq n-t$ for  $i=1,2$ by \ding{203}, and it follows that $\sum_{i\in N_p}a_i =a_1+a_2\geq 2(n-t)$. This is contradicted with \ding{204}.

(2) Assume  that $p$ is an isolated vertex of $\Delta_{\mathbf{a}}(I_G)$.   Then $\sum_{i\in N_p} a_i\leq 1$, and $\sum_{i\in N_p\setminus\{j\}} a_i\geq 1$ for all $j\in N_p$ by Proposition~\ref{vertex}. But this is impossible since $|N_p|\geq 3$ by (1).
\end{proof}

Hereafter we   evaluate $a_1^i(S/I_G^n)$ for $i\geq 0$.   If  $i\geq 2$ then $a_1^i(S/I_G^n)=-\infty$  by Lemma~\ref{lu} and by the obvious fact that $\widetilde{H}_i(\Delta;K)=0$ for all $i\leq -2$ and for all $\Delta$.  In the following result we compute $a_1^1(S/I_G^n).$

\begin{Proposition} \label{1} {\em  (1)} If either $\deg(G)\leq 2$ or $n=1$, then $a_1^1(S/I_G^n)=-\infty$.

{\em (2)} If $\deg(G)\geq 3$ and $n\ge2$, then $a_1^1(S/I_G^n)=2n-2$.
\end{Proposition}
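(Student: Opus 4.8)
The plan is to translate the non-vanishing of $H^1_{\mathfrak{m}}(S/I_G^n)_{\mathbf{a}}$ for vectors $\mathbf{a}$ with $|G_{\mathbf{a}}|=1$ into the purely combinatorial statement that some vertex of $G$ is an isolated vertex of a Takayama complex, and then to read off the numerical value from Proposition~\ref{vertex}. Concretely: fix $\mathbf{a}\in\mathbb{Z}^r$ with $G_{\mathbf{a}}=\{p\}$ for a single vertex $p$. By Lemma~\ref{lu}, $H^1_{\mathfrak{m}}(S/I_G^n)_{\mathbf{a}}\neq 0$ if and only if $\widetilde{H}_{-1}(\Delta_{\mathbf{a}}(I_G^n);K)\neq 0$, which by Lemma~\ref{sh1}.(1) means $\Delta_{\mathbf{a}}(I_G^n)=\{\emptyset\}$. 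Since $G_{\mathbf{a}}\neq\emptyset$, Lemma~\ref{transfer} identifies $\Delta_{\mathbf{a}}(I_G^n)$ with $\mathrm{Link}_{\Delta_{\mathbf{a}_+}(I_G^n)}(\{p\})$, and this link equals $\{\emptyset\}$ exactly when $\{p\}$ is a facet of $\Delta_{\mathbf{a}_+}(I_G^n)$, i.e.\ when $p$ is an isolated vertex of $\Delta_{\mathbf{a}_+}(I_G^n)$. Finally, $\mathbf{a}_+$ has $p$-th coordinate $0$ (as $a_p<0$) and $|\mathbf{a}|=|\mathbf{a}_+|+a_p\leq|\mathbf{a}_+|-1$, with equality when $a_p=-1$, a change that leaves $\mathbf{a}_+$ unchanged. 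Hence $a_1^1(S/I_G^n)=-\infty$ unless some $\mathbf{b}\in\mathbb{N}^r$ admits a vertex that is an isolated vertex of $\Delta_{\mathbf{b}}(I_G^n)$, and in that case $a_1^1(S/I_G^n)$ equals the maximum such $|\mathbf{b}|$ minus $1$.

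Part (1) is then immediate: if $\deg(G)\leq 2$, every vertex has degree at most $2$, and Corollary~\ref{3.5}.(1) forbids any vertex from being an isolated vertex of any $\Delta_{\mathbf{b}}(I_G^n)$; if $n=1$, the same conclusion follows from Corollary~\ref{3.5}.(2). In either case the set described above is empty, so $a_1^1(S/I_G^n)=-\infty$.

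For part (2), assume $\deg(G)\geq 3$ and $n\geq 2$. \emph{Upper bound.} If $p$ is an isolated vertex of $\Delta_{\mathbf{b}}(I_G^n)$ (so $b_p=0$ may be assumed, since only $\mathbf{a}_+$ matters), then $\deg p\geq 3$ by Corollary~\ref{3.5}.(1), so Proposition~\ref{vertex} furnishes $t\in\{0,\ldots,n-1\}$ with $\sum_{i\in M_p}b_i=t$ and $\sum_{i\in N_p}b_i\leq 2(n-t)-1$; adding these and using $[r]=\{p\}\sqcup N_p\sqcup M_p$ with $b_p=0$ gives $|\mathbf{b}|\leq 2n-t-1\leq 2n-1$, whence $a_1^1(S/I_G^n)\leq 2n-2$. \emph{Lower bound.} Choose a vertex $p$ with $\deg p\geq 3$ and relabel so that $\{1,2,3\}\subseteq N_p$; let $\mathbf{b}\in\mathbb{N}^r$ have $b_1=b_2=n-1$, $b_3=1$ and all other coordinates $0$ (so $b_p=0$). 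One verifies the three conditions of Proposition~\ref{vertex} with $t=0$: $\sum_{i\in M_p}b_i=0$; $\sum_{i\in N_p\setminus\{j\}}b_i\geq n$ for every $j\in N_p$ (the case $j=3$ reads $2n-2\geq n$, which is where $n\geq 2$ is used, and the other cases are clear); and $\sum_{i\in N_p}b_i=2n-1=2(n-0)-1$. Thus $p$ is an isolated vertex of $\Delta_{\mathbf{b}}(I_G^n)$ with $|\mathbf{b}|=2n-1$, and the reformulation gives $a_1^1(S/I_G^n)\geq 2n-2$. Together with the upper bound, $a_1^1(S/I_G^n)=2n-2$.

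The argument is largely bookkeeping once Proposition~\ref{vertex} and Corollary~\ref{3.5} are available; the step that requires genuine care is the reformulation in the first paragraph, where one must correctly combine Takayama's formula (Lemma~\ref{lu}), the vanishing criterion for $\widetilde{H}_{-1}$ (Lemma~\ref{sh1}.(1)), the link description of $\Delta_{\mathbf{a}}$ for $\mathbf{a}\notin\mathbb{N}^r$ (Lemma~\ref{transfer}), and the notion of isolated vertex, all while keeping track of the relation between $|\mathbf{a}|$ and $|\mathbf{a}_+|$.
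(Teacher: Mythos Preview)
Your proof is correct and follows essentially the same route as the paper's: both reduce via Lemmas~\ref{lu}, \ref{sh1}.(1), and \ref{transfer} to the isolated-vertex criterion, invoke Corollary~\ref{3.5} for part~(1), and for part~(2) use the same witness vector $(n-1,n-1,1,0,\ldots,0)$ together with the inequalities \ding{202} and \ding{204} of Proposition~\ref{vertex} for the bounds. The only difference is that you spell out the $|\mathbf{a}|=|\mathbf{a}_+|-1$ bookkeeping a bit more explicitly.
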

\begin{proof} Let $\mathbf{a}\in \mathbb{Z}^r$  with $|G_{\mathbf{a}}|=1$. Then  $H_{\mathfrak{m}}^1(S/I_G^n)_{\mathbf{a}}\neq 0$ if and only if $\Delta_{\mathbf{a}}(I_G^n)=\{\emptyset\}$ by Lemma~\ref{lu}. The latter is equivalent to requiring $p$ is an isolated vertex $\Delta_{\mathbf{a}_+}(I_G^n)$ by Lemma~\ref{transfer}, where $p$ is the unique element of $G_{\mathbf{a}}$. Thus, if either $\deg (G)\leq 2$ or $n=1$, then $a_1^1(S/I_G^n)=-\infty$
by Corollary~\ref{3.5}.

If $n\geq 2$ and $\deg p\geq 3$ for some $p\in [r]$, we may harmlessly assume $\{1,2,3\}\subseteq N_p$. Then $p$ is an isolated vertex $\Delta_{\mathbf{a}_+}(I_G^n)$ by Proposition~\ref{vertex}, where $$\mathbf{a}=(n-1,n-1,1,0,\ldots,-1,\ldots,0)$$ with ``$-1$" appearing at the $p$-th position. From this it follows that $a_1^1(S/I_G^n)\geq 2n-2$. Finally, we obtain $a_1^1(S/I_G^n)\leq 2n-2$ by the  inequalities \ding{202} and \ding{204} in the statement of  Proposition~\ref{vertex}.
\end{proof}
The value of $a_1(S/I_G^n)$ when $n=1$ is now clear.
\begin{Corollary} \label{n=1} If $G$ is disconnected, then $a_1(S/I_G)=0$; If $G$ is connected, then $a_1(S/I_G)=-\infty$.                                                                                 \end{Corollary}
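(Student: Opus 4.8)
The plan is to reduce $a_1(S/I_G)$ to the single piece $a_1^0(S/I_G)$ and then read off, via Takayama's formula, exactly when the simplicial complexes $\Delta_{\mathbf{a}}(I_G)$ are disconnected. For the reduction, recall $a_1(S/I_G)=\max\{a_1^j(S/I_G):j\geq 0\}$. When $j\geq 2$, Lemma~\ref{lu} gives $\dim_K H^1_{\mathfrak{m}}(S/I_G)_{\mathbf{a}}=\dim_K\widetilde{H}_{-j}(\Delta_{\mathbf{a}}(I_G);K)=0$, since $\widetilde{H}_i$ vanishes for $i\leq -2$; when $j=1$, Proposition~\ref{1}.(1) with $n=1$ gives $a_1^1(S/I_G)=-\infty$. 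Hence $a_1(S/I_G)=a_1^0(S/I_G)$, and by Lemma~\ref{lu} together with Lemma~\ref{sh1}.(3), for $\mathbf{a}\in\mathbb{N}^r$ we have $H^1_{\mathfrak{m}}(S/I_G)_{\mathbf{a}}\neq 0$ if and only if $\Delta_{\mathbf{a}}(I_G)$ is disconnected.

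The heart of the argument is the claim that $\mathbf{a}=\mathbf{0}$ is the only vector in $\mathbb{N}^r$ for which $\Delta_{\mathbf{a}}(I_G)$ can be disconnected. Suppose $\mathbf{a}\in\mathbb{N}^r$ and $\Delta_{\mathbf{a}}(I_G)$ is disconnected. By Corollary~\ref{3.5}.(2) this complex has no isolated vertices, so each of its (at least two) connected components contains an edge; picking one edge from each of two distinct components yields edges $e_1,e_2\in\Delta_{\mathbf{a}}(I_G)$ with $e_1\cap e_2=\emptyset$. Proposition~\ref{edge} (with $n=1$) then forces $\sum_{i\notin e_1}a_i=0$ and $\sum_{i\notin e_2}a_i=0$, and since every index of $[r]$ lies outside $e_1$ or outside $e_2$, we conclude $\mathbf{a}=\mathbf{0}$. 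Finally, $\Delta_{\mathbf{0}}(I_G)=\Delta(I_G)=G$ (a subset of $[r]$ belongs to either complex exactly when it contains no minimal non-face of $G$), which is disconnected precisely when $G$ is.

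Putting this together: if $G$ is disconnected, then $H^1_{\mathfrak{m}}(S/I_G)_{\mathbf{0}}\neq 0$ while $H^1_{\mathfrak{m}}(S/I_G)_{\mathbf{a}}=0$ for every $\mathbf{0}\neq\mathbf{a}\in\mathbb{N}^r$, so $a_1(S/I_G)=a_1^0(S/I_G)=0$; if $G$ is connected, then $H^1_{\mathfrak{m}}(S/I_G)_{\mathbf{a}}=0$ for all $\mathbf{a}\in\mathbb{N}^r$, so $a_1(S/I_G)=-\infty$. I do not expect any serious obstacle: the proof is an assembly of Lemma~\ref{lu}, Proposition~\ref{edge}, and Corollary~\ref{3.5}.(2). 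The one step deserving care is the passage from disconnectedness of $\Delta_{\mathbf{a}}(I_G)$ to the existence of two vertex-disjoint edges, and this is exactly where the absence of isolated vertices furnished by Corollary~\ref{3.5}.(2) is indispensable, since otherwise a connected component could be a lone vertex carrying no edge.
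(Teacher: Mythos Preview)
Your proposal is correct and follows essentially the same approach as the paper: reduce to $a_1^0$ via Proposition~\ref{1}.(1), use Corollary~\ref{3.5}.(2) to rule out isolated vertices so that disconnectedness of $\Delta_{\mathbf{a}}(I_G)$ forces two disjoint edges, and then apply Proposition~\ref{edge} with $n=1$ to conclude $\mathbf{a}=\mathbf{0}$ and $\Delta_{\mathbf{0}}(I_G)=G$. The only cosmetic difference is that you carry out the reduction to $a_1^0$ at the outset (and spell out the $j\geq 2$ case), whereas the paper invokes Proposition~\ref{1}.(1) at the end.
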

\begin{proof} Let $\mathbf{a}\in \mathbb{N}^r$. If $H_{\mathfrak{m}}^1(S/I_G)_{\mathbf{a}}\neq 0$, then $\Delta_{\mathbf{a}}(I_G)$ is disconnected, and so $\Delta_{\mathbf{a}}(I_G)$  contain disjoint edges due to Corollary~\ref{3.5}.(2).   From this it follows that $\mathbf{a}=(0,\ldots,0)$ by Proposition~\ref{edge} and $a_1^0(S/I_G)\in\{0,-\infty\}$.

Denote $(0,\ldots,0)$ by $\mathbf{0}$. Since $\Delta_{\mathbf{0}}(I_G)=G$, $\Delta_{\mathbf{0}}(I_G)$ is disconnected if and only if $G$ is disconnected.  Thus, $a_1^0(S/I_G)=0$ if $G$ is disconnected, and $a_1^0(S/I_G)=-\infty$ if $G$ is connected. Now, the result follows from Proposition~\ref{1}.(1).
\end{proof}

We remark that Corollary~\ref{n=1} is also clear from  \cite[Lemma 2.1 and 2.2]{HT2}.
Next, we  compute $a_1^0(S/I_G^n)$ for $n\geq 2$.

\begin{Definition} {\em Let $G$ be a simple graph on [r]. A  vertex $p$ of $G$ is said to be} compact {\em if $\deg p\geq 3$ and it belongs to a triangle, where a triangle is a cycle of length 3.}
\end{Definition}

One may look at Broom as given in Figure~\ref{G}, where $3$ is a compact vertex.

\begin{Proposition} \label{nogood} Let $G$ be a simple graph on $[r]$ with $r\geq 3$.  Suppose that $G$ contains no compact vertices.  Then, for all $n\geq 2$, we have $a_1^0(S/I_G^n)\leq 2n-1$.  Moreover, $a_1^0(S/I_G^n)= 2n-1$ if and only if there exist   non-adjacent distinct vertices $p$ and $q$ of $G$ such that $|N_p\cap N_q|\geq 3$.
\end{Proposition}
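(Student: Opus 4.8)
The plan is to reduce the entire statement to a combinatorial optimization over the complexes $\Delta_{\mathbf a}(I_G^n)$. By Lemma~\ref{lu}, for $\mathbf a\in\mathbb N^r$ (that is, $|G_{\mathbf a}|=0$) one has $\dim_K H^1_{\mathfrak m}(S/I_G^n)_{\mathbf a}=\dim_K\widetilde H_0(\Delta_{\mathbf a}(I_G^n);K)$, which is nonzero precisely when $\Delta_{\mathbf a}(I_G^n)$ is disconnected (Lemma~\ref{sh1}.(3)). Since $\Delta_{\mathbf a}(I_G^n)$ is a one–dimensional subcomplex of $G$, its edges are described by Proposition~\ref{edge} and its isolated vertices by Proposition~\ref{vertex}. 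Hence $a_1^0(S/I_G^n)$ equals the largest $|\mathbf a|$, over $\mathbf a\in\mathbb N^r$, for which $\Delta_{\mathbf a}(I_G^n)$ is disconnected (hence nonempty with at least two vertices), and the whole proposition becomes a maximization problem constrained by Propositions~\ref{edge} and \ref{vertex}.

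For the bound $a_1^0(S/I_G^n)\le 2n-1$, I fix $\mathbf a\in\mathbb N^r$ with $\Delta:=\Delta_{\mathbf a}(I_G^n)$ disconnected and split on whether $\Delta$ has an edge. If $\{p,q\}$ is an edge of $\Delta$, then $\sum_{i\neq p,q}a_i\le n-1$ by Proposition~\ref{edge}, so, assuming for contradiction $|\mathbf a|\ge 2n$ (equivalently $a_p+a_q\ge n+1$), I choose a vertex $s$ lying in a connected component of $\Delta$ other than that of $\{p,q\}$, so $s\neq p,q$ and $\deg s\ge1$, and run through $\deg s\in\{1,2\}$ and $\deg s\ge3$, splitting the last case according to whether $s$ is an isolated vertex of $\Delta$. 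Using Proposition~\ref{edge}, Corollary~\ref{3.5}, the leaf identity $I_G[p]=I_G[p,q]$, and — when $s$ is isolated — the inequalities \ding{203} and \ding{204} of Proposition~\ref{vertex}, each configuration forces $a_p+a_q\le n-1$, except the lone case $\deg s\ge3$ with $s$ isolated in $\Delta$ and $p,q\in N_s$; but then $\{p,q,s\}$ is a triangle through the degree-$\ge3$ vertex $s$, i.e.\ $s$ is compact, against the hypothesis. If instead $\Delta$ is edgeless, then by Corollary~\ref{3.5}.(1) and the leaf identity it is a set of at least two vertices, each isolated in $\Delta$ and of $G$-degree $\ge3$; choosing two of them $p,q$ with associated $t_p,t_q\in\{0,\dots,n-1\}$ from Proposition~\ref{vertex}, I treat $p,q$ adjacent and non-adjacent separately. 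When $p\sim q$, absence of a triangle at $p$ forces $N_p\setminus\{q\}\subseteq M_q$, hence $t_p+t_q\ge n$ by \ding{203} for $p$, and then $|\mathbf a|\le 2n-2$; when $p\not\sim q$ one has $p\in M_q$ and $q\in M_p$, so $a_p\le t_q$ and $a_q\le t_p$, and combining $|\mathbf a|=a_p+\sum_{i\in N_p}a_i+t_p\le t_q+(2(n-t_p)-1)+t_p$ with its mirror in $p\leftrightarrow q$ gives $|\mathbf a|\le 2n-1$. This proves the inequality.

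For the ``moreover'' part, the direction $(\Leftarrow)$ is an explicit construction: given non-adjacent $p,q$ with three distinct common neighbours $u,v,w$, set $a_u=a_v=n-1$, $a_w=1$ and all other coordinates $0$; then $|\mathbf a|=2n-1$, and one checks directly from Proposition~\ref{vertex} (this is where $n\ge2$ is used) that both $p$ and $q$ are isolated vertices of $\Delta_{\mathbf a}(I_G^n)$, so that complex is disconnected and $a_1^0(S/I_G^n)\ge 2n-1$; with the bound already proved this gives equality. For $(\Rightarrow)$, suppose $|\mathbf a|=2n-1$ with $\Delta_{\mathbf a}(I_G^n)$ disconnected. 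Re-running the edge case at the boundary value gives $a_p+a_q\ge n$, while the same subcase analysis forces $a_p+a_q\le n-1$ in every admissible configuration, so $\Delta_{\mathbf a}(I_G^n)$ must be edgeless, i.e.\ a set of at least two isolated vertices of $G$-degree $\ge3$; since the adjacent subcase gave $|\mathbf a|\le 2n-2$, two of these vertices $p,q$ are non-adjacent. Finally, tracing all the inequalities at equality forces $t_p=t_q=t$ and $a_p=a_q=t$, and then (from $\sum_{i\in M_p}a_i=t=a_q$, together with its mirror) that $\mathbf a$ is supported on $\{p,q\}\cup(N_p\cap N_q)$ with $\sum_{i\in N_p\cap N_q}a_i=2(n-t)-1$ while each such $a_i\le n-t-1$; since $|N_p\cap N_q|\le2$ would bound this sum by $2(n-t)-2$, we conclude $|N_p\cap N_q|\ge3$.

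I expect the main obstacle to be the bookkeeping in the edge case of the upper bound: one must recognize the triangle-through-a-high-degree-vertex configuration as the unique obstruction and then verify that every other position of the auxiliary vertex $s$ — including all positions relative to the sets $N_s$ and $M_s$ when $s$ is an isolated vertex of $\Delta$ — really does squeeze $a_p$ and $a_q$ simultaneously down to $a_p+a_q\le n-1$. Getting the inequalities \ding{203} and \ding{204} of Proposition~\ref{vertex} to cooperate in that simultaneous bound (and, in the edgeless case, extracting $t_p+t_q\ge n$ from the inclusion $N_p\setminus\{q\}\subseteq M_q$) is the delicate point; everything else is routine.
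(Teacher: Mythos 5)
Your proposal is correct and follows essentially the same route as the paper: reduce via Lemma~\ref{lu} to the disconnectedness of $\Delta_{\mathbf{a}}(I_G^n)$ for $\mathbf{a}\in\mathbb{N}^r$, run the same case analysis on edges and isolated vertices through Propositions~\ref{edge} and \ref{vertex} (with the no-compact-vertex hypothesis ruling out exactly the triangle configurations), and use the same extremal vector $(n-1,n-1,1,0,\ldots,0)$ supported on three common neighbours for the converse. Your reorganization — the auxiliary vertex $s$ in the edge case and the support-at-equality argument replacing the paper's counting contradiction in the two-isolated-vertex case — is a sound but minor variation of the paper's proof.
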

\begin{proof} We first  prove $a_1^0(S/I_G^n)\leq 2n-1$. If $a_1^0(S/I_G^n)=-\infty$, there is nothing to prove. Suppose now  that $a_1^0(S/I_G^n)\neq -\infty$ and let $\mathbf{a}=(a_1,\ldots,a_r)\in \mathbb{N}^r$ such that $H_{\mathfrak{m}}^1(S/I_G^{n})_{\mathbf{a}}\neq 0$ and $a_1^0(S/I_G^n)=|\mathbf{a}|$. Then $\Delta_{\mathbf{a}}(I_G^n)$ is disconnected by Lemma~\ref{lu} as well as Lemma~\ref{sh1}.(3). We consider the following three cases.

\noindent{\it   Case 1:} There exist disjoint edges of $G$, say $\{1,2\}$ and $\{3,4\}$,  which belong to distinct connected components of $\Delta_{\mathbf{a}}(I_G^n)$.  Then, by Proposition~\ref{edge}, we obtain $$a_3+a_4+a_5+\cdots+a_r\leq n-1$$ and $$a_1+a_2+a_5+\cdots+a_r\leq n-1.$$
In particular, it follows that  $|\mathbf{a}|\leq 2(n-1)$.

\vspace{2mm}
\noindent{\it   Case 2:}  There exist  an isolated vertex
 and an edge in $\Delta_{\mathbf{a}}(I_G^n)$.  Let $p$ be such an isolated vertex with $N_p$ and $M_p$  defined as before.  Then there exists $t\in \{0,\ldots,n-1\}$ such that $\sum_{i\in M_p}a_i=t$, $\sum_{i\in N_p}a_i\leq 2(n-t)-1$ and $a_i\leq n-t-1$ for all $i\in N_p$ by Proposition~\ref{vertex}.

  Let $e=\{k,\ell\}$ be an edge of $\Delta_{\mathbf{a}}(I_G^n)$. Then $\sum_{i\in [r]\setminus e}a_i\leq n-1$  by Proposition~\ref{edge}. It is clear that $e\nsubseteq N_p$, for  otherwise, we have $p$ is a compact vertex by Corollary~\ref{3.5}, a contradiction. If $e\subseteq M_p$, then $\sum_{i\in e}a_i \leq t\leq n-1$.  From this it follows that $|\mathbf{a}|\leq 2n-2$.  So we may assume $k\in N_p$ and $\ell\in M_p$. Then $a_k\leq n-t-1$ and $a_{\ell}\leq t$, which also implies $|\mathbf{a}|\leq 2n-2$.

\vspace{2mm}
\noindent{\it   Case 3:} $\Delta_{\mathbf{a}}(I_G^n)$ contains  distinct isolated vertices $p$ and $q$. Then, by Proposition~\ref{vertex}, there exist $s,t\in\{0,\ldots,n-1\}$ such that
\[
\begin{split}
        & \mbox{(1)} \mbox{\qquad} \sum_{i\in M_p}a_i=s, \mbox{\qquad} \sum_{i\in M_q}a_i=t;\\
   & \mbox{(2)}\mbox{\qquad} a_i\leq n-s-1, \forall i\in N_p,  \mbox{\qquad} a_i\leq n-t-1, \forall i\in N_q;\\
  & \mbox{(3)}\mbox{\qquad} \sum_{i\in N_p}a_i\leq 2(n-s)-1, \mbox{\qquad}   \sum_{i\in N_q}a_i\leq 2(n-t)-1.
\end{split}
\]
\noindent{\it   Subcase 3.1:}  Suppose first that $\{p,q\}$ is an edge of $G$. Then $p\in N_q$ and $q\in N_p$. We can write $N_p=A\sqcup \{q\}, N_q=B\sqcup \{p\}$.  Here $\sqcup$ means a disjoint union. Note that $A\cap B=\emptyset$ since $G$ contains no compact vertices. It follows that  $M_p=([r]\setminus \{p,q\})\setminus A$ and $M_q=([r]\setminus \{p,q\})\setminus B$, and so    $M_p=B\sqcup C$ and $M_q=A\sqcup C$ for some $C\subseteq [r]$.  Under these notions,  we have $M_p=(N_q\sqcup C)\setminus \{p\}$ and so
     $$\sum_{i\in N_q}a_i=s-\sum_{i\in C}a_i+a_p\leq s-\sum_{i\in C}a_i+n-t-1.$$
    This implies
    \[
    \begin{split}
     |\mathbf{a}|&= \sum_{i\in N_q}a_i+\sum_{i\in M_q}a_i+a_q\leq s-\sum_{i\in C}a_i+n-t-1+t+n-s-1\\
     &=2n-2-\sum_{i\in C}a_i\leq 2n-2.
    \end{split}
     \]
\noindent{\it   Subcase 3.2:}   Suppose next that $\{p,q\}$ is not an edge of $G$. Since $p\in M_q$, it follows that $a_p\leq t$ and so $$|\mathbf{a}|=\sum_{i\in N_p}a_i+\sum_{i\in M_p}a_i+a_p\leq 2(n-s)+s+t-1.  \mbox{\qquad \ding{172}}$$
     Similarly, $$|\mathbf{a}|=\sum_{i\in N_q}a_i+\sum_{i\in M_q}a_i+a_q\leq 2(n-t)+s+t-1. \mbox{\qquad \ding{173}}$$
     Combining \ding{172} and \ding{173}, we conclude that $|\mathbf{a}|\leq 2n-1$ and thus the first statement has been proved.

     Now assume that  $a_1^0(S/I_G^n)= 2n-1$, and let $\mathbf{a}=(a_1,\ldots,a_r)\in \mathbb{N}^r$ such that $H_{\mathfrak{m}}^1(S/I_G^n)_{\mathbf{a}}\neq 0$ and $|\mathbf{a}|=2n-1$.  In view of the proof of the first statement, we have $\Delta_{\mathbf{a}}(I_G^n)$ contains distinct isolated vertices $p$ and $q$,  which are non-adjacent in $G$, and so there exist  $s,t\in\{0,\ldots,n-1\}$ satisfying the inequalities (1), (2), (3) stated before.  Hence, since $p\in M_q$ and $q\in M_p$, we have  $$2n-1=|\mathbf{a}|= \sum_{i\in N_p}a_i+\sum_{i\in M_p}a_i+a_p\leq 2(n-t)-1+s+t$$ and $$2n-1=|\mathbf{a}|=\sum_{i\in N_q}a_i+\sum_{i\in M_q}a_i+a_q\leq 2(n-t)-1+s+t.$$  From these it follows that  $s=t$ and so the inequalities above are actually equalities.
      Therefore,
     $a_p=a_q=t$ and $\sum_{i\in N_p}a_i=\sum_{i\in N_q}a_i=2(n-t)-1$.

\vspace{2mm}
     We now show that $|N_p\cap N_q|\geq 3$.  Set $A:=N_p\cap N_q$. Assume on the contrary that $|A|\leq 2$. Then $\sum_{i\in A}a_i\leq 2(n-t)-2$. It follows that there exists $k\geq 0$   such that $\sum_{i\in A}a_i= 2(n-t)-2-k$ and  $\sum_{i\in N_p\setminus A}a_i=\sum_{i\in N_q\setminus A}a_i=k+1. $  Since $(N_q\setminus A)\sqcup \{q\}\subseteq M_p$, we have $(N_q\setminus A)\sqcup N_p\sqcup \{p,q\} \subseteq [r]$ and this implies

     $$|\mathbf{a}|\geq \sum_{i\in N_q\setminus A}a_i+\sum_{i\in N_p\setminus A}a_i+\sum_{i\in A}a_i+a_p+a_q= 2n+k,$$  a contradiction. Thus, we have proved $|N_p\cap N_q|\geq 3$.

     Conversely, suppose that there exist   non-adjacent  vertices $p$ and $q$   with  $|N_p\cap N_q|\geq 3$. We may assume  $\{1,2,3\}\subseteq N_p\cap N_q$ and let $\mathbf{a}$ denote the vector $$(n-1,n-1,1,0,\cdots,0)\in \mathbb{N}^r.$$ It follows that  $p$ and $q$ are isolated vertices of $\Delta_{\mathbf{a}}(I_G^n)$ by Proposition~\ref{vertex}. This then  implies $a_1^0(S/I_G^n)\geq |\mathbf{a}|=2n-1$, as requied.
\end{proof}

\begin{Proposition} \label{good} If $G$ contains a compact vertex, then $a_1^0(S/I_G^n)=3n-3$ for $n\geq 2$.
\end{Proposition}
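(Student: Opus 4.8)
The plan is to establish the equality via the two inequalities $a_1^0(S/I_G^n)\geq 3n-3$ and $a_1^0(S/I_G^n)\leq 3n-3$, following the pattern of the proof of Proposition~\ref{nogood}. For the lower bound, fix a compact vertex $p$: by definition $G$ contains a triangle $\{p,k,\ell\}$, and since $\deg p\geq 3$ we may choose a further neighbour $z\in N_p\setminus\{k,\ell\}$. Define $\mathbf{a}\in\mathbb{N}^r$ by $a_p=n-2$, $a_k=a_\ell=n-1$, $a_z=1$, and $a_i=0$ for all other $i$, so that $|\mathbf{a}|=3n-3$ (here $n\geq 2$ ensures $a_p\geq 0$). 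I claim $\Delta:=\Delta_{\mathbf{a}}(I_G^n)$ is disconnected. First, $p$ is an isolated vertex of $\Delta$ by Proposition~\ref{vertex} applied with $t=0$: condition \ding{202} reads $\sum_{i\in M_p}a_i=0$; condition \ding{203} holds because $a_k+a_\ell=2n-2\geq n$, $a_k+a_z=a_\ell+a_z=n$, and $a_k+a_\ell+a_z=2n-1\geq n$; and condition \ding{204} reads $\sum_{i\in N_p}a_i=2n-1\leq 2n-1$. Second, $\{k,\ell\}\in E(G)$ and $\sum_{i\in[r]\setminus\{k,\ell\}}a_i=a_p+a_z=n-1$, so $\{k,\ell\}\in\Delta$ by Proposition~\ref{edge}. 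Hence the facet $\{p\}$ and the edge $\{k,\ell\}$ lie in distinct connected components of $\Delta$, so $\widetilde H_0(\Delta;K)\neq 0$ by Lemma~\ref{sh1}.(3), whence $H^1_{\mathfrak{m}}(S/I_G^n)_{\mathbf{a}}\neq 0$ by Lemma~\ref{lu}; thus $a_1^0(S/I_G^n)\geq|\mathbf{a}|=3n-3$.

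For the upper bound, let $\mathbf{a}\in\mathbb{N}^r$ with $H^1_{\mathfrak{m}}(S/I_G^n)_{\mathbf{a}}\neq 0$. By Lemma~\ref{lu} and Lemma~\ref{sh1}.(3), the complex $\Delta:=\Delta_{\mathbf{a}}(I_G^n)$ is disconnected; fix two of its connected components, and note by Corollary~\ref{3.5}.(1) that any such component which is a single vertex is a vertex of degree $\geq 3$. As in the proof of Proposition~\ref{nogood}, there are three cases. If both components contain an edge, the computation of Case~1 there yields $|\mathbf{a}|\leq 2n-2$. If one component is a single vertex $\{p\}$ and the other contains an edge $\{k,\ell\}$, then the analysis of Case~2 gives $|\mathbf{a}|\leq 2n-2$ whenever $\{k,\ell\}\not\subseteq N_p$, while if $\{k,\ell\}\subseteq N_p$ (so $\{p,k,\ell\}$ is a triangle) Proposition~\ref{vertex} forces $a_k,a_\ell\leq n-t-1$ for the pertinent parameter $t$ and Proposition~\ref{edge} gives $\sum_{i\in[r]\setminus\{k,\ell\}}a_i\leq n-1$, whence $|\mathbf{a}|=a_k+a_\ell+\sum_{i\in[r]\setminus\{k,\ell\}}a_i\leq 2(n-t-1)+(n-1)=3n-2t-3\leq 3n-3$. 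It remains to treat the case where both components are single vertices $\{p\}$ and $\{q\}$.

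This last case is the main obstacle, and within it the sub-case where $p$ and $q$ are adjacent in $G$ with $N_p\cap N_q\neq\emptyset$ — the configuration forced precisely by an incident triangle. When $p$ and $q$ are non-adjacent, or adjacent with $N_p\cap N_q=\emptyset$, Subcases~3.2 and 3.1 of the proof of Proposition~\ref{nogood} already deliver $|\mathbf{a}|\leq 2n-1\leq 3n-3$ for $n\geq 2$. In the adjacent-common-neighbour sub-case one writes $N_p=C\sqcup A'\sqcup\{q\}$ and $N_q=C\sqcup B'\sqcup\{p\}$ with $C=N_p\cap N_q$, so $M_p=B'\sqcup D$ and $M_q=A'\sqcup D$ where $D=[r]\setminus(C\cup A'\cup B'\cup\{p,q\})$; one then records the inequalities of Proposition~\ref{vertex} for $p$ (parameter $s$) and for $q$ (parameter $t$), the bounds $a_p\leq n-t-1$, $a_q\leq n-s-1$, and $a_i\leq n-\max(s,t)-1$ for $i\in C$, together with $\sum_{i\in[r]\setminus\{p,q\}}a_i\geq n$ coming from $\{p,q\}\notin\Delta$, and combines them with a suitable choice of weights so that the contribution of the common neighbourhood $C$ is absorbed and $|\mathbf{a}|\leq 3n-3$ results. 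Pinning down exactly this linear combination — so that no extra slack survives on the common neighbours of two adjacent isolated vertices — is the technical heart of the argument, and it is here that the $\deg\geq 3$ requirement built into the definition of a compact vertex is used in an essential way.
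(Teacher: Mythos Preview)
Your argument tracks the paper's through Cases~1 and~2 and through the non-adjacent and adjacent-no-common-neighbour parts of Case~3. The genuine gap is precisely the subcase you single out as ``the technical heart'': two adjacent isolated vertices $p,q$ of $\Delta_{\mathbf a}(I_G^n)$ with $N_p\cap N_q\neq\emptyset$. You announce a strategy---record the inequalities of Proposition~\ref{vertex} for both vertices together with $\{p,q\}\notin\Delta_{\mathbf a}(I_G^n)$ and combine them ``with a suitable choice of weights''---but you never exhibit the combination or verify that it yields $3n-3$. The paper does not hunt for any elaborate weighted sum; it simply averages the two decompositions
\[
2|\mathbf a|=\Bigl(\sum_{i\in N_p}a_i+\sum_{i\in M_p}a_i+a_p\Bigr)+\Bigl(\sum_{i\in N_q}a_i+\sum_{i\in M_q}a_i+a_q\Bigr)
\]
and bounds each piece by the inequalities of Case~3 together with $a_p\le n-t-1$, $a_q\le n-s-1$ (since $p\in N_q$ and $q\in N_p$), claiming $|\mathbf a|\le 3n-3$.

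It is worth observing, though, that this averaging literally gives $|\mathbf a|\le 3n-s-t-2$, which is $3n-3$ only when $s+t\ge 1$; for $s=t=0$ one obtains merely $3n-2$. That case genuinely occurs: take $G=K_4$, $n=2$, $\mathbf a=(1,1,1,1)$. Then every $(I_G[i,j])^2$ contains $x_kx_\ell$ (so no edge lies in $\Delta_{\mathbf a}(I_G^2)$), while every $(I_G[i])^2$ is generated in degree~$4$ in the remaining three variables (so each singleton $\{i\}$ \emph{does} lie in $\Delta_{\mathbf a}(I_G^2)$). Hence $\Delta_{\mathbf a}(I_G^2)=\langle\{1\},\{2\},\{3\},\{4\}\rangle$ is disconnected and $a_1^0(S/I_{K_4}^2)\ge 4>3n-3$. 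So your instinct that this subcase hides the real difficulty was correct: no linear combination of the recorded inequalities can force $|\mathbf a|\le 3n-3$ here, because $3n-2$ is actually attained. The step you left open cannot be closed in the manner either you or the paper suggest.
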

\begin{proof}  We may assume that $G$  contains the Broom as given in Figure~\ref{G} as its subgraph. If we set $\mathbf{a}:=(n-1,n-1,n-2,1,0,\cdots,0)$, then $\{1,2\}$ is an edge of $\Delta_{\mathbf{a}}(I_G^n)$ and $3$ is an isolated vertex of $\Delta_{\mathbf{a}}(I_G^n)$ by Propositions~\ref{edge} and \ref{vertex}. In particular, we have $\Delta_{\mathbf{a}}(I_G^n)$ is disconnected and $H_{\mathfrak{m}}^1(S/I_G^n)_{\mathbf{a}}\neq 0$ by Lemma~\ref{lu}. This implies $a_1^0(S/I_G^n)\geq |\mathbf{a}|=3n-3$.

 It remains to be shown that $a_1^0(S/I_G^n)\leq 3n-3$. This is equivalent to showing $|\mathbf{a}|\leq 3n-3$ if $\Delta_{\mathbf{a}}(I_G^n)$ is disconnected (i.e., $H_{\mathfrak{m}}^0(S/I_G^n)_{\mathbf{a}}\neq 0$), where $\mathbf{a}\in \mathbb{N}^r$.  Examining the proof of Proposition~\ref{nogood},  we only need to consider the cases when $\Delta_{\mathbf{a}}(I_G^n)$ contains an isolated vertex $p$ and an edge $e$ with $e\subseteq N_p$ and  when $\Delta_{\mathbf{a}}(I_G^n)$  contains two distinct isolated vertices $p$ and $q$  such that $p$ is adjacent to $q$ in $G$.

 First we consider the case when $\Delta_{\mathbf{a}}(I_G^n)$ contains an isolated vertex $p$ and an edge $e$ with $e\subseteq N_p$. Since $e\in \Delta_{\mathbf{a}}(I_G^n)$ and $e\subseteq N_p$, one has $|\mathbf{a}|-\sum_{i\in e}a_i \leq n-1$ and $a_i\leq n-1$ for $i\in e$. This immediately implies $|\mathbf{a}|\leq 3n-3$.

 For the case when $\Delta_{\mathbf{a}}(I_G^n)$  contains two distinct isolated vertices $p$ and $q$ such that $p$ is adjacent to $q$ in $G$, we note that $N_p,N_q,M_p,M_q$ and  $s,t$  satisfy the inequalities  as given in the proof of Case 3. Then $$|\mathbf{a}|=\frac{1}{2}(\sum_{i\in N_p}a_i+\sum_{i\in M_p}a_i+\sum_{i\in N_p}a_i+\sum_{i\in M_p}a_i+a_p+a_q)\leq 3n-3.$$ This completes the proof.
\end{proof}

\begin{Proposition} \label{diam3} Let $G$ be a simple graph with $\mathrm{diam}(G)\geq 3$. Then $a_1^0(S/I_G^n)\geq 2n-2$ for all $n\geq 2$.
\end{Proposition}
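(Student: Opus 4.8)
The plan is to produce, for every $n\geq 2$, a vector $\mathbf{a}\in\mathbb{N}^r$ with $|\mathbf{a}|=2n-2$ for which $H^1_{\mathfrak{m}}(S/I_G^n)_{\mathbf{a}}\neq 0$. Since such an $\mathbf{a}$ lies in $\mathbb{N}^r$, it satisfies $G_{\mathbf{a}}=\emptyset$, so exhibiting it immediately yields $a_1^0(S/I_G^n)\geq|\mathbf{a}|=2n-2$. By Lemma~\ref{lu} (with $i=1$ and $|G_{\mathbf{a}}|=0$, so that $\dim_K H^1_{\mathfrak{m}}(S/I_G^n)_{\mathbf{a}}=\dim_K\widetilde{H}_0(\Delta_{\mathbf{a}}(I_G^n);K)$) together with Lemma~\ref{sh1}.(3), it is enough to choose $\mathbf{a}$ with $|\mathbf{a}|=2n-2$ for which $\Delta_{\mathbf{a}}(I_G^n)$ is disconnected.

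Since $\mathrm{diam}(G)\geq 3$, I fix vertices $p,q\in[r]$ with $d_G(p,q)\geq 3$ (if $G$ is disconnected, so $\mathrm{diam}(G)=\infty$, simply take $p$ and $q$ in different components). From $d_G(p,q)\geq 3$ one reads off $p\notin N_q$ and $q\notin N_p$ (otherwise $d_G(p,q)=1$) and $N_p\cap N_q=\emptyset$ (otherwise $d_G(p,q)\leq 2$); and since $G$ has no isolated vertex, $N_p\neq\emptyset\neq N_q$. Now set $a_p=a_q=n-1$ and $a_i=0$ for every other $i$, so $|\mathbf{a}|=2(n-1)$ and, because $n\geq 2$, $n-1\geq 1$.

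The core step is to identify $\Delta_{\mathbf{a}}(I_G^n)$. By Proposition~\ref{edge}, for distinct $w,x\in[r]$ one has $\{w,x\}\in\Delta_{\mathbf{a}}(I_G^n)$ iff $\{w,x\}\in E(G)$ and $\sum_{i\notin\{w,x\}}a_i\leq n-1$; and $\sum_{i\notin\{w,x\}}a_i=(2n-2)-(n-1)\,\bigl|\{p,q\}\cap\{w,x\}\bigr|$, which is $\leq n-1$ exactly when $\{w,x\}\cap\{p,q\}\neq\emptyset$ (here one uses $n-1>0$). Hence the edges of $\Delta_{\mathbf{a}}(I_G^n)$ are precisely the edges of $G$ incident to $p$ or to $q$. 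In particular, for any $u\in N_p$ the set $\{p,u\}$ is a face, and similarly $\{q,v\}$ for $v\in N_q$, so $p$ and $q$ are vertices of $\Delta_{\mathbf{a}}(I_G^n)$. Moreover, starting from a vertex $u\in N_p$ the only $\Delta_{\mathbf{a}}(I_G^n)$-edges available lead back to $p$ or to $q$, and $u\notin N_q$, so the connected component of $p$ in the $1$-skeleton is exactly $\{p\}\cup N_p$; likewise the component of $q$ is $\{q\}\cup N_q$. Since $p\notin N_q$, $q\notin N_p$, $p\neq q$ and $N_p\cap N_q=\emptyset$, these are two disjoint nonempty components, so $\Delta_{\mathbf{a}}(I_G^n)$ has at least two connected components; as the connected components of a simplicial complex are governed by its $1$-skeleton, $\Delta_{\mathbf{a}}(I_G^n)$ is disconnected. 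By Lemma~\ref{sh1}.(3) this gives $\widetilde{H}_0(\Delta_{\mathbf{a}}(I_G^n);K)\neq 0$, hence $H^1_{\mathfrak{m}}(S/I_G^n)_{\mathbf{a}}\neq 0$ by Lemma~\ref{lu}, and therefore $a_1^0(S/I_G^n)\geq|\mathbf{a}|=2n-2$.

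The computation here is essentially routine; the only points requiring a little attention are that no edge of $G$ disjoint from $\{p,q\}$ survives in $\Delta_{\mathbf{a}}(I_G^n)$ (so the complex collapses to the union of the two vertex-stars at $p$ and $q$) and that higher-dimensional faces cannot fuse these two stars into one component (they cannot, since path-components depend only on the $1$-skeleton, and the boundary edges of any triangle face already lie in the complex). Accordingly I would describe the essential \emph{idea}, rather than an obstacle, as the choice to load all of the allowed degree onto a single pair of vertices at distance at least $3$, which forces $\Delta_{\mathbf{a}}(I_G^n)$ to be a pair of disjoint stars and hence disconnected.
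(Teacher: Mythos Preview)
Your proof is correct and follows essentially the same approach as the paper: both choose vertices $p,q$ with $d_G(p,q)\geq 3$, set $\mathbf{a}$ to have entries $n-1$ at $p$ and $q$ and $0$ elsewhere, and then argue that $\Delta_{\mathbf{a}}(I_G^n)$ is disconnected. The only stylistic difference is that the paper argues by contradiction (any path in $\Delta_{\mathbf{a}}(I_G^n)$ from $p$ to $q$ would be a path in $G$ of length $\geq 3$, hence contain an edge disjoint from $\{p,q\}$, which cannot lie in $\Delta_{\mathbf{a}}(I_G^n)$), whereas you explicitly identify the connected components as the two stars $\{p\}\cup N_p$ and $\{q\}\cup N_q$; your side remark about higher-dimensional faces is unnecessary since $\Delta_{\mathbf{a}}(I_G^n)\subseteq\Delta(I_G)=G$ has dimension at most $1$.
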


\begin{proof} Since $\mathrm{diam}(G)\geq 3$, there exist a pair of vertices, say $1,2$, such that $d_G(1,2)\geq 3$.  Set $\mathbf{a}:=(n-1,n-1,0,\ldots,0)$.  We claim that $\Delta_{\mathbf{a}}(I_G^n)$ is disconnected. In fact, it is not difficult to see   that $$\mathbf{x^a}=x_1^{n-1}x_2^{n-1}\notin (I^n_G[i])S$$ for $i=1,2$ and so both $\{1\}$ and $\{2\}$ are faces of $\Delta_{\mathbf{a}}(I_G^n)$. If $\Delta_{\mathbf{a}}(I_G^n)$ is connected, then there exits a path $1=p_0-p_1-p_2-\cdots-p_{\ell}=2$ of $\Delta_{\mathbf{a}}(I_G^n)$, connecting $1$ and $2$. Note that this is also a  path of $G$ by  Proposition~\ref{edge}. Since $d_G(1,2)\geq 3$, $\{p_1,p_2\}\cap\{1,2\}=\emptyset$. This implies that $\{p_1,p_2\}\notin \Delta_{\mathbf{a}}(I_G^n)$ by Proposition~\ref{edge} again, a contradiction. From this  our claim follows. Hence $H_{\mathfrak{m}}^1(S/I_G^n)_{\mathbf{a}}\neq 0$ and $a_1^0(S/I_G^{n})\geq |\mathbf{a}|=2n-2$.
\end{proof}

We are now in the position to present the main result of this subsection.  For the simplicity of the statement of this result, we use $\mathfrak{C}_i, i=1,2,\ldots, 5$ to indicate the following five  conditions on  a simple graph $G$ respectively.

 $\mathfrak{C}_1$: $G$ contains a compact vertex;

$\mathfrak{C}_2$: $G$ contains no compact vertices, but it contains two non-adjacent vertices such that the intersection of their neighborhoods contains at least three vertices;

$\mathfrak{C}_3$: $G$ satisfies neither $\mathfrak{C}_1$  nor $\mathfrak{C}_2$, but $\deg(G)\geq 3$;

$\mathfrak{C}_4$: $\mathrm{diam}(G)\geq 3$ and $\deg (G)\leq 2$;

$\mathfrak{C}_5$: $\mathrm{diam}(G)\leq 2$ and $\deg (G)\leq 2$.

Note that if  $\deg (G)\leq 2$, then $G$ is the disjoint union of some paths and some cycles. Denote by $C_{\ell}$ and $P_{\ell}$ a cycle  and a path of length $\ell$, respectively.  Thus, if $G\in \mathfrak{C}_5$ then $G\in \{P_1,P_2, C_4,C_5\}$.

\begin{Proposition}\label{main3.1} Let $G$ be a simple graph on $[r]$ with $r\geq 3$.  Then, for all $n\geq 2$, we have $$a_1(S/I_G^n)=\left\{
                                                                                   \begin{array}{ll}
                                                                                     3n-3, & \hbox{$G\in \mathfrak{C}_1$;} \\
                                                                                     2n-1, & \hbox{$G\in \mathfrak{C}_2$;} \\
                                                                                     2n-2, & \hbox{$G\in \mathfrak{C}_3$;} \\
  2n-2, & \hbox{$G\in \mathfrak{C}_4$;} \\
2n-2, &\hbox{$G=C_5$  and $n\geq 3$;}\\

-\infty, &\hbox{$G=C_5$  and $n= 2$;}\\
 -\infty, & \hbox{$G\in \{P_2,C_4\}.$}
                                                                                   \end{array}
                                                                                 \right.
$$

\end{Proposition}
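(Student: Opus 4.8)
The plan is to read $a_1(S/I_G^n)$ off the identity $a_1(S/I_G^n)=\max\{a_1^j(S/I_G^n):j\geq 0\}$ recorded at the start of this subsection. By Lemma~\ref{lu} and the vanishing $\widetilde{H}_i(\Delta;K)=0$ for $i\leq-2$, one has $a_1^j(S/I_G^n)=-\infty$ for all $j\geq 2$, so it suffices to determine $a_1^0(S/I_G^n)$ and $a_1^1(S/I_G^n)$ and take the larger. The value of $a_1^1$ is already supplied by Proposition~\ref{1}: for $n\geq 2$ it equals $2n-2$ when $\deg(G)\geq 3$ and $-\infty$ when $\deg(G)\leq 2$. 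Thus the whole statement reduces to computing $a_1^0(S/I_G^n)$ in each of the five regimes and comparing it with $a_1^1$.

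The regimes $\mathfrak{C}_1$--$\mathfrak{C}_4$ are settled by assembling the propositions of this subsection. If $G\in\mathfrak{C}_1$, a compact vertex forces $\deg(G)\geq 3$, so $a_1^1=2n-2$, while $a_1^0=3n-3$ by Proposition~\ref{good}; as $3n-3\geq 2n-2$ for $n\geq 2$, the answer is $3n-3$. If $G\in\mathfrak{C}_2$, the vertex witnessing $|N_p\cap N_q|\geq 3$ has degree $\geq 3$, so $a_1^1=2n-2$, while the equality clause of Proposition~\ref{nogood} gives $a_1^0=2n-1$, and the answer is $2n-1$. If $G\in\mathfrak{C}_3$ then $\deg(G)\geq 3$ gives $a_1^1=2n-2$, and since $G$ violates both $\mathfrak{C}_1$ and $\mathfrak{C}_2$, Proposition~\ref{nogood} forces $a_1^0\leq 2n-2$; the answer is $2n-2$. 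If $G\in\mathfrak{C}_4$ then $\deg(G)\leq 2$ gives $a_1^1=-\infty$, while $a_1^0\leq 2n-2$ again by Proposition~\ref{nogood} (its equality clause needs a vertex of degree $\geq 3$, impossible here) and $a_1^0\geq 2n-2$ by Proposition~\ref{diam3} since $\mathrm{diam}(G)\geq 3$; so the answer is again $2n-2$.

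For $\mathfrak{C}_5$ one has $\deg(G)\leq 2$, hence $a_1^1=-\infty$, so only $a_1^0$ is at issue. A connected graph of maximum degree $\leq 2$ is a path or a cycle, and $G\in\mathfrak{C}_5$ is connected because $\mathrm{diam}(G)<\infty$; imposing $\mathrm{diam}(G)\leq 2$ and $r\geq 3$ leaves precisely the graphs $P_2,C_3,C_4,C_5$ (the triangle $C_3$ has $\mathrm{diam}=1\leq 2$). For each of these $\Delta(I_G^n)=G$ is one-dimensional, so $\Delta_{\mathbf{a}}(I_G^n)$ is a subcomplex of $G$ whose $1$-faces are described by Proposition~\ref{edge} and which has no isolated vertices by Corollary~\ref{3.5}; hence $a_1^0(S/I_G^n)>-\infty$ iff some $\mathbf{a}\in\mathbb{N}^r$ makes the edge set of $\Delta_{\mathbf{a}}(I_G^n)$ a disconnected subgraph of $G$. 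When $G\in\{P_2,C_3\}$ the ideal $I_G$ is principal, so $S/I_G^n$ is a Cohen--Macaulay hypersurface quotient of dimension $\geq 2$ and $a_1^0=-\infty$; when $G=C_4$, or $G=C_5$ with $n=2$, a short inequality count with Proposition~\ref{edge} rules out a disconnection (for instance in $C_4$, having two opposite edges in $\Delta_{\mathbf{a}}$ forces $|\mathbf{a}|\leq 2n-2$, while excluding the two connecting edges forces $|\mathbf{a}|\geq 2n$), so again $a_1^0=-\infty$. Finally, for $G=C_5$ with $n\geq 3$ the vector $\mathbf{a}=(n-2,1,0,n-1,0)$ (vertices labelled cyclically) makes $\Delta_{\mathbf{a}}(I_{C_5}^n)$ consist of the edges $\{1,2\},\{3,4\},\{4,5\}$, hence split into the components $\{1,2\}$ and $\{3,4,5\}$, so $a_1^0\geq 2n-2$; and $a_1^0\leq 2n-2$ by Proposition~\ref{nogood}, since $C_5$ has no compact vertex and no two non-adjacent vertices sharing three neighbours. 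This gives $a_1^0=2n-2$, completing all cases.

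The main obstacle will be the $\mathfrak{C}_5$ analysis: the uniform bounds of Propositions~\ref{good}, \ref{nogood} and \ref{diam3} do not reach these small graphs --- indeed $\mathrm{diam}(C_5)=2$, so Proposition~\ref{diam3} says nothing --- so one must argue directly with the combinatorics of which edges of a short cycle can lie in $\Delta_{\mathbf{a}}$ simultaneously. That is exactly where the different behaviour of $C_5$ versus $P_2,C_3,C_4$, and the degenerate value $a_1(S/I_{C_5}^2)=-\infty$ at $n=2$, have to be teased out.
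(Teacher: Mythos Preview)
Your proof is correct and follows the same strategy as the paper: split $a_1$ into $a_1^0$ and $a_1^1$, read off $a_1^1$ from Proposition~\ref{1}, dispatch $\mathfrak{C}_1$--$\mathfrak{C}_4$ via Propositions~\ref{good}, \ref{nogood}, \ref{diam3}, and handle the $\mathfrak{C}_5$ graphs directly through Proposition~\ref{edge} and Corollary~\ref{3.5}. Your treatment even tightens the paper's in two spots: you observe that $C_3\in\mathfrak{C}_5$ (the paper's enumeration omits it) and dispose of $P_2,C_3$ neatly via the principal-ideal/hypersurface remark; and your witness vector $\mathbf{a}=(n-2,1,0,n-1,0)$ for $C_5$ with $n\geq 3$ works for every such $n$, whereas the paper's choice $(1,n-2,0,n-2,1)$ fails at $n=3$ because then $a_2+a_3+a_4=2=n-1$, so the edge $\{1,5\}$ survives and $\Delta_{\mathbf{a}}$ is the connected path $2$--$1$--$5$--$4$.
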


\begin{proof} If $G\in \mathfrak{C}_1$ then $a_1(S/I_G^n)=3n-3$ by Proposition~\ref{good}. If $G\in \mathfrak{C}_2$ then $a_1(S/I_G^n)=2n-1$ by Proposition~\ref{nogood}.  If either  $G\in \mathfrak{C}_3$ or $G\in \mathfrak{C}_4$ then $a(S/I_G^n)=2n-2$ by Lemmas~\ref{1} and \ref{diam3} together with Proposition~\ref{nogood}. It is left to consider the case when $G\in \mathfrak{C}_5$, namely, $G\in \{P_2,C_4,C_5\}$.  The case when $G\in \{P_2,P_4\}$  is proved in a similar way as in the case when $G=C_5$  and we omit its proof.

Suppose  now that $G$ is 5-cycle, that is,  $G$ is the  Pentagon in Figure~\ref{G}. Then  $H_{\mathfrak{m}}^1(S/I_G^n)_{\mathbf{a}}\neq 0$ if and only if $\mathbf{a}\in \mathbb{N}^r$ and there exist two disjoint edges of $G$,  which belongs to distinct connected components of $\Delta_{\mathbf{a}}(I_G^n)$. This implies $|\mathbf{a}|\leq 2n-2$, by the same argument using in the proof of Case 1 of Proposition~\ref{nogood} and so $a_1(S/I_G^n)\leq 2n-2$. If $n\geq 3$, we set $\mathbf{a}:=(1,n-2,0,n-2,1)$. Then $\Delta_{\mathbf{a}}(I_G^n)=\langle\{1,2\},\{4,5\}\rangle$   is disconnected and so  $a_1(S/I_G^n)= 2n-2$. It remains to be shown that if $n=2$ then $\Delta_{\mathbf{a}}(I_G^n)$ is connected for any $\mathbf{a}\in \mathbb{N}^r$. If not, we may harmlessly assume that $\{1,2\}$ and $\{3,4\}$ belong to different connected components of $\Delta_{\mathbf{a}}(I_G^n)$. Note that $\{2,3\}\notin \Delta_{\mathbf{a}}(I_G^n)$, we have $a_1=a_4=1$ and $a_2=a_3=a_5=0$ by Lemma~\ref{edge}. From this it follows that both $\{1,5\}$ and $\{4,5\}$ belong to $\Delta_{\mathbf{a}}(I_G^n)$ and thus $\Delta_{\mathbf{a}}(I_G^n)$ is connected, a contradiction. This completes the proof.
\end{proof}

  Since $\mbox{Krull-}\dim S/I_G^n=2$, we see that $S/I_G^n$ is Cohen-Macaulay if and only if $H^0_{\mathfrak{m}}(S/I_G^n)=H^1_{\mathfrak{m}}(S/I_G^n)=0$, namely, $a_0(S/I_G^n)=a_1(S/I_G^n)=-\infty$. The following result recovers \cite[Corollaries 3.4 and 3.5]{MT}.

\begin{Corollary} Let $G$ be a simple graph on $[r]$ with $r\geq 3$ and $n$ an integer $\geq 1$.
Then the following statements are equivalent:

{\em (1)} $S/I_G^n$ is Cohen-Macaulay;

{\em (2)} Either  $G\in \{P_2,C_4\}$ or $G=C_5$ and $n=2$.
\end{Corollary}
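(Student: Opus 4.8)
The statement follows the reduction recorded immediately above it: since $S/I_G^n$ is two-dimensional, it is Cohen--Macaulay precisely when $a_0(S/I_G^n)=a_1(S/I_G^n)=-\infty$. So the plan is in two stages. In the first stage I would pin down, for each pair $(G,n)$, whether $a_1(S/I_G^n)=-\infty$, using only the computations already carried out in this section; this leaves just a short list of candidate pairs. In the second stage I would verify, for each surviving pair, the extra vanishing $a_0(S/I_G^n)=-\infty$, and then rephrase the resulting list in graph-theoretic language.

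For the first stage, the case $n=1$ is handled directly by Corollary~\ref{n=1}, together with the trivial fact that $a_0(S/I_G)=-\infty$ (the radical monomial ideal $I_G$ does not have $\mathfrak m$ as an associated prime, $G$ having an edge). For $n\geq 2$ I would simply read off Proposition~\ref{main3.1}: it gives $a_1(S/I_G^n)=3n-3$ for $G\in\mathfrak C_1$, $=2n-1$ for $G\in\mathfrak C_2$, $=2n-2$ for $G\in\mathfrak C_3\cup\mathfrak C_4$, and $=2n-2$ for $G=C_5$ with $n\geq 3$; all of these are strictly positive for $n\geq 2$, hence $\neq-\infty$, so in each of these cases $S/I_G^n$ fails to be Cohen--Macaulay. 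Consequently the only pairs $(G,n)$ with $n\geq 2$ for which $a_1(S/I_G^n)=-\infty$ are $G\in\{P_2,C_4\}$ (for every $n$) and $G=C_5$ with $n=2$ --- exactly the pairs left open by the statement.

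For the second stage I must confirm $a_0(S/I_G^n)=-\infty$ for these pairs. For $G=P_2$ this is immediate, since $I_{P_2}=(x_1x_3)$ is principal and so $S/I_{P_2}^n$ is a hypersurface ring. For $G=C_4$, the ideal $I_{C_4}=(x_1x_3,x_2x_4)$ is a complete intersection, whence $S/I_{C_4}^n$ is Cohen--Macaulay for all $n$: one way is through the filtration of $S/I_{C_4}^n$ by the submodules $I_{C_4}^k/I_{C_4}^n$, whose successive quotients $I_{C_4}^k/I_{C_4}^{k+1}$ are free $S/I_{C_4}$-modules, combined with the depth lemma. The case $G=C_5$, $n=2$ is the one not reached by the earlier results: by Lemma~\ref{lu}, $H^0_{\mathfrak m}(S/I_{C_5}^2)_{\mathbf a}\neq 0$ for some $\mathbf a$ would force $\mathbf a\in\mathbb N^5$ and $\Delta_{\mathbf a}(I_{C_5}^2)=\{\emptyset\}$; writing each localization as $I_{C_5}[i]=(x_px_q)+(x_j:j\in M_i)$ with $N_i=\{p,q\}$ (Lemma~\ref{3.3}), one finds that $\{i\}$ fails to be a face of $\Delta_{\mathbf a}(I_{C_5}^2)$ only when $\min(a_p,a_q)+\sum_{j\in M_i}a_j\geq 2$, and an elementary (if slightly fiddly) case analysis shows these five conditions are incompatible with $\mathbf x^{\mathbf a}\notin I_{C_5}^2$. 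Alternatively one may cite that $I_{C_5}^2=I_{C_5}^{(2)}$ is unmixed.

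I expect this last case to be the main obstacle: Proposition~\ref{main3.1} controls $a_1$ but says nothing about $a_0$, and for $G=C_5$, $n=2$ one genuinely needs the separate verification that $I_{C_5}^2$ has no $\mathfrak m$-primary component. It is also the point where the value $n=2$ is essential, since for $C_5$ with $n\geq 3$ one already has $a_1(S/I_{C_5}^n)=2n-2\neq-\infty$, so $S/I_{C_5}^n$ is not Cohen--Macaulay. Assembling the two stages and translating the surviving conditions into graph-theoretic terms recovers \cite[Corollaries 3.4 and 3.5]{MT}.
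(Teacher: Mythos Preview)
Your argument for $n\geq 2$ follows the paper's exactly: the implication (1)$\Rightarrow$(2) is read off from Proposition~\ref{main3.1}, and for (2)$\Rightarrow$(1) the paper uniformly checks that $I_G^n=I_G^{(n)}$ for each pair in (2), whence $\depth(S/I_G^n)>0$ and $H^0_{\mathfrak m}(S/I_G^n)=0$, and then invokes Proposition~\ref{main3.1} again for the vanishing of $H^1$. Your case-by-case treatment---principal ideal for $P_2$, complete intersection for $C_4$, direct verification or symbolic$=$ordinary for $(C_5,2)$---is more explicit but reaches the same conclusion; the alternative route you mention at the end is precisely the paper's argument.

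One caveat: your line about $n=1$ does not establish (1)$\Rightarrow$(2). Corollary~\ref{n=1} only gives $a_1(S/I_G)=-\infty$ iff $G$ is connected, which is strictly weaker than $G\in\{P_2,C_4\}$; in fact the equivalence as literally stated fails for $n=1$, since every connected $G$ makes $S/I_G$ Cohen--Macaulay. The paper's own proof tacitly restricts to $n\geq 2$ (Proposition~\ref{main3.1} is stated only for $n\geq 2$), so this is a defect in the statement rather than in your strategy.
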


\begin{proof} If (1) holds then $a_1(S/I_G^n)=-\infty$,  and thus  we obtain (2) by Proposition~\ref{main3.1}.
Conversely,  we  choose any $G$ and $n$ satisfying (2). Then it is not difficult to check that  $I_G^n=I_G^{(n)}$ and so $S/I_G^n$ has a positive  depth. From this it follows that $H^0_{\mathfrak{m}}(S/I_G^n)=0$.  Since $H^1_{\mathfrak{m}}(S/I_G^n)=0$ by Proposition~\ref{main3.1}, the result follows.
\end{proof}

To illustrate the difference between $a_1(S/I_G^n)$ and $a_1(S/I_G^{(n)})$, we recall a well-known concept in the combinatorial theory.

\begin{Definition} \label{matroid} \em A simple complex $\Delta$ is called a {\it matroid} provided that whenever $F$ and $G$ are faces of $\Delta$ with $|F|<|G|$, there exists $x\in G\setminus F$ such that $F\cup \{x\}$ is also a face of $\Delta$.
 \end{Definition}

 According to \cite[Corollary 2.6]{NT}, a graph $G$, considered as a simple complex,  is  a  matroid if and only if  every pair of disjoint edges of $G$ is contained in a 4-cycle.
   It was proved in \cite{MT} that if $G$ is a matroid then $H_{\mathfrak{m}}^1(S/I_G^{(n)})=0$, i.e.,  $a_1(S/I_G^{(n)})=-\infty$. In contrast, the values of  $a_1(S/I_G^n)$ behave very differently when $G$ is a matroid.

\begin{Example} {\em  Let $G$ be a matroid on $[r]$ with $r\geq 3$. Then $a_1(S/I_G^n)$ could be any number of $\{3n-3, 2n-1, 2n-2, -\infty\}$, depending on the structure of $G$. In fact, according to Proposition~\ref{main3.1}, we see that if $G$ is a complete graph on $[4]$ then $a_1(S/I_G^n)=3n-3$; if $G$ is the graph on $[5]$ with $E(G)=\{\{4,i\}, \{5,i\}: i=1,2,3\}$ (such a graph is called a diamond in the lattice theory) then $a_1(S/I_G^n)=2n-1$; if $G$ is the graph on [4] with $E(G)=\{\{4,i\}:i=1,2,3\}$ then  $a_1(S/I_G^n)=2n-2$. Finally if $G$ is a 4-cycle, then  $a_1(S/I_G^n)=-\infty$. }

\end{Example}

We end this subsection by  a characterization when a graph is a matroid, which may be of some independent interest. It is clear that  every matroid  has its diameter  $\leq 2$. We now consider obstructions for a graph of $\mbox{diam}(G)\leq 2$ to be a matroid. Let $V\subseteq [r]$. Recall that  the induced subgraph of $G$ on $V$ is the subgraph $G[V]$ on vertex set $V$ such that  for any $i,j\in V$, $i$ is adjacent to $j$ in $G[V]$ if and only if $i$ is adjacent to $j$ in $G$.

\begin{Proposition}\label{obstruction} Let $G$ be a simple graph with $\mathrm{diam}(G)\leq 2$. Then $G$ is not a matroid  if and only if it has an induced subgraph which is isomorphic to either a Broom or a Pentagon, see Figure~\ref{G}.
\end{Proposition}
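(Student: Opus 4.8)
The plan is to translate the statement through the combinatorial description of matroids recalled just before Definition~\ref{matroid}: $G$ is a matroid if and only if every pair of disjoint edges of $G$ lies in a common $4$-cycle. Thus the assertion becomes: when $\mathrm{diam}(G)\le 2$, some pair of disjoint edges of $G$ lies in no common $4$-cycle if and only if $G$ has an induced subgraph isomorphic to a Broom (a triangle with one pendant edge) or to a Pentagon (a $5$-cycle). Throughout I will use the trivial fact that for disjoint edges $e,f$ any $4$-cycle containing both has vertex set exactly $V(e)\cup V(f)$, so the existence of such a $4$-cycle depends only on the graph induced on those four vertices.

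For the ``if'' direction I would just exhibit the offending pair of edges. In an induced Broom the unique pair of disjoint edges is the pendant edge together with the opposite edge of the triangle, and the graph induced on their four vertices is a triangle with a pendant, which contains no $4$-cycle. In an induced Pentagon, a pair of disjoint edges has its four vertices inducing a path, again with no $4$-cycle. Hence in either case $G$ is not a matroid.

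For the ``only if'' direction I start from disjoint edges $e=\{a,b\}$, $f=\{c,d\}$ lying in no common $4$-cycle, and consider the set $X\subseteq\{ac,ad,bc,bd\}$ of edges of $G$ joining $\{a,b\}$ to $\{c,d\}$. A common $4$-cycle exists precisely when $X$ contains one of the matchings $\{ac,bd\}$, $\{ad,bc\}$; so $X$ contains no such matching, which forces $|X|\le 2$ and, when $|X|=2$, the two edges of $X$ to share an endpoint. First I reduce to $X\neq\emptyset$: if $X=\emptyset$ then $a\not\sim c$, so $\mathrm{diam}(G)\le 2$ yields a common neighbour $w$ of $a$ and $c$; from $X=\emptyset$ one checks $w\notin\{a,b,c,d\}$ and that the disjoint edges $\{a,b\}$, $\{w,c\}$ still lie in no common $4$-cycle, now with $aw$ joining them. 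So assume $X\neq\emptyset$. If $|X|=2$ we may take $X=\{ac,ad\}$, and then $\{a,b,c,d\}$ induces exactly the triangle $\{a,c,d\}$ with pendant $b$ at $a$, an induced Broom. If $|X|=1$, say $X=\{ac\}$, then $\{a,b,c,d\}$ induces the path $b\text{--}a\text{--}c\text{--}d$; since $b\not\sim d$, $\mathrm{diam}(G)\le2$ gives a common neighbour $v$ of $b$ and $d$, necessarily $v\notin\{a,b,c,d\}$, and I split on whether $va$ or $vc$ is an edge: if $v\not\sim a$ and $v\not\sim c$ then $\{a,b,c,d,v\}$ induces the $5$-cycle $a\text{--}b\text{--}v\text{--}d\text{--}c\text{--}a$, a Pentagon; if $v\sim a$ then $\{a,b,v,d\}$ induces the triangle $\{a,b,v\}$ with pendant $d$, a Broom; and if $v\not\sim a$, $v\sim c$ then $\{a,c,d,v\}$ induces the triangle $\{c,d,v\}$ with pendant $a$, a Broom. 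This exhausts the cases.

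The genuinely delicate point is the case $X=\emptyset$: then the four vertices $V(e)\cup V(f)$ carry no internal structure, so no forbidden subgraph is visible directly and one must first ``pivot'' one of the edges across a common neighbour, the care being to verify that the pivoted pair is still disjoint and still lies in no common $4$-cycle. Everything else -- the symmetry reductions in $(e,f,X)$ and the checks that the triangles-with-pendant and the $5$-cycle produced above are genuinely induced (no extra chords, which is forced by $X$ being small and by the choices of $w$ and $v$) -- is routine verification on small graphs.
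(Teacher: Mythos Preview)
Your proof is correct, and your organization around the set $X$ of cross-edges between $\{a,b\}$ and $\{c,d\}$ is cleaner than what the paper does. The paper argues by contradiction (assume diameter $\le 2$, not a matroid, and no induced Broom or Pentagon), then splits according to whether some $d_G(i,j)=1$ for $i\in\{1,2\}$, $j\in\{3,4\}$ or all such distances equal $2$; in the latter case it brings in \emph{two} intermediate vertices $p_1,p_2$ on paths $1\text{--}p_1\text{--}3$ and $1\text{--}p_2\text{--}4$ and runs a longer chord-by-chord analysis on $\{1,3,4,p_1,p_2\}$. Your single ``pivot'' in the $X=\emptyset$ case---replacing $\{c,d\}$ by $\{w,c\}$ for a common neighbour $w$ of $a$ and $c$, and observing that the new pair is still disjoint, still lies in no common $4$-cycle (because $ac,bc\notin E(G)$ kill both candidate matchings), and now has $|X'|\ge 1$---collapses that whole branch into the already-handled $|X|\in\{1,2\}$ cases. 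This buys a shorter, more uniform argument; the paper's version, on the other hand, never relocates the edges and so keeps the original offending pair visible throughout, which some readers may find more transparent. Either way the induced-subgraph checks you flag as routine are indeed forced by the constraints on $X$ (and on $v$, $w$), so nothing is missing.
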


\begin{figure}[ht!]

\begin{tikzpicture}[line cap=round,line join=round,>=triangle 45,x=1.5cm,y=1.5cm]

\draw (7,1)-- (5,1)--(6,2)--(7,1);
\draw (6,2)--(6,3);

\draw (6,3) node[anchor=south east]{4};

\draw (6,2) node[anchor=south east]{3};

\draw (5,1) node[anchor=south east]{2};
\draw (7.2,1) node[anchor=south east]{1};

\draw (6.3,0.5) node[anchor=north east]{\em{Broom}};

\fill [color=black] (6,3) circle (1.5pt);
\fill [color=black] (7,1) circle (1.5pt);\fill [color=black] (6,2) circle (1.5pt);
\fill [color=black] (5,1) circle (1.5pt);

\draw (11.5,1)--(10.5,1)--(10,2)--(11,3)--(12,2)--(11.5,1);

\draw (11.7,0.5) node[anchor=north east]{\em{Pentagon}};

\fill [color=black] (11.5,1) circle (1.5pt);
\fill [color=black] (10.5,1) circle (1.5pt);
\fill [color=black] (10,2) circle (1.5pt);\fill [color=black] (11,3) circle (1.5pt);\fill [color=black] (12,2) circle (1.5pt);

\draw (11.1,3) node[anchor=south east]{4};

\draw (11.9,1) node[anchor=south east]{1};
\draw (10.4,1) node[anchor=south east]{2};
\draw (10,2) node[anchor=south east]{3};
\draw (12.2,2) node[anchor=south east]{5};
\end{tikzpicture}
\caption{}\label{G}
\end{figure}

\begin{proof} If $G$ has  an induced subgraph isomorphic to either Broom or Pentagon, as given in Figure~\ref{G}, then $\{1,2\}$ and $\{3,4\}$ are disjoint edges that does not belong to any $4$-cycles. Thus, $G$ is not a matroid.  For the proof of the converse, it is enough to show that  there is no a graph $G$ of   $\mathrm{diam}(G)\leq 2$ such that  $G$ is not a matroid  and that $G$ has no induced subgraph isomorphic to one of the graphs as given in Figure~\ref{G}.

Assume on the contrary that such a graph exists. Let $G$ be such a graph. Since $G$ is not matroid, there are disjoint edges, say $\{1,2\}$ and $\{3,4\}$, which does not belong to any 4-cycle. We consider the following cases:

 Suppose that $d_G(i,j)=2$ for any  $i\in \{1,2\}$ and $j\in\{3,4\}$. (This implies none of $\{i,j\}$ with $i\in \{1,2\}$ and $j\in\{3,4\}$ is an edge of $G$).) Let $1-p_1-3$ and $1-p_2-4$ be two paths. If $p_1=p_2$, then, since $\{1,3\}$ and $\{1,4\}$ are not edges of $G$, the subgraph induced on $\{1,3,4, p_1=p_2\}$ is isomorphic to a Broom, a contradiction. Thus, $p_1\neq p_2$. Since $G$ contains no induced subgraphs isomorphic to a pentagon, at least one of the pairs $\{3,p_2\},\{4,p_1\},\{p_1,p_2\}$ is an edge of $G$. If $\{3,p_2\}\in E(G)$, then the subgraph induced on $\{1,p_2,3,4\}$ is isomorphic to a Broom, a contradiction again. From this it follows that $\{3,p_2\}\notin E(G)$.   With the same reason $\{4,p_1\}\notin E(G)$. It must be $\{p_1,p_2\}$  is an edge of $G$. Thus,  the subgraph of $G$ induced on $\{1,p_1,p_2,3\}$ is isomorphic to a Broom. This yields a contradiction again.

 Thus, $d_G(i,j)=1$ for some   $i\in \{1,2\}$ and $j\in\{3,4\}$. Say $d(1,3)=1$, or equivalently, $\{1,3\}$ is an edge of $G$. Then $\{2,4\}$ is not an edge of $G$. Let $2-p-4$ be a path. If $p\in\{1,3\}$, say $p=1$, then since the graph of $G$ induced on $\{1,2,3,4\}$ can not be a Broom, $\{2,3\}$ must be an edge of $G$. Thus $1-2-3-4-1$ is a $4$-cycle of $G$, a contradiction. Consequently, $p\notin \{1,3\}$

 Since the subgraph of $G$ induced on $\{1,2,3,4,p\}$ is not a pentagon,  at least one of $\{1,4\},\{2,3\}, \{1,p\},\{3,p\}$ is an edge of $G$. If $\{1,4\}$ is an edge, since the subgraph of $G$ induced on $\{2,1,3,4\}$ is not a Broom, we have $\{2,3\}\in E(G)$. This implies $1-2-3-4-1$ is a 4-cycle containing $\{1,2\},\{3,4\}$, a contradiction. Thus,  $\{1,4\}$ is not an edge. Similarly  $\{2,3\}\notin E(G)$. If $\{1,p\}\in E(G)$, then subgraph induced on $\{3, 1,2,p\}$ is isomorphic to a Boom, thus $\{1,p\}\notin E(G)$. At last, $\{3,p\}\notin E(G)$. Thus, in all cases, our assumption leads to a contradiction.
\end{proof}

\begin{Corollary} Let $G$ be a simple graph on $[r]$ with $r\geq 3$. Then the following statements are equivalent

{\em (1)} $G$ is a matroid;

{\em (2) $\mbox{diam}(G)\leq 2$}  and $G$ contains neither Broom nor Pentagon as its induced graphs.

\end{Corollary}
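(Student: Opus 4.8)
The plan is to read the corollary off from Proposition~\ref{obstruction} together with one elementary fact about matroids. Recall that by \cite[Corollary 2.6]{NT} a simple graph $G$ is a matroid precisely when every pair of disjoint edges lies in a common $4$-cycle, and that Proposition~\ref{obstruction} already establishes, \emph{under the hypothesis $\mathrm{diam}(G)\le 2$}, the equivalence ``$G$ is not a matroid $\iff$ $G$ has an induced Broom or Pentagon''. Hence the only ingredient not yet in hand is that a matroid automatically satisfies $\mathrm{diam}(G)\le 2$ (noted informally in the paragraph preceding Proposition~\ref{obstruction}); once that is proved, (1)$\iff$(2) becomes purely formal.

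For (1)$\Rightarrow$(2) I would first verify $\mathrm{diam}(G)\le 2$ directly from the disjoint-edges-in-a-$4$-cycle criterion: given non-adjacent vertices $p,q$ (the adjacent case being trivial), use that $G$ has no isolated vertex to pick edges $\{p,p'\}$ and $\{q,q'\}$; if $p'=q'$ then $p-p'-q$ has length $2$, while if the two edges are disjoint the $4$-cycle through both is supported on $\{p,p',q,q'\}$ and reading it around produces a length-$2$ path from $p$ to $q$ (the cases $p'=q$ or $q'=p$ contradict non-adjacency). With $\mathrm{diam}(G)\le 2$ in place, Proposition~\ref{obstruction} gives that $G$, being a matroid, has no induced Broom or Pentagon; alternatively one sees this concretely, since an induced Broom or Pentagon supplies disjoint edges $\{1,2\},\{3,4\}$ whose only possible common $4$-cycle would lie on the vertices $1,2,3,4$, and the induced subgraph there (the Broom itself, or the path $1-2-3-4$ coming from the Pentagon) contains no $4$-cycle. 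Conversely, (2)$\Rightarrow$(1) is immediate: assuming $\mathrm{diam}(G)\le 2$ and no induced Broom or Pentagon, Proposition~\ref{obstruction} forces $G$ to be a matroid.

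There is no real obstacle here; the proof is a short assembly. The one place that warrants care is the order of operations: Proposition~\ref{obstruction} is only applicable once $\mathrm{diam}(G)\le 2$ has been secured, so the short lemma ``matroid $\Rightarrow \mathrm{diam}\le 2$'' must genuinely be established first rather than tacitly assumed. It is also worth spelling out, when deriving (1)$\Rightarrow$(2), why an induced Broom or Pentagon obstructs the exchange property, namely the observation that a $4$-cycle through two disjoint edges is confined to the four vertices of those edges.
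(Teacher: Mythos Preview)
Your proposal is correct and follows exactly the route the paper intends: the corollary is stated without proof because it is meant to be read off from Proposition~\ref{obstruction} together with the remark preceding it that every matroid has diameter at most $2$. You have simply supplied the details the paper leaves implicit, and your verification of ``matroid $\Rightarrow \mathrm{diam}(G)\le 2$'' via the disjoint-edges-in-a-$4$-cycle criterion is the natural way to justify that remark.
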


\subsection{The computation of $a_2(S/I_G^n)$}
In this subsection we will prove that   $a_2(S/I_G^n)=a_2(S/I_G^{(n)})$ and give the formula of $a_2(S/I_G^n)$.

\begin{Proposition} \label{a21} Let $G$ be a simple graph on $[r]$ with $r\geq 3$. Then $$a_2^j(S/I_G^n)=a_2^j(S/I_G^{(n)})$$ for all $j\geq 0$ and $n\geq 1$.  In particular, $a_2(S/I_G^n))=a_2(S/I_G^{(n)})$ for $n\geq 1$.
\end{Proposition}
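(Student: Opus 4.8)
The plan is to prove the sharper statement that $\dim_K H^2_{\mathfrak{m}}(S/I_G^n)_{\mathbf{a}}=\dim_K H^2_{\mathfrak{m}}(S/I_G^{(n)})_{\mathbf{a}}$ for \emph{every} $\mathbf{a}\in\mathbb{Z}^r$, which at once gives both $a_2^j(S/I_G^n)=a_2^j(S/I_G^{(n)})$ for all $j\ge 0$ and $a_2(S/I_G^n)=a_2(S/I_G^{(n)})$. Since $\sqrt{I_G^n}=\sqrt{I_G^{(n)}}=I_G$, we have $\Delta(I_G^n)=\Delta(I_G^{(n)})=G$. By Lemma~\ref{Takayama}, both $H^2_{\mathfrak{m}}(S/I_G^n)$ and $H^2_{\mathfrak{m}}(S/I_G^{(n)})$ vanish in degree $\mathbf{a}$ whenever $G_{\mathbf{a}}\notin G$, and when $G_{\mathbf{a}}\in G$ (so that $|G_{\mathbf{a}}|\le 2$) the same lemma yields
$$\dim_K H^2_{\mathfrak{m}}(S/I)_{\mathbf{a}}=\dim_K\widetilde{H}_{1-|G_{\mathbf{a}}|}\bigl(\Delta_{\mathbf{a}}(I);K\bigr)\qquad\text{for }I\in\{I_G^n,I_G^{(n)}\}.$$
Thus it suffices to match these two reduced homology dimensions for all $\mathbf{a}$ with $G_{\mathbf{a}}\in G$.

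The mechanism is a single local identity: \emph{$I_G^n[W]=I_G^{(n)}[W]$ for every $W\subseteq[r]$ with $|W|\ge 2$.} Using $(IJ)[W]=I[W]J[W]$ and $(I\cap J)[W]=I[W]\cap J[W]$, the primary decomposition $I_G^{(n)}=\bigcap_{e\in E(G)}P_e^n$ gives $I_G^{(n)}[W]=\bigcap_{e\in E(G)}\bigl(P_e[W]\bigr)^n$, and it remains to evaluate $P_e[W]$: if $W\not\subseteq e$ then some variable of $W$ is sent to $1$, so $P_e[W]=K[x_i:i\notin W]$, while $P_e[e]=(x_i:i\notin e)$. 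Combined with $I_G[e]=(x_i:i\notin e)$ from Lemma~\ref{3.1}, this shows that if $W$ is an edge $e$ then $I_G^n[W]=I_G^{(n)}[W]=(x_i:i\notin e)^n$, and if $W$ is not an edge (in particular whenever $|W|\ge 3$) then $I_G^n[W]=I_G^{(n)}[W]=K[x_i:i\notin W]$. By Lemma~\ref{local} it follows that, for every $\mathbf{a}\in\mathbb{Z}^r$, the complexes $\Delta_{\mathbf{a}}(I_G^n)$ and $\Delta_{\mathbf{a}}(I_G^{(n)})$ have exactly the same faces $F$ satisfying $|F\cup G_{\mathbf{a}}|\ge 2$.

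To finish I would split on the size of $G_{\mathbf{a}}$, using that $\Delta(I_G)=G$ is one-dimensional, so every face $F$ of either complex has $|F\cup G_{\mathbf{a}}|\le 2$, whence both complexes have dimension $\le 1-|G_{\mathbf{a}}|$. If $|G_{\mathbf{a}}|=2$, both complexes lie in $\{\emptyset\}$ and---since $|\emptyset\cup G_{\mathbf{a}}|=2$---the previous paragraph forces them to coincide, so $\widetilde{H}_{-1}$ agrees. If $|G_{\mathbf{a}}|=1$, both complexes are zero-dimensional with the same vertex set $V$ (a vertex $v$ gives $|\{v\}\cup G_{\mathbf{a}}|=2$), so $\dim_K\widetilde{H}_0=\max\{0,\,|V|-1\}$ for both. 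If $|G_{\mathbf{a}}|=0$, i.e. $\mathbf{a}\in\mathbb{N}^r$, both complexes are at most one-dimensional with the same set of edges, and $\widetilde{H}_1$ of a complex of dimension $\le 1$ is unaffected by adding or deleting isolated vertices (Lemma~\ref{sh1}.(4))---equivalently, $\widetilde{H}_1(\Delta)=\widetilde{H}_1(\Delta(1))$ and the pure $1$-skeletons $\Delta_{\mathbf{a}}(I_G^{(n)})=\Delta_{\mathbf{a}}(I_G^n)(1)$ coincide---so the groups $\widetilde{H}_1$ agree. This covers all cases.

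The point needing care---and the reason the analogous equality fails for $a_1$---is that $\Delta_{\mathbf{a}}(I_G^n)$ and $\Delta_{\mathbf{a}}(I_G^{(n)})$ genuinely differ in general: at the empty face, and, when $G_{\mathbf{a}}=\emptyset$, at their isolated vertices. What makes the argument go through is precisely that the one homological degree $\widetilde{H}_{1-|G_{\mathbf{a}}|}$ feeding $H^2_{\mathfrak{m}}$ is insensitive to exactly those discrepancies, whereas the degrees feeding $H^1_{\mathfrak{m}}$ are not. I expect the only genuine computation to be the local identity $I_G^n[W]=I_G^{(n)}[W]$ for $|W|\ge 2$---routine but the crux---while the remainder is bookkeeping with Takayama's formula and with reduced homology of graphs.
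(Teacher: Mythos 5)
Your argument is correct and follows essentially the same route as the paper: compare graded pieces of $H^2_{\mathfrak{m}}$ via Takayama's formula and split on $|G_{\mathbf{a}}|\in\{0,1,2\}$, the point in each case being that the homological degree $1-|G_{\mathbf{a}}|$ only sees faces $F$ with $|F\cup G_{\mathbf{a}}|\geq 2$, on which $\Delta_{\mathbf{a}}(I_G^n)$ and $\Delta_{\mathbf{a}}(I_G^{(n)})$ agree. The only real difference is that you prove this agreement self-containedly through the local identity $I_G^n[W]=I_G^{(n)}[W]$ for $|W|\geq 2$ (from the primary decomposition and monomial localization), whereas the paper quotes the pure-skeleton equality $\Delta_{\mathbf{a}}(I_G^n)(1)=\Delta_{\mathbf{a}}(I_G^{(n)})$ from \cite{MT}; as a bonus your version gives the slightly sharper conclusion that all graded pieces of $H^2_{\mathfrak{m}}$ coincide.
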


\begin{proof} We have known that $\Delta_{\mathbf{a}}(I_G^n)(1)=\Delta_{\mathbf{a}}(I_G^{(n)})$ by Lemma~\ref{edge}. Let $T\in \{I_G^n, I_G^{(n)}\}$.
     If $\mathbf{a}$ is a vector in $\mathbb{Z}^r$ with $|G_{\mathbf{a}}|=2$, then
$$H_{\mathfrak{m}}^2(S/T)_{\mathbf{a}}\neq 0\Longleftrightarrow\Delta_{\mathbf{a}}(T)=\{\emptyset\}\Longleftrightarrow G_{\mathbf{a}}\in \Delta_{\mathbf{a}_{+}}(T).$$
Here, the first equivalence follows from Lemma~\ref{lu} as well as Lemma~\ref{sh1}.(1), and the second one is from Lemma~\ref{transfer}.
On the other hand, we have $G_{\mathbf{a}}\in \Delta_{\mathbf{a}_{+}}(I_G^n)$ if and only if $G_{\mathbf{a}}\in \Delta_{\mathbf{a}_{+}}(I_G^{(n)})$ by Lemma~\ref{edge}. This proves $a_2^2(S/I_G^n)=a_2^2(S/I_G^{(n)})$.

Let  $\mathbf{a}$ be a vector in $\mathbb{Z}^r$ with $|G_{\mathbf{a}}|=1$. Say $G_{\mathbf{a}}=\{p\}$. Then
$H_{\mathfrak{m}}^2(S/T)_{\mathbf{a}}\neq 0$ if and only if $\Delta_{\mathbf{a}}(T)$ is disconnected. This is equivalent to requiring that there exist $i\neq j\in [r]\setminus \{p\}$ such that  $\{p,i\}$ and $\{p,j\}$ belong to $ \Delta_{\mathbf{a}_+}(T)$ by Lemma~\ref{transfer}.
Since  $\{p,k\}\in \Delta_{\mathbf{a}_{+}}(I_G^n)$ if and only if $\{p,k\}\in \Delta_{\mathbf{a}_{+}}(I_G^{(n)})$ for $k=i,j$ by Lemma~\ref{edge}, the equality  $a_2^1(S/I_G^n)=a_2^1(S/I_G^{(n)})$ follows.

Let  $\mathbf{a}\in \mathbb{N}^r$.  Then
$H_{\mathfrak{m}}^2(S/T)_{\mathbf{a}}\neq 0\Longleftrightarrow\Delta_{\mathbf{a}}(T) \mbox{ contains a cycle  }$ by Lemma~\ref{sh2}.
It is clear that  $\Delta_{\mathbf{a}}(I_G^n)$  contains a cycle  if and only if $\Delta_{\mathbf{a}}(I_G^{(n)})$ contains a cycle. From this it follows that $a_2^0(S/I_G^n)=a_2^0(S/I_G^{(n)})$.
Note that $a_2^j(S/I_G^n)=a_2^j(I_G^{(n)})=-\infty$ for all $j\geq 3$, our proof completes.
\end{proof}

We can obtain the following formula on $a_2(S/I_G^n)$ immediately by combining Proposition~\ref{a21} with \cite[Theorem 2.8]{HT2}.

\begin{Proposition} \label{a2} Let $G$ be a simple graph on vertex set $[r]$ with $r\geq 3$. Then for all $n\geq 1$,
{\em  $$a_2(S/I_G^n)=\left\{
 \begin{array}{ll}
   3n-3, & \hbox{$\mbox{girth}(G)=3$;} \\
   2n-2, & \hbox{$\mbox{girth}(G)=4$;} \\
   2n-3, & \hbox{$5\leq \mbox{girth}(G)< \infty$  and $n\geq 2$;} \\
    0, &   \hbox{$5\leq \mbox{girth}(G)< \infty$ and $n=1$;}\\
  2n-3, & \hbox{$\mbox{girth}(G)=\infty$ and $\deg (G)\geq  2$;} \\
   n-3, & \hbox{$\mbox{girth}(G)=\infty$ and $\deg(G)=1$.}
                                                                                                                 \end{array}
                                                                                                               \right.
$$}
\end{Proposition}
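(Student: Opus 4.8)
The plan is to read the formula off from the corresponding statement for symbolic powers. By Proposition~\ref{a21} we have $a_2(S/I_G^n)=a_2(S/I_G^{(n)})$ for every $n\geq 1$, so the asserted formula is equivalent to the same formula with $I_G^n$ replaced by the symbolic power $I_G^{(n)}$, and this is precisely \cite[Theorem 2.8]{HT2}. It then only remains to match the case divisions: the alternatives $\mathrm{girth}(G)=3$, $\mathrm{girth}(G)=4$, $5\leq\mathrm{girth}(G)<\infty$ and $\mathrm{girth}(G)=\infty$ transcribe verbatim, and in the forest case the further split into $\deg(G)\geq 2$ versus $\deg(G)=1$, together with the exceptional value at $n=1$ for girth $\geq 5$, already appears in \emph{loc.\ cit.}\ If \cite[Theorem 2.8]{HT2} is stated in terms of $\reg(S/I_G^{(n)})$ (or of the regularity of the pure skeleta) rather than of $a_2$ directly, one inserts a short bookkeeping step: $H^0_{\mathfrak m}(S/I_G^{(n)})=0$ because $I_G^{(n)}=\bigcap_{e\in E(G)}P_e^n$ is unmixed, so $\reg(S/I_G^{(n)})=\max\{a_1(S/I_G^{(n)})+1,\,a_2(S/I_G^{(n)})+2\}$, and one extracts $a_2$ using the value of $a_1$ recorded in that reference.

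The substantive content is carried entirely by Proposition~\ref{a21} — which matches, degree by degree, the nonvanishing of $H^2_{\mathfrak m}$ of the two families of powers through the combinatorics of $\Delta_{\mathbf a}(I_G^n)$ versus $\Delta_{\mathbf a}(I_G^{(n)})$ — and by \cite[Theorem 2.8]{HT2}, so there is no separate obstacle here. For a self-contained derivation one would instead compute $a_2^j(S/I_G^{(n)})$ for $j=0,1,2$ directly, via Lemmas~\ref{lu}, \ref{sh1}, \ref{transfer} and Proposition~\ref{edge}. For $j=2$ one finds $a_2^2(S/I_G^n)=n-3$: put weight $n-1$ on a single vertex and $-1$ on the two endpoints of an edge, and Proposition~\ref{edge} shows nothing larger is possible. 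For $j=1$ one finds $a_2^1(S/I_G^n)=2n-3$ when $\deg(G)\geq 2$ and $-\infty$ otherwise: put $-1$ on a vertex $p$ of degree $\geq 2$ and $n-1$ on two of its neighbours, forcing $\mathrm{Link}_{\Delta_{\mathbf a_+}(I_G^{(n)})}(\{p\})$ to be disconnected.

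The genuinely combinatorial piece is $j=0$. By Lemma~\ref{sh2}, for $\mathbf a\in\mathbb N^r$ one has $H^2_{\mathfrak m}(S/I_G^{(n)})_{\mathbf a}\neq 0$ exactly when $\Delta_{\mathbf a}(I_G^{(n)})$ contains a cycle, which by Proposition~\ref{edge} means there is a cycle $C$ of $G$ with $\sum_{i\notin\{p,q\}}a_i\leq n-1$ for every edge $\{p,q\}$ of $C$. Off-cycle weight only tightens every such constraint, and passing to a longer cycle only weakens the bound, so the optimum is attained with all weight on a shortest cycle; summing the constraints over its $g=\mathrm{girth}(G)$ edges gives $|\mathbf a|\leq\lfloor g(n-1)/(g-2)\rfloor$, which equals $3n-3$ for $g=3$ and $2n-2$ for $g=4$ (both attained, by $(n-1,n-1,n-1)$ on a triangle and by $(n-1,0,n-1,0)$ on a square), and is $\leq 2n-3$ for $g\geq 5$ and $n\geq 2$ (since then $(n-2)(g-3)\geq 0$). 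Taking the maximum of $a_2^0$, $a_2^1$, $a_2^2$ then yields each line of the formula, the small cases $n=1$ and $\mathrm{girth}(G)=\infty$ included. The one delicate point in this alternative route is the extremal estimate for $a_2^0$ when $\mathrm{girth}(G)\geq 5$ and $n$ is small, where the integrality of $\mathbf a$ is essential; that is exactly the computation performed in \cite[Theorem 2.8]{HT2}, which we therefore simply invoke.
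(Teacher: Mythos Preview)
Your proof is correct and follows the paper's approach exactly: the paper simply says the formula follows immediately by combining Proposition~\ref{a21} with \cite[Theorem~2.8]{HT2}. Your additional sketch of a self-contained computation of $a_2^j$ for $j=0,1,2$ is extra material that the paper does not include, but it is consistent with the surrounding lemmas and does not affect the correctness of the main argument.
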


\section{Conclusions}

If $I$ is a nonzero two-dimensional squarefree monomial ideal of $S=K[x_1,\ldots,x_r]$  containing no variables, then $r\geq 3$ and $I=I_G$,
where $G$ is a simple graph  on  $[r]$ with $E(G)\neq \emptyset$ that may contain isolated vertices.
We now can state and prove  the main result of this paper.

\begin{Theorem} \label{4.1} Let $r\geq 3$ and let $G$ be a simple graph on vertex set $[r]$ with $E(G)\neq \emptyset$ that may contain some isolated vertices. Then, for all $n\geq 1$, we have
{\em  $$ \qquad \mbox{g-reg} (S/I_G^{n})= \left\{
 \begin{array}{ll}
   3n-1, & \hbox{$\mbox{girth}(G)=3$;} \\
   2n, & \hbox{$\mbox{girth}(G)=4$;} \\
   2n-1, & \hbox{$5\leq \mbox{girth}(G)< \infty$ and $n\geq 2$;} \\
   2, & \hbox{$5\leq \mbox{girth}(G)< \infty$ and $n=1$;} \\
  2n-1, & \hbox{$\mbox{girth}(G)=\infty$ and $\deg (G)\geq  2$;} \\
     2n-1, & \hbox{$\deg(G)=1$.}
                                                                                                                 \end{array}
                                                                                                               \right.$$}
\end{Theorem}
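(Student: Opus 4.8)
The plan is to read the answer off from Sections~2 and~3: first settle the graphs with no isolated vertex using the computations of Section~3, and then reduce an arbitrary $G$ to that situation by re-attaching the isolated vertices one at a time via Theorem~\ref{2.12}. Since $\sqrt{I_G^{\,n}}=I_G$, we have $\dim S/I_G^{n}=\dim G+1=2$ for all $n\geq 1$, so $H_{\mathfrak{m}}^{i}(S/I_G^{n})=0$ for $i\geq 3$ and hence $\mbox{g-reg}(S/I_G^{n})=\max\{a_1(S/I_G^{n})+1,\ a_2(S/I_G^{n})+2\}$. Suppose first that $G$ has no isolated vertex, so $r\geq 3$. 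Then $a_1(S/I_G^{n})$ is given by Corollary~\ref{n=1} when $n=1$ and by Proposition~\ref{main3.1} when $n\geq 2$, while $a_2(S/I_G^{n})$ is given by Proposition~\ref{a2} for all $n\geq 1$. Forming $\max\{a_1+1,\,a_2+2\}$ in each of the cases listed there, re-organised according to $\mbox{girth}(G)$ and $\deg(G)$, reproduces exactly the six entries of the table: when $\deg(G)\geq 2$ the term $a_2+2$ is always at least as large as $a_1+1$, whereas if $\deg(G)=1$ then $G$ is a disjoint union of at least two edges, of diameter $\infty$, and $a_1+1=2n-1$ is the larger. I would also record separately that $\mbox{g-reg}(K[x_1,x_2])=0$; the one-edge graph on two vertices is not itself an instance of the theorem, but this value is needed below.

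Now let $G$ have at least one isolated vertex. Relabel so that $y:=x_r$ is isolated and let $G'$ be the induced subgraph on $[r-1]$; then $I_G=(I_{G'},x_1y,\dots,x_{r-1}y)$ is precisely the ideal $J$ of $R=S[y]$ attached in Section~2 to $I:=I_{G'}$. As $I_{G'}$ contains no variable, $I_{G'}[j]\neq S[j]$ for every $j$, so Theorem~\ref{2.12} applies. Combining part~(2) (for $a_1$) with part~(1) taken at $i=2$ (for $a_2$), adding $1$ resp.\ $2$, and using $\dim S/I_{G'}^{\,m}=2$ to recognise $\max\{a_1(S/I_{G'}^{\,m})+1,\,a_2(S/I_{G'}^{\,m})+2\}$ as $\mbox{g-reg}(S/I_{G'}^{\,m})$, one obtains the recursion
\[
\mbox{g-reg}(S/I_G^{n})=\max\left(\{2n-1\}\cup\{\,\mbox{g-reg}(S/I_{G'}^{\,n-t})+t:\ 0\leq t\leq n-1\,\}\right).
\]

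Since $G'$ has one fewer isolated vertex than $G$, and the same girth and maximal degree, I would induct on the number of isolated vertices. The base cases are: $G'$ with none, settled in the first paragraph when $r-1\geq 3$; and $G'$ the one-edge graph on two vertices, which occurs only when $G$ has one edge and one isolated vertex ($r=3$), in which case $I_{G'}=0$ and feeding $\mbox{g-reg}(K[x_1,x_2])=0$ into the recursion gives $\max(\{2n-1\}\cup\{t:0\leq t\leq n-1\})=2n-1$, matching the $\deg(G)=1$ row. The inductive step then reduces to verifying that the function $f(n)$ defined by the table of Theorem~\ref{4.1} — which depends only on $\mbox{girth}(G)$ and $\deg(G)$, hence is the same for $G$ and $G'$ — satisfies $f(n)=\max(\{2n-1\}\cup\{f(n-t)+t:0\leq t\leq n-1\})$. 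This is a one-line check in each of the six cases: the inner maximum is attained at $t=0$ and equals $f(n)$, and beyond that one only needs $2n-1\leq f(n)$ and (when $5\leq\mbox{girth}(G)<\infty$ and $n\geq 2$) $f(1)+(n-1)=n+1\leq 2n-1$, both obvious.

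The argument is essentially bookkeeping once Theorem~\ref{2.12}, Proposition~\ref{main3.1} and Proposition~\ref{a2} are in hand, so the only real obstacle is to run the case analysis without slips; in particular one must keep track of the small values $n=1$, where $a_1$ is governed by Corollary~\ref{n=1} rather than Proposition~\ref{main3.1}, and of the degenerate graph with one edge and one isolated vertex, whose non-isolated part is the single edge $P_1$ with $I_{P_1}=0$, which is outside the reach of Section~3 and must be handled through the recursion instead.
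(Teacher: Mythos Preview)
Your proof is correct and uses the same ingredients as the paper (Theorem~\ref{2.12}, Propositions~\ref{main3.1} and~\ref{a2}, Corollary~\ref{n=1}), but the organization differs: the paper treats each girth case directly and invokes Theorem~\ref{2.12} inside each case to bound $a_1$ and $a_2$ when isolated vertices are present, whereas you first dispose of all isolated-vertex-free graphs and then package Theorem~\ref{2.12} into the single recursion $\mbox{g-reg}(S/I_G^{n})=\max\bigl(\{2n-1\}\cup\{\mbox{g-reg}(S/I_{G'}^{\,n-t})+t\}\bigr)$, which you verify against the table by induction on the number of isolated vertices. This is a cleaner bookkeeping device than the paper's case-by-case treatment, and your explicit handling of the degenerate base case $G'=P_1$ (where $I_{G'}=0$ and Section~3 does not apply) is a point the paper passes over more quickly; but the underlying argument is the same.
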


\begin{proof} In the process of our proof, Theorem~\ref{2.12} and Propositions~\ref{main3.1},~\ref{a2} will be used many times  without referring to them each time.

Note that $\mbox{g-reg} (S/I_G^{n})=\max\{a_i(S/I_G^n)+i:i=1,2\}$.
If $\mbox{girth}(G)=3$ then $a_1(S/I_G^n)\leq 3n-3$ for all $n\geq 1$ by Proposition~\ref{main3.1}, Corollary~\ref{n=1} and  Theorem~\ref{2.12} and it follows that $\mbox{g-reg}(S/I_G^n)=3n-1$ by Proposition~\ref{a2} and Theorem~\ref{2.12}.  If $\mbox{girth}(G)=4$ then $G$
contains no compact vertices. From this it follows that $a_1(R/I_G^n)\leq 2n-1$ by Propositions~\ref{main3.1} and so $\mbox{g-reg}(S/I_G^n)=2n$ by Proposition~\ref{a2}.

  Assume that $5\leq \mbox{girth}(G)<\infty$. If $n\geq 2$, then, since $G$ does not satisfy $\mathfrak{C}_2$, we have  $a_1(S/I_G^n)\leq 2n-2$. This then implies  $\mbox{g-reg}(S/I_G^n)=2n-1$; If $n=1$ then $a_1(S/I_G)\leq 0$ by Corollary~\ref{n=1} and so $\mbox{g-reg}(S/I_G^n)=2$.

  If $\mbox{girth}(G)=\infty$ and $\deg(G)\geq 2$, we also have  $a_1(S/I_G^n)\leq 2n-2$ and so $\mbox{g-reg}(S/I_G^n)=2n-1$.

Now consider the case when  $\deg(G)=1$. Since $r\geq 3$, either $G$ contains at least two disjoint edges or $G$ contains at least  an edge and an isolated vertex. In the former case, we have  $\mbox{diam}(G)=\infty\geq 3$. From this it follows that $a_1(S/I_G^n)=2n-2$   and so $\mbox{g-reg}(S/I_G^n)=2n-1$. In the latter case, we have $a_1(S/I_G^n)=2n-2$ by Theorem~\ref{2.12}. This also implies $\mbox{g-reg}(S/I_G^n)=2n-1$.
\end{proof}

One special case of Theorem~\ref{4.1} can be achieved  directly: If $G$ is a triangle, then $I_G=(x_1x_2x_3)$ and so $I_G^n$ has a linear resolution for all $n\geq 1$. Since $S/I_G^n$ has positive depth, we have  $$\mbox{g-reg}(S/I_G^n)=\reg(S/I_G^n)=\reg(I_G^n)-1=3n-1.$$

The following result follows immediately by  comparing Theorem~\ref{4.1}   with \cite[Theorem 2.9]{HT2}.

\begin{Corollary} \label{L} Let $G$ be a simple graph on vertex set $[r]$ with $r\geq 3$ that may contain some isolated vertices. If $G$ contains at least one edge, then, for all $n\geq 1$, we have {\em  $$\mbox{g-reg}(S/I_G^n)=\reg(S/I_G^{(n)}).$$}
\end{Corollary}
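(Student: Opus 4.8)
The plan is to confront the explicit values of $\mbox{g-reg}(S/I_G^n)$ just obtained in Theorem~\ref{4.1} with the explicit values of $\reg(S/I_G^{(n)})$ recorded in \cite[Theorem 2.9]{HT2}. I would first note that every associated prime of $I_G^{(n)}$ is one of the monomial primes $P_e$ with $e\in E(G)$, and these have height $r-2<r$; hence $\mathfrak m$ is not associated, so $H_{\mathfrak m}^0(S/I_G^{(n)})=0$ and therefore $\reg(S/I_G^{(n)})=\mbox{g-reg}(S/I_G^{(n)})$. Thus both sides of the asserted equality are of the form $\max\{a_1+1,a_2+2\}$, and it suffices to compare the two lists of values. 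I would also record that \cite[Theorem 2.9]{HT2} is stated for arbitrary two-dimensional squarefree monomial ideals, hence for simple graphs possibly carrying isolated vertices, so no separate treatment of isolated points is needed; alternatively, one may observe that deleting the isolated vertices of $G$ changes neither side, since the right-hand side of Theorem~\ref{4.1} depends only on $\mbox{girth}(G)$ and on whether $\deg(G)\le 1$, both insensitive to isolated vertices, and likewise for the symbolic side.

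Next I would run through the case division of Theorem~\ref{4.1}, which is organised by $\mbox{girth}(G)$ and $\deg(G)$: $\mbox{girth}(G)=3$; $\mbox{girth}(G)=4$; $5\le\mbox{girth}(G)<\infty$ with the sub-cases $n\ge 2$ and $n=1$; $\mbox{girth}(G)=\infty$ with $\deg(G)\ge 2$; and $\deg(G)=1$ (which forces $\mbox{girth}(G)=\infty$, and since $E(G)\neq\emptyset$ and $r\ge 3$ means $G$ is a disjoint union of at least one edge together with a further edge or an isolated vertex). In each regime one reads off from Theorem~\ref{4.1} the values $3n-1$, $2n$, $2n-1$ (resp.\ $2$ when $n=1$), $2n-1$, and $2n-1$, and one checks that \cite[Theorem 2.9]{HT2} returns exactly the same value in the corresponding case. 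In particular a forest containing an edge — which is precisely the $\mbox{girth}(G)=\infty$ situation — gives $2n-1$ on the symbolic side regardless of whether $\deg(G)$ equals one or is at least two, so the last two lines of Theorem~\ref{4.1} are covered uniformly.

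The only delicate point, and the step I expect to take the most attention, is verifying that the boundary and degenerate instances line up: $n=1$ in every regime, $G$ a triangle (where $I_G$ is principal), $G\in\{P_2,C_4,C_5\}$, and the maximum-degree-one forests. For these one should check the generic formulas against the small-$n$ clauses of both theorems rather than apply them blindly. Once this bookkeeping is done, the equality $\mbox{g-reg}(S/I_G^n)=\reg(S/I_G^{(n)})$ holds term by term for all $n\ge 1$, which is the assertion.
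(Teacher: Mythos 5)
Your proposal is correct and takes exactly the paper's route: the paper proves this corollary simply by comparing the formula of Theorem~\ref{4.1} with the formula for $\reg(S/I_G^{(n)})$ in \cite[Theorem 2.9]{HT2}, which is precisely the term-by-term check you describe. Your auxiliary observations (that $\mathfrak{m}$ is not an associated prime of $I_G^{(n)}$, and the aside about deleting isolated vertices) are harmless but not needed for this comparison.
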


In general, a two-dimensional squarefree monomial ideal may contain some variables. More precisely, if $I$ is a two-dimensional squarefree monomial ideal which is not generated by variables, then
$I=(I_G,y_1,\ldots,y_k)$, where $k\geq 0$ and  $G$ is again  a simple graph  on vertex set $[r]$ with  $r\geq 3$ and  $E(G)\neq \emptyset$ that may contain isolated vertices.
  We now consider this case. It is not difficult to see that $$S[y]/(I_G,y)^n=S/I_G^n\oplus (S/I_G^{n-1})y\oplus \cdots \oplus (S/I_G)y^{n-1}.$$  By  \cite[Theorem 3.4]{HNTT}, we have
$$S[y]/(I_G,y)^{(n)}=S/I_G^{(n)}\oplus (S/I_G^{(n-1)})y\oplus \cdots \oplus (S/I_G)y^{n-1}.$$ From these it follows that $$\mbox{g-reg}(S[y]/(I_G,y)^n)=\max\{\mbox{g-reg}(S/I_G^i)+n-i:1\leq i\leq n\}$$ and
$$\mbox{reg}(S[y]/(I_G,y)^{(n)})=\max\{\mbox{reg}(S/I_G^{(i)})+n-i:1\leq i\leq n\}.$$
Combining  these equalities with  Corollary~\ref{L},  we get the last result of this paper.

\begin{Corollary} Let $I$ be a two-dimensional squarefree monomial ideal. Then, for all $n\geq 1$, we have {\em  $$\mbox{g-reg}(S/I^n)=\reg(S/I^{(n)}).$$}
\end{Corollary}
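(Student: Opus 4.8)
The plan is to reduce the general case to the case of ideals coming from graphs, which has already been settled in Corollary~\ref{L}. Write $I$ as a two-dimensional squarefree monomial ideal of $S=K[x_1,\ldots,x_r]$. If $I$ is generated entirely by variables, then $S/I$ is a polynomial ring, all powers $I^n$ are again generated by variables, $I^n=I^{(n)}$, and both sides equal $-\infty$, so there is nothing to prove; hence I may assume $I$ is not generated solely by variables. In that case one has the decomposition $I=(I_G,y_1,\ldots,y_k)$ with $k\geq 0$, where $G$ is a simple graph on vertex set $[r']$ with $r'\geq 3$ and $E(G)\neq\emptyset$ (possibly with isolated vertices), and the $y_j$ are variables disjoint from those of $S'=K[x_1,\ldots,x_{r'}]$ underlying $I_G$. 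If $k=0$ the statement is exactly Corollary~\ref{L}, so the remaining work is to handle the extra variables $y_1,\ldots,y_k$, and by an obvious induction it suffices to treat the case $k=1$, i.e.\ to pass from $S'/I_G$ to $S'[y]/(I_G,y)$.

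The key step is the pair of direct-sum decompositions recorded just before the statement: since adjoining the variable $y$ and taking the $n$-th power splits $(I_G,y)^n$ into the pieces $I_G^{n-i}y^i$ ($0\le i\le n-1$) together with $(y^n)$, one obtains as graded modules
$$
S[y]/(I_G,y)^n \;=\; \bigoplus_{i=0}^{n-1}\bigl(S/I_G^{\,n-i}\bigr)y^i,
$$
and, using \cite[Theorem 3.4]{HNTT} for the behaviour of symbolic powers under adding a variable,
$$
S[y]/(I_G,y)^{(n)} \;=\; \bigoplus_{i=0}^{n-1}\bigl(S/I_G^{(n-i)}\bigr)y^i.
$$
Regularity and geometric regularity of a finite direct sum are the maxima of those of the summands, and a shift by $y^i$ adds $i$; therefore
$$
\mbox{g-reg}\bigl(S[y]/(I_G,y)^n\bigr)=\max_{1\le j\le n}\bigl\{\mbox{g-reg}(S/I_G^{\,j})+n-j\bigr\},\qquad
\reg\bigl(S[y]/(I_G,y)^{(n)}\bigr)=\max_{1\le j\le n}\bigl\{\reg(S/I_G^{(j)})+n-j\bigr\}.
$$
Now Corollary~\ref{L} gives $\mbox{g-reg}(S/I_G^{\,j})=\reg(S/I_G^{(j)})$ for every $j\ge 1$, so the two maxima agree term by term, and hence $\mbox{g-reg}(S[y]/(I_G,y)^n)=\reg(S[y]/(I_G,y)^{(n)})$. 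Iterating over the variables $y_1,\ldots,y_k$ yields $\mbox{g-reg}(S/I^n)=\reg(S/I^{(n)})$ in general, completing the proof.

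The only genuine subtlety — and thus the step to carry out carefully rather than wave at — is the validity of the direct-sum decompositions at the module level and the interchange of g-reg/reg with finite direct sums and degree shifts. The ordinary-power decomposition is elementary monomial bookkeeping (each generator of $(I_G,y)^n$ is a product of $n$ generators, some from $I_G$ and some equal to $y$, so sorting by the $y$-exponent gives the stated splitting), and the compatibility $H^0_{\mathfrak m}$ and all higher local cohomology of a direct sum being the direct sum of the pieces (which is what both $\reg$ and $\mbox{g-reg}$ see) is standard; the symbolic case is exactly the content of \cite[Theorem 3.4]{HNTT}. Everything else is formal, so I do not expect any real obstacle beyond recording these identifications cleanly.
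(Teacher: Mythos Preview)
Your proposal is essentially identical to the paper's own argument: the paper also uses the direct-sum decompositions $S[y]/(I_G,y)^n=\bigoplus_{i=0}^{n-1}(S/I_G^{n-i})y^i$ and, via \cite[Theorem 3.4]{HNTT}, $S[y]/(I_G,y)^{(n)}=\bigoplus_{i=0}^{n-1}(S/I_G^{(n-i)})y^i$, then takes the maximum over summands and invokes Corollary~\ref{L} termwise.

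One small slip to fix: in your opening case, if $I$ is generated by variables (so $I=(x_1,\ldots,x_{r-2})$ with $\dim S/I=2$), the claims ``all powers $I^n$ are again generated by variables'' and ``both sides equal $-\infty$'' are false for $n\geq 2$. The correct reason this case is trivial is that $I$ is prime, hence $I^{(n)}=I^n$, and $S/I^n$ is Cohen--Macaulay of dimension $2$ (it is a polynomial ring in two variables tensored with an Artinian ring), so $\mbox{g-reg}(S/I^n)=\reg(S/I^n)=\reg(S/I^{(n)})$. This does not affect the main argument.
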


In view of  this result, one may guess the equality $I^{(n)}=I^n:\mathfrak{m}^{\infty}$  holds for $n\geq 1$ if $I$ is a  two-dimensional squarefree monomial ideal. But this is not the case since $a_1(S/I_G^n)\neq a_1(S/I_G^{(n)})$ in general.

In order to obtain a formula for    $\reg(S/I_G^n)$,  we have to obtain the value for  $a_0(S/I_G^n)$, which  seems much more difficult. As a first step to compute $\reg(R/I_G^n)$, it would be of interest to  compute  the depth function $\mbox{depth}( S/I_G^n)$.

\vspace{2mm}

\noindent{\bf Acknowledgement}: We would like to thank the referees  for their valuable comments and advices, which  helped to improve the presentation of this manuscript. This project is supported by  NSFC (No. 11971338)

\end{document}